\documentclass[a4paper,10pt, reqno]{amsart}

\usepackage{amsmath,amssymb,latexsym, amsfonts}
\usepackage{verbatim}
\usepackage{times}
\usepackage{mathbbol}
\usepackage{hyperref}
\usepackage{mathabx}
\usepackage{bbm}

\usepackage{tikz}

\usepackage{stmaryrd}

\usepackage{capt-of}

\newcommand{\compactlist}[1]{\setlength{\itemsep}{0pt} \setlength{\parskip}{0pt} \setlength{\leftskip}{-0.#1em}}

\newcommand{\lie}{\mathcal{L}}
\newcommand{\BB}{B}
\newcommand{\SSS}{{S}}
\newcommand{\bb}{{b}}

\newcommand{\dd}{{d}}
\newcommand{\sss}{{s}}
\newcommand{\ttt}{{t}}

%
%

\numberwithin{equation}{section}

%
%

\theoremstyle{plain}

\newtheorem{theorem}{Theorem}[section]

\newtheorem{prop}[theorem]{Proposition}

\newtheorem{lem}[theorem]{Lemma}
\newtheorem{corollary}[theorem]{Corollary}

\theoremstyle{definition}
\newtheorem{definition}[theorem]{Definition}
\newtheorem{example}[theorem]{Example}

\newtheorem{rem}[theorem]{Remark}

%
%

%
%

\newcommand{\ahha}{{\scriptscriptstyle{A}}}

\newcommand{\emme}{{\scriptscriptstyle{M}}}
\newcommand{\enne}{{\scriptscriptstyle{N}}}

\newcommand{\uhhu}{{\scriptscriptstyle{U}}}

\newcommand{\vauu}{{\scriptscriptstyle{V}}}
\newcommand{\ikks}{{\scriptscriptstyle{X}}}

%
%

%
%

%
%

\newcommand{\gb}{\beta}  

\newcommand{\gd}{\delta} 
\newcommand{\gD}{\Delta} 
\newcommand{\gve}{\varepsilon} 
  
\newcommand{\gvf}{\varphi}

\newcommand{\gO}{\Omega}

\newcommand{\gs}{\sigma} 
\newcommand{\gS}{\Sigma}
 
\newcommand{\gvt}{\vartheta} 

%
%

\newcommand{\cM}{{\mathcal M}}

\newcommand{\cO}{{\mathcal O}}

%
%

\newcommand{\Hom}{\operatorname{Hom}}

\newcommand{\Tor}{{\rm Tor}}
\newcommand{\Ext}{{\rm Ext}}

\newcommand{\id}{{\rm id}}
\newcommand{\im}{{\rm im}\,}



%
%


%
%

\newcommand{\due}[3]{{}_{{#2 }} {#1}_{{ #3}}\,}    


\newcommand{\pl}{\partial}


\newcommand{\rmref}[1]{{\rm (}\ref{#1}{\rm )}}

\newcommand{{\Hl}}{{H^{\ell}}} 
\newcommand{{\mHop}}{{m_{H^{\rm op}}}} 
\newcommand{{\Hop}}{{H^{\rm op}}} 
\newcommand{{\mUop}}{{m_{U^{\rm op}}}} 
\newcommand{{\mUopp}}{{m_{\scriptscriptstyle{U^{\rm op}}}}} 
\newcommand{{\Uop}}{{U^{\rm op}}}
\newcommand{{\mVop}}{{m_{V^{\rm op}}}} 
\newcommand{{\Vop}}{{V^{\rm op}}}  
\newcommand{{\Ae}}{{A^{\rm e}}}
\newcommand{{\Ue}}{{U^{\rm e}}}
\newcommand{{\He}}{{H^{\rm e}}}
\newcommand{{\Aop}}{{A^{\rm op}}}
\newcommand{{\Aope}}{({A^{\rm op}})^{\rm e}}
\newcommand{{\Aopl}}{{A^{\rm op}_\pl}}

\newcommand{{\Bop}}{{B^{\rm op}}}
\newcommand{{\Bope}}{({B^{\rm op}})^{\rm e}}
\newcommand{{\Bpl}}{{B_\pl}}

\newcommand{{\op}}{{{\rm op}}}
\newcommand{{\coop}}{{{\rm coop}}}
\newcommand{{\sop}}{{*^{\rm op}}}

\newcommand{\amoda}{A^{\rm e}\mbox{-}\mathbf{Mod}}                  %
\newcommand{\moda}{A^\mathrm{op}\mbox{-}\mathbf{Mod}}         %
\newcommand{\umod}{U\mbox{-}\mathbf{Mod}}                     
\newcommand{\modu}{U^\mathrm{op}\mbox{-}\mathbf{Mod}}         %
\newcommand{\yd}{{}^\uhhu_\uhhu\mathbf{YD}}                     
\newcommand{\ayd}{{}^\uhhu\mathbf{aYD}_\uhhu}                     

\newcommand{\lact}{\smalltriangleright}                  
\newcommand{\ract}{\smalltriangleleft}
\newcommand{\blact}{\blacktriangleright}  
\newcommand{\bract}{\blacktriangleleft}

\newcommand{{\gog}}{{G \rightrightarrows G_0}}

\newcommand{{\rra}}{\rightrightarrows}

\newcommand{{\lra}}{\ \longrightarrow \ }
\newcommand{{\lla}}{\ \longleftarrow \ }
\newcommand{{\lma}}{\ \longmapsto \ }



\newcommand{{\bull}}{{\scriptscriptstyle{\bullet}}}
\newcommand{{\qqquad}}{{\quad\quad\quad}}
\newcommand{\Aopp}{{\scriptscriptstyle{\Aop}}}
\newcommand{\Aee}{{\scriptscriptstyle{\Ae}}}


\sloppy

\begin{document}

\title{Gerstenhaber and Batalin-Vilkovisky structures on modules over operads} 

\author{Niels Kowalzig}

\begin{abstract}
In this article, we show under what additional ingredients 
a {comp} (or {opposite}) module
over an operad with multiplication
can be given the structure of a cyclic $k$-module and how the underlying simplicial homology 
gives rise to a Batalin-Vilkovisky module over the cohomology of the operad. 
In particular, one obtains a generalised Lie derivative and a generalised (cyclic) cap product that obey a Cartan-Rinehart homotopy formula, and hence yield the structure of a noncommutative differential calculus in the sense of Nest, Tamarkin, Tsygan, and others. 
Examples include the calculi known for the Hochschild theory of associative algebras, for Poisson structures, but above all the calculus for general left Hopf algebroids with respect to general coefficients (in which the classical calculus of vector fields and differential forms is contained).
\end{abstract}

\address{Istituto Nazionale di Alta Matematica, P.le Aldo Moro 5, 00185 Roma, Italia}

\email{kowalzig@mat.uniroma2.it}

\keywords{Gerstenhaber algebra, Batalin-Vilkovisky module, cyclic module, operad,
  noncommutative differential calculus, Hopf algebroids, Poisson algebras}

\subjclass[2010]{{18D50, 16E40, 19D55, 16E45, 16T05, 58B34.}}

\maketitle

\tableofcontents

\section{Introduction}
Higher structures,
such as those of Gerstenhaber or Batalin-Vilkovisky algebras resp.\ modules, 
 on the cohomology and homology groups of a given mathematical object---or, more generally, the corresponding homotopy structures on the underlying (co)chain complexes in connection with Deligne and formality conjectures---have been intensively investigated in recent years, in many cases in connection to operad theory or cyclic homology; see, for example, \cite{BraLaz:HBVAIPG, GalTonVal:HBVA, GerSch:ABQGAAD, GetJon:OHAAIIFDLS, GetKap:COACH, GerVor:HGAAMSO, Kau:APOACVODCVC, KauWarZun:TOOFGBVATME, KonSoi:NOAA, Kow:BVASOCAPB, KowKra:BVSOEAT, Kra:DOBVA, MarShnSta:OIATAP, McCSmi:ASODHCC, Men:BVAACCOHA, Men:CMCMIALAM, TamTsy:NCDCHBVAAFC, Tsy:FCFC, Vor:HGA, Vor:HDBAHA}, and references therein.

\subsection{Aims and objectives}

The main goal of this article is to detect the minimal structure required on the pair of an operad with multiplication $\cO$ 
(in the category of $k$-modules, where $k$ is a commutative ring) and of a (certain) 
module $\cM$ over $\cO$ to allow, when descending to the cohomology resp.\ homology associated to $\cO$ resp.\ $\cM$, for the full structure of a noncommutative differential calculus. This means (see Definition \ref{golfoaranci} for the full account), one wants a pair of a Gerstenhaber algebra $V$ and a Gerstenhaber module $\gO$ which is even Batalin-Vilkovisky, {\em i.e.}, a graded space $\gO_\bull$ that carries the structure of a graded module over $V^\bull$ (with action denoted by $\iota$), of a graded Lie algebra module over $V^{\bull+1}$ (with action denoted by $\lie$) obeying a Leibniz rule that mixes $\lie$ and $\iota$, plus a differential $B: \gO_\bull \to \gO_{\bull+1}$ that yields a (Cartan-Rinehart) homotopy formula connecting the three operators $\lie$, $\iota$, and $B$. One of the main steps in the construction is to add more structure in form of a cyclic operator $t$ and an ``extra'' comp module map $\bullet_0$ (see below for details) so as to 
obtain the structure of a simplicial and cyclic $k$-module on $\gO$;
the operator $B$ then is nothing else than the (Connes-Rinehart-Tsygan) cyclic coboundary (as the notation suggests).

In \cite{KowKra:BVSOEAT} such a theory was recently discussed on a relatively general level by making use of the notion of bialgebroids, and the question arose whether even the bialgebroid framework can be replaced by a more abstract setting. This article shows that, in fact, there is a more general structure behind.
However, whereas on the bialgebroid level the underlying simplicial structure on $\gO$ can be naturally given without asking for a cyclic operator $t$, in our construction we already need $t$ to define the ``last'' face map $d_n$ of the simplicial $k$-module structure on $\gO$. On the other hand, this is perhaps not too surprising: 
in Hochschild homology \cite{Hoch:OTCGOAAA} for a $k$-algebra $A$,
some sort of cyclic permutation in $d_n$ has tacitly taken place (which may be considered as a consequence of the fact that Hochschild homology can be computed---if $A$ is $k$-projective---by the (see \cite{Qui:CCAAE})
{\em cyclic tensor product} $M \otimes_\Aee \mathrm{Bar}(A)$ between an $A$-bimodule $M$ and the bar resolution of $A$). 
The same can be said 
in the context of a
left bialgebroid $U$ 
as in \cite[Eq.~(26)]{KowKra:CSIACT}: for defining $d_n$ a right $U$-module structure 
is needed, but right $U$-modules do barely have any useful properties unless there is some sort of Hopf structure on $U$ (which is, loosely speaking, equivalent to having a cyclic operator), see again {\em op.~cit.}, Lemmata 3.2 \& 3.3, for more information, and also Eq.~\rmref{corelli1} below.

\subsection{Cyclic modules over operads with multiplication}

If $(\cO, \mu)$ is an operad with multiplication in the category of $k$-modules (where $k$ is a commutative ring, see the main text for detailed definitions and conventions we use here) and $\cM = \{\cM(n)\}_{n \geq 0}$ a unital left {\em comp} (or {\em opposite}) module 
over this operad (see Definition \ref{molck}), 
we define 
an additional
structure on $\cM$ 
by 
adding two more ingredients which allows for the structure of a cyclic $k$-module on $\cM$: an {\em extra comp module map}
$$
\bullet_0: \cO(p) \otimes \cM(n) \to \cM({n-p+1}), \quad 0 \leq p \leq n+1,
$$
that is required to fulfil the same relations as the ``standard'' comp module maps $\bullet_i, \ i \geq 1$ (see Definition \ref{molck}), plus a morphism $t: \cM(n) \to \cM(n)$ that fulfils
\begin{equation*}
\label{lagrandebellezza1a}
t(\gvf \bullet_{i} x) = \gvf \bullet_{i+1} t(x), \qquad i = 0, \ldots, n-p,  \qquad \gvf \in \cO(p), \ x \in \cM(n). 
\end{equation*}
Of particular interest is the situation in which one has
$t^{n+1} = \id$:
in such a situation we call $\cM$ a {\em cyclic (unital) comp module over $\cO$}. With these assumptions, we prove in \S\ref{ilpattodifamiglia} 
(see Proposition \ref{sabaudia} for details):

\begin{prop}
\label{pomezia}
A cyclic unital comp module over $\cO$ gives rise to a cyclic $k$-module. 
\end{prop}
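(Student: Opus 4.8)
The plan is to write down the cyclic $k$-module structure on $\cM = \{\cM(n)\}$ explicitly and then check the defining relations of Connes' cyclic category $\Lambda$, that is, the (pre)simplicial identities together with the five relations coupling the cyclic operator to faces and degeneracies. Since an operad with multiplication supplies $\mu \in \cO(2)$ and a unit $e \in \cO(0)$, I would take as faces and degeneracies
\[
d_i := \mu \bullet_i (-) \colon \cM(n) \to \cM(n-1) \ (0 \le i \le n-1), \qquad s_i := e \bullet_i (-) \colon \cM(n) \to \cM(n+1),
\]
as cyclic operator the given $t$, and---this is the point stressed in the introduction---I would \emph{define} the top face through the cyclic operator by $d_n := d_0 \circ t$. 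Since $t^{n+1} = \id$ is part of the hypothesis of a cyclic unital comp module, only the compatibility of these operators remains.

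First I would dispose of the ``non-cyclic'' part. The (pre)simplicial identities among $d_0, \dots, d_{n-1}$ and $s_0, \dots, s_n$, together with the mixed relations $d_i s_j = s_{j-1} d_i$ for $i<j$, the cancellations $d_i s_i = \id = d_{i+1} s_i$, and $d_i s_j = s_j d_{i-1}$ for $i>j+1$, are purely a matter of the comp-module associativity axioms of Definition \ref{molck} combined with associativity $\mu \circ_1 \mu = \mu \circ_2 \mu$ and unitality $\mu \circ_1 e = \id = \mu \circ_2 e$ of the multiplication and with $\id \bullet_i x = x$. Here the hypothesis that $\cM$ be \emph{unital} enters, guaranteeing the face--degeneracy cancellations; this is standard operadic bookkeeping, in which I would not expect any surprises.

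Next come the genuinely cyclic relations. By construction $d_0 t = d_n$. For $1 \le i \le n-1$ the relation $d_i t = t d_{i-1}$ is immediate from the displayed compatibility $t(\gvf \bullet_j x) = \gvf \bullet_{j+1} t(x)$ on taking $\gvf = \mu$, since then $d_i t = (\mu \bullet_i)\,t = t\,(\mu \bullet_{i-1}) = t\, d_{i-1}$; likewise $s_i t = t\, s_{i-1}$ for $1 \le i \le n$ follows on taking $\gvf = e$, where the larger index range $0 \le j \le n$ available for $p=0$ is exactly what the degeneracies require.

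The main obstacle is the handful of relations that must \emph{wrap around the cycle}, namely those involving the top face $d_n$ (the last cyclic relation $d_n t = t\, d_{n-1}$ and the simplicial identities $d_i d_n = d_{n-1} d_i$, $d_n s_j = \dots$) as well as the top-degeneracy relation $s_0 t = t^2 s_n$. Here the compatibility relation alone is insufficient, because the top comp index $n-1$ (resp.\ $n$) lies just outside the range $j \le n-p$ in which it applies; one is forced to iterate it down to the extra map $\bullet_0$ and then invoke $t^{n+1} = \id$ to close the loop. Concretely, iterating $\mu \bullet_{j+1} = t\,(\mu \bullet_j)\,t^{-1}$ gives $\mu \bullet_{n-1} = t_{n-1}^{\,n-1}(\mu \bullet_0)\,t_n^{-(n-1)}$, whence
\[
t_{n-1}\, d_{n-1} = t_{n-1}^{\,n}\, d_0\, t_n^{-(n-1)} = d_0\, t_n^{2} = d_n\, t_n,
\]
using $t_{n-1}^{\,n} = \id$ and $t_n^{-(n-1)} = t_n^{2}$, both consequences of $t^{n+1} = \id$ at the relevant level. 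The surviving simplicial identities for $d_n$ then reduce to the ones already established after substituting $d_n = d_0 t$ and commuting $t$ past the lower faces via the cyclic relations, and $s_0 t = t^2 s_n$ is handled by the same wrap-around argument with $\gvf = e$. I would expect this final bookkeeping, rather than any conceptual difficulty, to be the real work; once the full list of relations is verified, $\cM$ is a functor on $\Lambda$, i.e.\ a cyclic $k$-module (the detailed indexing being pinned down in Proposition \ref{sabaudia}).
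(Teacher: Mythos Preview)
Your approach is essentially the paper's own proof (Proposition~\ref{sabaudia}): define the faces via $\mu \bullet_i$, the last face via $d_0 t$, the degeneracies via $e$, and then grind through the simplicial and cyclic identities using the comp-module associativity, the operad relations for $\mu$ and $e$, the compatibility $t(\varphi\bullet_i x)=\varphi\bullet_{i+1}t(x)$, and cyclicity $t^{n+1}=\id$. Your treatment of the wrap-around identities (iterating the compatibility down to $\bullet_0$ and closing with $t^{n+1}=\id$) is exactly how the paper handles, e.g., $d_{n-1}d_n=d_{n-1}d_{n-1}$ and $s_0 t = t^2 s_n$.

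There is, however, an indexing slip in your degeneracies. You set $s_i:=e\bullet_i$, but with this choice the identity $d_{i+1}s_i=\id$ fails: by the middle line of \eqref{SchlesischeStr}, $\mu\bullet_{i+1}(e\bullet_i x)$ falls into the case $j<i+1$ (since $j=i$) and rewrites to $e\bullet_i(\mu\bullet_i x)$, not to $x$. What does work is
\[
d_i s_i = \mu\bullet_i(e\bullet_{i+1}x)=(\mu\circ_2 e)\bullet_i x = \mathbb{1}\bullet_i x = x,
\qquad
d_{i+1}s_i = \mu\bullet_{i+1}(e\bullet_{i+1}x)=(\mu\circ_1 e)\bullet_{i+1}x = x,
\]
so the correct assignment is $s_j:=e\bullet_{j+1}$ for $j=0,\dots,n$, as in \eqref{colleoppio}. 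In effect your $s_i=e\bullet_i$ are the maps $s_{-1},s_0,\dots,s_{n-1}$ (with $s_{-1}=e\bullet_0$ the extra degeneracy, cf.\ Remark~\ref{narni}), and you are missing $s_n=e\bullet_{n+1}$. Once you shift the index by one, all of your remaining arguments go through verbatim and coincide with the paper's proof.
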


This allows, in a standard fashion, to define 
not only simplicial (Hochschild) homology on $\cM$ (with underlying simplicial $k$-module structure denoted by $\cM_\bull$) 
with respect to a boundary 
operator $b: \cM_\bull \to \cM_{\bull-1}$, but also a mixed complex $(\cM, b, B)$ with coboundary operator $B: \cM_\bull \to \cM_{\bull+1}$, and hence for the definition of the cyclic homology groups of $\cM$ (see Definition \ref{avviso}). 

\subsection{The Gerstenhaber module}

In particular, once the notion of a cyclic unital comp module over an operad with multiplication is established, we proceed in \S\ref{federtasche} by introducing, for $\gvf \in \cO(p)$ and $ x \in \cM(n)$, 
a general notion of 
{\em cap product} 
$$
\iota_\gvf x := \gvf \smallfrown x := (\mu \circ_2 \gvf) \bullet_0 x,  
$$ 
and of 
{\em Lie derivative}  
\begin{equation*}
        \lie_\varphi x :=         
\sum^{n-p+1}_{i=1} 
(-1)^{(p-1)(i-1)} 
\varphi \bullet_i x 
+ 
\sum^{p}_{i=1} 
(-1)^{n(i-1) + p -1} 
\varphi \bullet_0 \ttt^{i-1} (x),  
\end{equation*}
%
%
see Definition \ref{volterra} for all details. 
Recall then from \cite{Ger:TCSOAAR, GerSch:ABQGAAD, McCSmi:ASODHCC} the well-known fact that an operad with multiplication $\cO$ gives rise to a cosimplicial space (usually denoted by $\cO^\bull$) with coboundary $\gd$ 
and cohomology $H^\bull(\cO) := H(\cO^\bull, \gd)$. Together with the {\em cup product} $\smallsmile$, defined in terms of the operad multiplication and the {\em comp maps} (Gerstenhaber products) of the Gerstenhaber algebra, one obtains a differential graded associative algebra, and with respect to this structure we prove in Proposition \ref{estintore}:

\begin{prop}
$(\cM_{-\bull},\bb,\smallfrown)$ forms a left dg module over 
$(\cO^\bull,\delta,\smallsmile)$.
\end{prop}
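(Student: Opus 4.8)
The plan is to verify directly the three defining properties of a left differential graded module: that $\smallfrown$ is an associative and unital action of the cup-product algebra, and that the boundary $\bb$ is a graded derivation for this action with respect to $\delta$. Throughout I would work at the level of the explicit comp module maps $\bullet_i$ and the cyclic operator $t$, reducing every identity to the comp module axioms of Definition \ref{molck} together with the cyclicity relation $t(\gvf \bullet_i x) = \gvf \bullet_{i+1} t(x)$; only at the end does one pass to (co)homology, where the derivation property guarantees that $\smallfrown$ descends.

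First I would treat \emph{associativity}, i.e.\ $(\gvf \smallsmile \psi)\smallfrown x = \gvf \smallfrown(\psi \smallfrown x)$ for $\gvf \in \cO(p)$, $\psi \in \cO(q)$, $x \in \cM(n)$. Unfolding the definitions, the left-hand side is $(\mu \circ_2(\gvf \smallsmile \psi))\bullet_0 x$ while the right-hand side is $(\mu \circ_2\gvf)\bullet_0\bigl((\mu\circ_2\psi)\bullet_0 x\bigr)$. Expanding $\gvf \smallsmile \psi$ in terms of $\mu$ and the partial compositions $\circ_i$ and using the associativity $\mu\circ_1\mu = \mu\circ_2\mu$ of the operad multiplication, the two nested applications of $\bullet_0$ on the right collapse — via the comp module relation governing $(\gvf \circ_k\psi)\bullet_i x$ — into the single $\bullet_0$ on the left. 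The unitality $e \smallfrown x = x$ for the unit $e \in \cO(0)$ of $\smallsmile$ then follows at once from $\mu \circ_2 e = \id$ and from the unitality $\id \bullet_0 x = x$ of the comp module.

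The main work, and the main obstacle, is the \emph{Leibniz (chain-map) identity}
\[
\bb(\gvf \smallfrown x) = \delta\gvf \smallfrown x + (-1)^{|\gvf|}\,\gvf \smallfrown \bb x,
\]
the sign being fixed by the grading of $\cO^\bull$. Here I would substitute $\bb = \sum_{i=0}^{n}(-1)^i d_i$ and $\delta = \sum_{i}(-1)^i\delta_i$, insert the definitions of the face maps $d_i$ (built from $\mu$ and the $\bullet_i$, the last face $d_n$ crucially involving $t$ as produced by Proposition \ref{pomezia}) and of the cofaces $\delta_i$ (built from $\circ_i$ and $\mu$), and rewrite $\gvf \smallfrown x = (\mu\circ_2\gvf)\bullet_0 x$. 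Commuting each $d_i$ past $\bullet_0$ by means of the comp module axioms produces a large alternating sum in which the bulk of the terms cancel telescopically; the surviving terms assemble into $\delta\gvf \smallfrown x$ on one side and into $\gvf \smallfrown \bb x$ on the other.

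The delicate point is the bookkeeping of signs together with the role of the cyclic operator. Because $d_n$ is \emph{defined} through $t$ (and $\bullet_0$), the terms of $\bb(\gvf \smallfrown x)$ coming from the first and last faces mix the extra comp module map with powers of $t$; matching these against the cofaces $\delta_0 = \mu\circ_2(-)$ and $\delta_{p+1} = \mu\circ_1(-)$ of $\delta$ is precisely where the relation $t(\gvf \bullet_i x) = \gvf \bullet_{i+1} t(x)$ and the cyclicity $t^{n+1}=\id$ must be invoked. I expect this cancellation — ensuring that the $t$-twisted contributions recombine into the untwisted expression $\delta\gvf \smallfrown x$ with the correct sign $(-1)^{|\gvf|}$ — to be the crux of the argument; without the cyclic structure of Proposition \ref{pomezia} neither $d_n$ nor this matching would even be available. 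Once the identity holds on chains, it exhibits $\bb$ as a graded derivation for the $\smallsmile$-action, whence $(\cM_{-\bull},\bb,\smallfrown)$ is a left dg module over $(\cO^\bull,\delta,\smallsmile)$, completing the proof.
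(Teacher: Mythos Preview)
Your approach is essentially the paper's own: prove $\iota_\varphi\iota_\psi=\iota_{\varphi\smallsmile\psi}$ by collapsing nested $\bullet_0$'s via the comp-module axioms and the associativity $\mu\circ_1\mu=\mu\circ_2\mu$, then prove $[b,\iota_\varphi]=\iota_{\delta\varphi}$ by expanding both sides in simplicial pieces $d_i$ and matching.

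One correction to your expectations: the cyclic relation $t^{n+1}=\id$ is \emph{not} needed here, and the $t$-twisted contributions do not recombine into $\delta\varphi\smallfrown x$. What actually happens is simpler. The only place $t$ enters is in the last face on each side: $d_{n-p}\iota_\varphi x = \mu\bullet_0 t\bigl((\mu\circ_2\varphi)\bullet_0 x\bigr)$ and $\iota_\varphi d_n x = (\mu\circ_2\varphi)\bullet_0(\mu\bullet_0 t(x))$. A single application of the shifting identity \rmref{lagrandebellezza1} rewrites the first as $(\mu\circ_2(\mu\circ_2\varphi))\bullet_0 t(x)$, and the second reduces to the same expression by the comp-module axiom; these two terms then cancel against each other with opposite signs in $[b,\iota_\varphi]$. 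The extreme cofaces of $\delta\varphi$ are supplied instead by the \emph{untwisted} pieces $d_0\iota_\varphi x$ and $\iota_\varphi d_0 x$, which yield $(\mu\circ_2(\mu\circ_1\varphi))\bullet_0 x$ and $(\mu\circ_2(\mu\circ_2\varphi))\bullet_0 x$ respectively. So the ``crux'' you anticipate is in fact the easy part; the genuine bookkeeping lies in tracking the middle terms $\iota_\varphi d_i x$ for $1\le i\le p$ against the inner cofaces $\varphi\circ_i\mu$.
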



What is more, any operad defines a graded Lie bracket (see Eq.~\rmref{zugangskarte}), which we call (already on the cochain level) the {\em Gerstenhaber bracket} $\{.,.\}$. 
When shifting the degrees of (the cosimplicial module associated to) the operad by one, we prove in Theorem \ref{feinefuellhaltertinte} with respect to this bracket:

\begin{theorem}
The Lie derivative $\lie$ defines a dg 
Lie algebra representation of $(\cO^{\bull+1}, \{.,.\})$ on $\cM_{-\bull}$. 
\end{theorem}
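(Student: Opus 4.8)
The statement asserts that $\lie$ is a representation of the differential graded Lie algebra $(\cO^{\bull+1},\gd,\{.,.\})$ on the complex $(\cM_{-\bull},\bb)$, so the plan is to verify the two conditions this entails. The first is that $\lie$ is a homomorphism of graded Lie algebras,
\[
[\lie_\gvf,\lie_\psi]:=\lie_\gvf\lie_\psi-(-1)^{(p-1)(q-1)}\lie_\psi\lie_\gvf=\lie_{\{\gvf,\psi\}}, \qquad \gvf\in\cO(p),\ \psi\in\cO(q),
\]
where the sign reflects the shifted degrees $|\gvf|=p-1$, $|\psi|=q-1$. The second is the chain-map (dg) compatibility with the two differentials,
\[
\bb\,\lie_\gvf-(-1)^{p-1}\lie_\gvf\,\bb=\lie_{\gd\gvf},
\]
which expresses that $\lie$ intertwines the Gerstenhaber coboundary $\gd$ on $\cO^{\bull+1}$ with the simplicial boundary $\bb$ on $\cM_{-\bull}$.

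My first move would be to reduce the second identity to the first. Recalling that on an operad with multiplication the coboundary is the inner bracket $\gd\gvf=\{\mu,\gvf\}$ with the multiplication $\mu\in\cO(2)$, I would establish the lemma that, up to an overall sign, the simplicial boundary is the Lie derivative along $\mu$, i.e.\ $\bb=\lie_\mu$. This is a direct comparison: evaluating the defining two sums of $\lie_\mu$ at $p=2$ gives $\sum_{i=1}^{n-1}(-1)^{i-1}\mu\bullet_i x$ together with $-\mu\bullet_0 x+(-1)^{n+1}\mu\bullet_0\ttt(x)$, and this matches the alternating sum $\sum_i(-1)^i d_i$ of the face maps (the inner faces $d_i=\mu\bullet_i$, and the two outer faces built from $\mu\bullet_0$ and its $\ttt$-twist, see the discussion preceding Proposition \ref{pomezia}). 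Granting $\bb=\lie_\mu$, the dg compatibility is exactly the special case $\psi=\mu$ of the Lie homomorphism property, so the entire theorem rests on that single identity.

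The core is therefore the graded Lie homomorphism identity, which I would prove by direct expansion, in parallel with the associative statement of Proposition \ref{estintore} but accounting for the antisymmetry of the bracket. Writing each $\lie$ as its $\bullet_i$-part ($i\geq 1$) plus its $\ttt$-twisted $\bullet_0$-part, the composite $\lie_\gvf\lie_\psi$ splits into: disjoint terms, in which $\gvf$ and $\psi$ are inserted into separate slots of $x$; nested terms, in which one cochain is inserted into the output of the other and which, by the comp-module associativity relations of Definition \ref{molck}, assemble into the operadic insertions making up the Gerstenhaber circle products $\gvf\bar\circ\psi$ and $\psi\bar\circ\gvf$; and mixed terms arising from a $\bullet_0\ttt^{k}$ factor meeting a $\bullet_i$ factor. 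One checks that the disjoint terms occur with matching signs in $\lie_\gvf\lie_\psi$ and $(-1)^{(p-1)(q-1)}\lie_\psi\lie_\gvf$ and hence cancel in the graded commutator, while the nested terms reassemble into $\lie_{\gvf\bar\circ\psi}-(-1)^{(p-1)(q-1)}\lie_{\psi\bar\circ\gvf}=\lie_{\{\gvf,\psi\}}$ by the definition \rmref{zugangskarte} of the bracket.

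The main obstacle is the bookkeeping for the $\ttt$-twisted $\bullet_0$ terms. When the outer derivative acts through a summand $\gvf\bullet_0\ttt^{\,i-1}(-)$ on an inner summand $\psi\bullet_j x$ or $\psi\bullet_0\ttt^{\,k}(x)$, one must push the powers of $\ttt$ past the comp-module maps by repeated use of $\ttt(\psi\bullet_i x)=\psi\bullet_{i+1}\ttt(x)$, re-indexing the insertion slots each time, and the resulting telescoping sums only close up after invoking the cyclicity $\ttt^{\,n+1}=\id$. Tracking the accumulated signs $(-1)^{n(i-1)}$ through these re-indexings---and confirming that the correction terms produced when a slot index leaves the admissible range are precisely those that recast the nested contributions into the cyclic form demanded by $\lie_{\{\gvf,\psi\}}$---is the delicate point; the remainder is a routine application of the (opposite/comp) module axioms of Definition \ref{molck}.
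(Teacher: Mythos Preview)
Your approach is essentially the paper's own: one first identifies $\bb$ with the Lie derivative along $\mu$ and then deduces the dg compatibility \rmref{alles1} from the Lie homomorphism identity \rmref{weimar}, the latter being proven by expanding both sides into the four cross-terms of the two summands in \rmref{messagedenoelauxenfantsdefrance2}, applying the comp-module relations \rmref{SchlesischeStr} together with \rmref{lagrandebellezza1}--\rmref{lagrandebellezza2}, and matching the resulting pieces after re-indexing. One small correction: the identification is $\bb=-\lie_\mu$ (not $\bb=\lie_\mu$), so the dg compatibility reads $[\bb,\lie_\gvf]=-\lie_{\gd\gvf}$ as in \rmref{alles1}; your own expansion of $\lie_\mu$ already shows the extra overall sign, which you should carry through.
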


By the preceding results one obtains in particular the fact that both Lie derivative and cap product descend to well-defined operators on the simplicial homology $H_\bull(\cM) := H(\cM_\bull,b)$, as soon as one acts with cocycles of the cosimplicial space $(\cO^\bull, \gd)$, and in such a case one obtains a mixed Leibniz rule between $\lie$ and $\iota$. In 
Theorem \ref{waterbasedvarnish}, we prove:

\begin{theorem}
For any two cocycles 
$\varphi$ and $\psi$, one has
\begin{equation*}
\label{radicale1a}
[\iota_\psi, \lie_\varphi] = \iota_{\{\psi, \gvf\}}
\end{equation*}
for the induced maps on homology.
\end{theorem}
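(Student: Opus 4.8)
The plan is to establish the identity already at the level of the chain complex $\cM_\bull$, showing that the graded commutator $[\iota_\psi,\lie_\varphi]$ agrees with $\iota_{\{\psi,\varphi\}}$ up to a $\bb$-boundary and up to terms carrying a factor $\delta\varphi$ or $\delta\psi$; passing to homology and invoking $\delta\varphi=\delta\psi=0$ then gives the claim. Throughout I would use: the explicit expressions for $\iota$ and $\lie$ from Definition \ref{volterra}; the associativity-type comp module relations satisfied by the maps $\bullet_i$, $i\geq 1$, and by the extra map $\bullet_0$; the cyclic compatibility $\ttt(\varphi\bullet_i x)=\varphi\bullet_{i+1}\ttt(x)$ together with $\ttt^{n+1}=\id$; and the description of $\{\psi,\varphi\}$ in terms of the operadic comp maps $\circ_i$. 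That $\iota_\psi$ and $\lie_\varphi$ individually descend to $H_\bull(\cM)$ for cocycles is already guaranteed by Proposition \ref{estintore} and Theorem \ref{feinefuellhaltertinte}, so that the commutator on the right is well defined.

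The computation itself I would organise by splitting $\lie_\varphi$ into its ``inner'' part $\sum_{i\geq 1}(-1)^{(p-1)(i-1)}\varphi\bullet_i(-)$ and its cyclic part $\sum_{i=1}^{p}(-1)^{n(i-1)+p-1}\varphi\bullet_0 \ttt^{i-1}(-)$, and expanding both $\iota_\psi\lie_\varphi$ and $\lie_\varphi\iota_\psi$ by means of the comp relations. Here $\iota_\psi=(\mu\circ_2\psi)\bullet_0(-)$ contributes a single $\bullet_0$-insertion, and the comp relations allow me to commute it past each $\bullet_i$. The terms in which $\varphi$ and $\psi$ are inserted into disjoint slots of $x$ appear with opposite signs in the two orders and cancel in pairs, whereas the terms in which one of $\varphi,\psi$ is inserted into a slot of the other reassemble---after using $\mu$ and the unit relation for $\bullet_0$---into the two summands of the Gerstenhaber bracket, producing $\iota_{\psi\,\circ\,\varphi}$ minus the sign-corrected $\iota_{\varphi\,\circ\,\psi}$, that is, $\iota_{\{\psi,\varphi\}}$. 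The cyclic part requires $\ttt(\varphi\bullet_i x)=\varphi\bullet_{i+1}\ttt(x)$ and $\ttt^{n+1}=\id$ to telescope: most of the $\ttt^{i-1}$-weighted summands cancel against shifted copies of themselves, the alternating signs $(-1)^{n(i-1)+p-1}$ being arranged precisely so that only the extremal terms survive, and these recombine into the $\bullet_0$-contribution of $\iota_{\{\psi,\varphi\}}$ together with an explicit $\bb$-exact remainder.

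The main obstacle is exactly this bookkeeping: tracking the interaction of the three sign families (the commutator sign $(-1)^{\,|\iota_\psi||\lie_\varphi|}$, the internal weights of $\lie_\varphi$, and the signs produced by the comp relations) through the telescoping of the cyclic terms, so as to verify that the residue is a genuine $\bb$-boundary rather than a spurious correction to the bracket. The cocycle hypotheses enter at two points: they annihilate the terms in which the reorganisation forces a $\delta\varphi$ or $\delta\psi$ to appear (these arise when an insertion lands on the slot created by $\mu$ in $\mu\circ_2\psi$), and they ensure, via Proposition \ref{estintore} and Theorem \ref{feinefuellhaltertinte}, that the passage to homology is legitimate. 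As a consistency check one notes that on $H_\bull(\cM)$ the graded commutator $[\iota_\psi,\iota_\varphi]$ vanishes, since the cup product is graded-commutative on $H^\bull(\cO)$; the identity at hand is then precisely the remaining compatibility between the associative action $\iota$ and the Lie action $\lie$, completing the calculus. Reducing modulo $\bb$-boundaries on homology finally yields $[\iota_\psi,\lie_\varphi]=\iota_{\{\psi,\varphi\}}$.
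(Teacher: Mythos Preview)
Your outline follows the paper's opening moves fairly closely: splitting $\lie_\varphi$ into its inner sum and its cyclic sum, commuting $\iota_\psi=(\mu\circ_2\psi)\bullet_0(-)$ past the various $\bullet_i$ using the comp relations, and observing that the disjoint-slot terms from the inner part cancel in pairs. Up to this point the paper does exactly the same thing, and one indeed recovers $\iota_{\psi\,\bar\circ\,\varphi}$ from the overlapping terms of the inner part (this is the first summand of equation \rmref{nichschonwieder}).

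The genuine gap is in your treatment of the cyclic part. It does \emph{not} simply telescope with only extremal terms surviving. When you carry out the computation, what remains after the inner cancellation are not pieces of $\iota_{\varphi\,\bar\circ\,\psi}$ but rather terms of the form $(\psi\smallsmile\varphi)\bullet_0 t^{i-1}(x)$ and $(\varphi\circ_i(\mu\circ_2\psi))\bullet_0 t^{i-1}(x)$, summed over $i=1,\ldots,p$. To convert these into $-(-1)^{(p-1)(q-1)}\iota_{\varphi\,\bar\circ\,\psi}$ the paper invokes a key lemma you do not mention: the Gerstenhaber identity
\[
\psi\smallsmile\varphi \;=\;(-1)^{pq}\varphi\smallsmile\psi\;-\;(-1)^{p(q-1)}\delta(\varphi\,\bar\circ\,\psi),
\]
valid for cocycles, which trades $\psi\smallsmile\varphi$ for $\varphi\smallsmile\psi$ at the cost of a $\delta$-coboundary. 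After this substitution the remaining terms are dealt with not by an explicit $b$-exact correction but by an \emph{iterative} argument on homology: one uses $b(x)=0$ to expand $(\mu\circ_1(\varphi\circ_j\psi))\bullet_0 t^{i-1}(x)$ via the simplicial relations, regroups, and observes that the result reproduces the same shape with $t^{p}$ replaced by $t^{p-1}$, plus two of the desired summands. Running this $p-1$ more times produces equation \rmref{lucca8}. So the cocycle hypothesis enters more substantially than ``annihilating stray $\delta\varphi$, $\delta\psi$'': it is used at each step of the iteration through $b(x)=0$, and in the Gerstenhaber cup-commutativity lemma. Without these two ingredients the cyclic terms do not reduce to $\iota_{\varphi\,\bar\circ\,\psi}$, and no purely telescoping argument will close the gap.
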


These three results together mentioned in this subsection constitute the fact that $\cM$ forms a {\em Gerstenhaber module} over $\cO$, in the sense of Definition \ref{golfoaranci} below.

\subsection{The Batalin-Vilkovisky module}

For the structure of a {\em Batalin-Vilkovisky module} (as in Definition \ref{golfoaranci} below), hence for the full structure of a {\em noncommutative differential calculus} in the terminology of \cite{Tsy:CH}, one needs to take the full cyclic bicomplex into account and find what we call a {\em cyclic correction} to the cap product, given by an operator $S_\gvf$ introduced in \S\ref{gallimard}. One defines
\begin{equation*}
        S_\varphi := 
        \sum^{n-p+1}_{j=1} \, 
        \sum^{n - p +1}_{i=j} 
              (-1)^{n(j-1) + (p-1)(i-1)}               
             e \bullet_0 \big(\gvf \bullet_i t^{j-1}(x)\big),
\end{equation*}
where $e$ is part of the multiplicative structure of $\cO$ called the {\em unit}, see Definition \ref{moleskine}. With the help of this operator and the cyclic coboundary $B$ mentioned after Proposition \ref{pomezia}, we finally prove in \S\ref{lumograph} a Cartan-Rinehart homotopy formula for cyclic unital modules over operads with multiplication:

\begin{theorem}
For $\varphi$ in the normalised cochain complex $\bar{\cO}^\bull$, the homotopy formula
\begin{equation*}
\lie_\varphi = [\BB+\bb, \iota_\varphi + \SSS_\gvf] - \iota_{\gd\varphi} - \SSS_{\gd\varphi},
\end{equation*}
holds on the normalised chain complex $\bar \cM_\bull$.
In particular, for the induced maps on homology, one obtains for any cocycle $\gvf \in \bar{\cO}^\bull$ the homotopy
\begin{equation*}
\lie_\gvf = [B, \iota_\gvf],
\end{equation*}
that is, the pair $(H^\bull(\cO), H_\bull(\cM))$ defines a noncommutative differential calculus.
\end{theorem}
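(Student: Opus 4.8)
The plan is to expand the total graded commutator and then organise the verification degree by degree. With $d=\BB+\bb$ one has $[\,d,\iota_\gvf+\SSS_\gvf\,]=[\bb,\iota_\gvf]+[\BB,\iota_\gvf]+[\bb,\SSS_\gvf]+[\BB,\SSS_\gvf]$, so I first record the homological degrees of the operators on $\bar\cM_\bull$ (placing $\cM(n)$ in degree $n$). Since $\gvf\in\cO(p)$ and the unit lives in arity zero, $e\in\cO(0)$, the operator $e\bullet_0$ \emph{raises} degree by one and plays the role of the extra degeneracy; hence $\iota_\gvf$ lowers degree by $p$, whereas $\lie_\gvf$, $\SSS_\gvf$, $\iota_{\gd\gvf}$ and $\SSS_{\gd\gvf}$ lower it by $p-1$, $p-2$, $p+1$ and $p-1$, respectively. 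Comparing homogeneous components, the single displayed identity is equivalent to the three identities
$$\text{(A)}\quad [\bb,\iota_\gvf]=\iota_{\gd\gvf},\qquad \text{(B)}\quad \lie_\gvf=[\BB,\iota_\gvf]+[\bb,\SSS_\gvf]-\SSS_{\gd\gvf},\qquad \text{(C)}\quad [\BB,\SSS_\gvf]=0.$$
Now (A) is, up to the sign conventions of the graded commutator, precisely the dg-module property recorded in Proposition~\ref{estintore}, so it may be taken as known; the content lies in (B) and (C).

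Identity (C) I would settle by a short normalisation argument. Writing Connes' operator in its reduced form $\BB=\sss N$, with norm $N=\sum_k\ttt^{\,k}$ and $\sss$ the extra degeneracy, and recalling that every summand of $\SSS_\gvf$ carries an outermost factor $e\bullet_0$, both composites $\BB\SSS_\gvf$ and $\SSS_\gvf\BB$ acquire two consecutive degeneracy-type operators; using the covariance relation $\ttt(\gvf\bullet_i x)=\gvf\bullet_{i+1}\ttt(x)$ and $\ttt^{\,n+1}=\id$ to normal-order them, they reduce to degenerate contributions and vanish on $\bar\cM_\bull$.

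The heart of the matter is (B), which I would prove by computing its three summands and adding them. Expanding $[\BB,\iota_\gvf]$ through $\BB=\sss N$ and applying the covariance relation together with $\ttt^{\,n+1}=\id$ reproduces the cyclic sum $\sum_{i=1}^{p}(-1)^{n(i-1)+p-1}\gvf\bullet_0\ttt^{\,i-1}(x)$, the second sum of $\lie_\gvf$, modulo terms still carrying a factor $e\bullet_0$. In $[\bb,\SSS_\gvf]$ the alternating faces act on the outer $e\bullet_0$: unitality of $e$ (Definition~\ref{moleskine}) makes a face composed with $e\bullet_0$ either collapse to the identity on the inner argument or reassemble a coface $\gvf\circ_i\mu$, while the remaining faces cause the double sum over $(i,j)$ to telescope in $j$. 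The collapsing contributions yield the simplicial sum $\sum_{i=1}^{n-p+1}(-1)^{(p-1)(i-1)}\gvf\bullet_i x$, the first sum of $\lie_\gvf$; the reassembling contributions produce $\SSS_{\gd\gvf}$ and, together with the $e\bullet_0$-remainders of $[\BB,\iota_\gvf]$, cancel. Adding the three summands leaves exactly $\lie_\gvf$. The main obstacle is precisely this bookkeeping: matching the nested double sum defining $\SSS_\gvf$ and its $\bb$-image against the $\ttt$-twisted remainders. The signs $(-1)^{n(j-1)+(p-1)(i-1)}$ in $\SSS_\gvf$ are tuned so that the telescoping in $j$ closes and the covariance relation aligns the indices, and it is here that $\ttt^{\,n+1}=\id$, unitality, and the passage to the normalised complex are all indispensable.

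Finally, the homology formula is read off from (B). For a cocycle $\gvf\in\bar\cO^\bull$ one has $\gd\gvf=0$, so $\SSS_{\gd\gvf}=0$ and, by (A), $\iota_\gvf$ commutes with $\bb$ and descends; moreover $[\bb,\SSS_\gvf]$ sends a $\bb$-cycle $x$ to the boundary $\bb(\SSS_\gvf x)$ and therefore induces the zero map on $H_\bull(\cM,\bb)$. Hence $\lie_\gvf=[\BB,\iota_\gvf]$ on homology. Combined with the dg-module structure (Proposition~\ref{estintore}), the dg Lie representation (Theorem~\ref{feinefuellhaltertinte}), and the mixed Leibniz rule $[\iota_\psi,\lie_\gvf]=\iota_{\{\psi,\gvf\}}$ (Theorem~\ref{waterbasedvarnish}), this homotopy is the remaining axiom of Definition~\ref{golfoaranci}, so the pair $(H^\bull(\cO),H_\bull(\cM))$ is a noncommutative differential calculus.
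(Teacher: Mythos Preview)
Your decomposition into (A), (B), (C) and your treatment of (A), (C), and the homology corollary are all correct and coincide with the paper's approach: the paper likewise invokes Proposition~\ref{estintore} for (A), proves (C) as a separate lemma by exactly the normalisation argument you sketch, and then reduces the main identity to (B).

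Where your sketch of (B) falls short is in the accounting. Two pieces are missing or misdescribed. First, your claim that $[\BB,\iota_\gvf]$ reproduces the cyclic part of $\lie_\gvf$ ``modulo terms still carrying a factor $e\bullet_0$'' is not accurate: the half $(-1)^{p-1}\iota_\gvf\BB$ simplifies (via $(\mu\circ_2\gvf)\bullet_0(e\bullet_0 y)=\gvf\bullet_0 y$) to $\sum_{j=0}^{n}(-1)^{nj+p-1}\gvf\bullet_0 t^j(x)$, so besides the desired $\sum_{i=1}^{p}$ one is left with the tail $\sum_{j=p}^{n}(-1)^{nj+p-1}\gvf\bullet_0 t^j(x)$, which carries \emph{no} $e\bullet_0$. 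This tail does not cancel against anything in $\BB\iota_\gvf$; in the paper it is killed by a specific piece of $\bb\SSS_\gvf$ (coming from the last face $d_{n-p+2}=\mu\bullet_0 t$ applied to $e\bullet_0(\gvf\bullet_{n-p+1}t^{j-1}x)$). Second, your description of $[\bb,\SSS_\gvf]$ only treats the half $\bb\SSS_\gvf$ (``faces act on the outer $e\bullet_0$''); the other half $(-1)^{p-1}\SSS_\gvf\bb$ is never addressed. In the paper this half produces further terms which cancel against the residue of $\bb\SSS_\gvf$ and against parts of $-\SSS_{\gd\gvf}$; without it the bookkeeping does not close. In short, the paper carries out precisely the term-by-term computation you allude to, but it involves some two dozen labelled summands and several triple-sum reindexings, and your narrative omits two of the cancelling families. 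The strategy is right, but the sketch as written would not complete.
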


In more restricted situations, 
the operators $\bb, \BB, \iota, \lie$, and $\SSS$ along with relations analogous to those mentioned above appeared in the literature a couple of times before, to our knowledge starting with Rinehart \cite{Rin:DFOGCA}, where
these operators on Hochschild (co)chains for commutative associative algebras were called (in the same order) $\Delta$, $\bar d$,
$c$, $\theta$, and $f$, respectively. In noncommutative Hochschild theory, the Lie derivative for $1$-cocycles, denoted by $\delta^*$, appeared in Connes \cite[p.~124]{Con:NCDG}, and also in Goodwillie \cite{Goo:CHDATFL}, who in addition dealt with $\iota$ and $\SSS$, denoted by $e$ and $E$, respectively. Both Getzler \cite{Get:CHFATGMCICH}, where they are denoted by $\mathbf{b}$ and $\mathbf{B}$, as well as Gel'fand, Daletski{\u\i}, Nest, and Tsygan \cite{GelDalTsy:OAVONCDG, NesTsy:OTCROAA, Tsy:CH}, generalised these operators to arbitrary cochains (and also to differential graded algebras and multiple cochain entries). Finally, in \cite{KowKra:BVSOEAT} the theory was enlarged to the realm of (left) Hopf algebroids, containing both the Hochschild theory as well as the classical Cartan calculus in differential geometry as examples.

\subsection{Applications and examples}

In \S\ref{castropretorio1}, we discuss how the well-known calculus structure on Hochschild chains and cochains from the literature cited above fits into our construction, extending the theory by introducing coefficients. 
In \S\ref{castropretorio2}, we resume the discussion started in \cite{Kow:BVASOCAPB} of how the homology and cohomology of noncommutative Poisson algebras \cite{Xu:NCPA} and the Poisson bicomplex \cite{Bry:ADCFPM}  can be described by our theory, again seizing the occasion of introducing coefficients in Poisson homology. 
In \S\ref{castropretorio3} we explain how the noncommutative differential calculus on bialgebroids resp.\ left Hopf algebroids of \cite{KowKra:BVSOEAT} can be understood by the approach developed in this article, and we extend the theory in \cite{KowKra:BVSOEAT}, using a result in \cite{Kow:BVASOCAPB}, by introducing more general coefficients in the cochain space that assumes the r\^ole of the operad. The outcome is given by Theorem \ref{zanjrevolution} and Corollary \ref{dieda}:

\begin{theorem}
Let $(U,A)$ be a left Hopf algebroid, $M$ an anti Yetter-Drinfel'd module and $N$ a braided commutative Yetter-Drinfel'd algebra. Then $C_\bull(U,M \otimes_\Aopp N)$ forms a para-cyclic module over the operad $C^\bull(U,N)$ with multiplication, which is cyclic if $M \otimes_\Aopp N$ is stable.
In this case, the pair $\big(H^\bull(U,N), H_\bull(U,M \otimes_\Aopp N)\big)$ carries a canonical structure of a noncommutative differential calculus, and if $U$ is right $A$-projective, then this is true for the pair 
$\big(\Ext^\bull_\uhhu(A,N), \Tor^\uhhu_\bull(M,N)\big)$.
\end{theorem}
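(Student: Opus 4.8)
The plan is to realise the entire statement as an instance of the abstract machinery of \S\S\ref{ilpattodifamiglia}--\ref{lumograph}: once $\big(C^\bull(U,N),\mu\big)$ has been exhibited as an operad with multiplication and $C_\bull(U,M\otimes_\Aopp N)$ as a cyclic unital comp module over it, the noncommutative differential calculus is produced automatically by Proposition \ref{pomezia}, Proposition \ref{estintore}, Theorems \ref{feinefuellhaltertinte} and \ref{waterbasedvarnish}, and the homotopy formula of \S\ref{lumograph}. Thus the content of the theorem reduces to three tasks: (a) recalling the operad-with-multiplication structure on $C^\bull(U,N)$; (b) equipping the chains with comp module maps $\bullet_i$ for $i\ge 1$, an extra comp module map $\bullet_0$, and a cyclic operator $t$; and (c) verifying the defining relations of Definition \ref{molck} together with the intertwining relation $t(\gvf\bullet_i x)=\gvf\bullet_{i+1}t(x)$ and, under stability, $t^{n+1}=\id$.

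For (a), the braided commutative Yetter-Drinfel'd algebra $N$ is precisely the datum turning the cochain spaces $C^p(U,N)$ into an operad whose comp (Gerstenhaber) operations and multiplication $\mu$ make it an operad with multiplication; this is the cohomological half of \cite{KowKra:BVSOEAT} (see also \cite{Kow:BVASOCAPB}), and its cohomology with the cup product is the Gerstenhaber algebra $H^\bull(U,N)$. For (b), I would take $\bullet_i$ with $i\ge 1$ to be the insertion of a cochain into the $i$-th block of tensor legs of a chain, using the coproduct of $U$ to distribute, the left $U$-action to evaluate, and the algebra structure of $N$ to absorb the resulting value into the coefficient leg $M\otimes_\Aopp N$. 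The two genuinely new ingredients, $\bullet_0$ and $t$, are defined with the help of the translation map (the inverse Hopf-Galois map) $U\to U\otimes_\Aopp U$ characterising a \emph{left Hopf} algebroid; it is exactly here, as anticipated in the introduction, that passing from a bialgebroid to a left Hopf algebroid is unavoidable, since $\bullet_0$ and $t$ must move a tensor leg ``around the corner'' and hence invert the Galois map.

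The verification in (c) is where the coefficient hypotheses enter. The anti Yetter-Drinfel'd condition on $M$---and hence on $M\otimes_\Aopp N$, which inherits it because $N$ is a braided commutative Yetter-Drinfel'd algebra---is exactly what renders $t$ well defined over the various balanced tensor products, so that $C_\bull(U,M\otimes_\Aopp N)$ is a para-cyclic object; stability of $M\otimes_\Aopp N$ then upgrades this to $t^{n+1}=\id$, yielding a genuine cyclic, and thus cyclic unital comp, module. I expect the main obstacle to be precisely the relations $t(\gvf\bullet_i x)=\gvf\bullet_{i+1}t(x)$ together with the comp module axioms for $\bullet_0$: both require pushing the translation map past the coproduct and the action and then collapsing the outcome by the Hopf-algebroid axioms and the (anti) Yetter-Drinfel'd compatibility, all of this performed compatibly with the balancing over $A$ and over $\Aop$ and with the braided commutativity of $N$. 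These are the computations to be carried out in \S\ref{castropretorio3}; they form the technical heart, the remaining relations being the standard simplicial identities already present on the bialgebroid level as in \cite{KowKra:CSIACT}.

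Granting (a)--(c), Proposition \ref{pomezia} yields the cyclic $k$-module, Proposition \ref{estintore} and Theorems \ref{feinefuellhaltertinte} and \ref{waterbasedvarnish} give the Gerstenhaber module structure, and the homotopy formula of \S\ref{lumograph} supplies the operator $B$ and the Cartan-Rinehart identity, so that $\big(H^\bull(U,N),H_\bull(U,M\otimes_\Aopp N)\big)$ is a noncommutative differential calculus in the sense of Definition \ref{golfoaranci}. Finally, when $U$ is right $A$-projective the complex $C^\bull(U,N)$ computes $\Ext^\bull_\uhhu(A,N)$ and $C_\bull(U,M\otimes_\Aopp N)$ computes $\Tor^\uhhu_\bull(M,N)$, by the usual identification of these bar-type complexes with projective (co)resolutions; since all operators descend to (co)homology, the calculus transports to the pair $\big(\Ext^\bull_\uhhu(A,N),\Tor^\uhhu_\bull(M,N)\big)$, which is the final assertion.
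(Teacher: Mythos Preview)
Your proposal is correct and follows essentially the same strategy as the paper: establish that $C^\bull(U,N)$ is an operad with multiplication (quoted from \cite{Kow:BVASOCAPB}), equip $C_\bull(U,M\otimes_\Aopp N)$ with explicit comp module maps $\bullet_i$ ($i\ge 0$) and a cyclic operator $t$ built from the translation map, verify the axioms of Definition \ref{piantadiroma}, and then invoke the abstract results of \S\S\ref{ilpattodifamiglia}--\ref{lumograph} together with the $\Ext$/$\Tor$ identification under projectivity.

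One point to sharpen: you locate the role of the anti Yetter-Drinfel'd condition on $M$ in the well-definedness of $t$ and hence in the para-cyclic structure, but the paper's computations for \rmref{SchlesischeStr}--\rmref{lagrandebellezza1} use only that $M$ is simultaneously a right $U$-module and a left $U$-comodule with compatible $\Ae$-structures (see the remark following Corollary \ref{dieda}); the aYD compatibility \rmref{huhomezone2} enters solely through Lemma \ref{altemps}, which shows that $M\otimes_\Aopp N$ is again aYD, after which \cite[Thm.~4.1]{KowKra:CSIACT} combined with stability yields $t^{n+1}=\id$. In other words, aYD is needed for cyclicity, not for para-cyclicity. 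This does not affect the validity of your outline, only the bookkeeping of where each hypothesis is consumed.
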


Again we refer to the main text in \S\ref{castropretorio3} for the necessary (and numerous) details in notation and terminology to understand this statement.

\bigskip

\thanks{ {\bf Acknowledgements.}   \!
I would like to thank Paolo Salvatore for helpful comments and discussions on operads and the referee for careful reading and suggestions. 
This research was funded by an INdAM-COFUND Marie Curie grant.

\section{Preliminaries}

Throughout the text, let $k$ be a commutative ring. As usual, an unadorned tensor product is meant to be over $k$.

\subsection{Gerstenhaber algebras and Batalin-Vilkovisky modules}

In this section, we explain the main objects of interest, that of Gerstenhaber algebras and Gerstenhaber modules, which, if equipped with some extra structure, lead to the notion of Batalin-Vilkovisky modules:

\begin{definition}
\label{golfoaranci}
\indent
\begin{enumerate}
\compactlist{99}
\item
A {\em Gerstenhaber algebra} (or {\em graded Poisson algebra})
over a commutative ring $k$ consists of a graded commutative $k$-algebra
$(V,\smallsmile)$,  
$$
        V=\bigoplus_{p \in \mathbb{N}} V^p,\quad
        \gvf \smallsmile \psi=(-1)^{pq}\psi \smallsmile \gvf
        \in V^{p+q},\quad 
        \gvf \in V^p,\psi \in V^q,
$$ 
equipped with a graded Lie bracket 
$
        \{\cdot,\cdot\} : V^{p+1} \otimes V^{q+1} \rightarrow V^{p+q+1}
$ 
on the \emph{desuspension} 
$$
        V[1]:=\bigoplus_{p \in \mathbb{N}} V^{p+1}
$$
of $V$, for which all operators of the form $\{\chi,\cdot\}$ satisfy 
$$
        \{\chi,\gvf \smallsmile \psi\}=
        \{\chi,\gvf\} \smallsmile \psi + (-1)^{pq} \gvf \smallsmile
        \{\chi,\psi\},\quad
        \chi \in V^{p+1},\gvf \in V^q,
$$
that is, the {\em graded Leibniz rule}.
\item 
A {\em Gerstenhaber module} over  
$V$ is a graded $(V,\smallsmile)$-module 
$(\Omega,\smallfrown)$,
$$
        \Omega =\bigoplus_{n \in \mathbb{N}} \Omega_n,\quad
        \gvf \smallfrown x \in \Omega_{n-p},\quad
        \gvf \in V^p,x \in \Omega_n,
$$
with a graded Lie algebra module structure over
$(V[1],\{\cdot,\cdot\})$ 
$$
        \lie : V^{p+1} \otimes \Omega_n \rightarrow \Omega_{n-p},\quad
        \gvf \otimes x \mapsto \lie_\gvf (x), 
$$
which satisfies for 
$\gvf \in V^{p+1},\psi \in V^q, x \in \Omega$ 
$$
 \psi \smallfrown \lie_\gvf(x) =  \{\psi,\gvf\} \smallfrown  x +(-1)^{pq} \lie_\gvf(\psi \smallfrown x),
$$ 
that is, the {\em mixed Leibniz rule}.
\item 
Such a module  
is called {\em Batalin-Vilkovisky} module if one has a $k$-linear differential
$$
        \BB : \Omega_n \rightarrow \Omega_{n+1},\quad \BB  \BB=0,
$$
such that $\lie_\gvf$ for $\gvf \in V^p$ can be expressed as 
$$
        \lie_\gvf(x) =\BB(\gvf \smallfrown x)-(-1)^p \gvf \smallfrown 
        \BB(x), 
$$
that is, the {\em (Cartan-Rinehart) homotopy formula}.
\item 
A pair $(V,\Omega)$ 
of a Gerstenhaber algebra and of a Batalin-Vilkovisky module 
over it is called a {\em (noncommutative) differential calculus}.
\end{enumerate}
\end{definition}

Observe that the $(V, \smallsmile)$-module $(\gO, \smallfrown)$ in the second paragraph is actually a graded module in {\em negative} degrees as $\gvf \smallfrown -$ lowers degrees by $p$; since this is the only situation we shall 
deal with we do not consider this sloppiness in terminology as a major problem. The first two paragraphs in the above definition define what was called a {\em precalculus} in \cite[Def.~4.3]{Tsy:CH}, and adding the structure in (iii) to a precalculus then makes a {\em calculus}. We would also like to point to the short discussion after Definition 1.1 in \cite{KowKra:BVSOEAT} on various conventions regarding variants of the above definition and terminology.


\pagebreak

\subsection{Operads and comp modules} 
\label{responsabilitacivile}
\subsubsection{Operads with multiplication}
In this subsection, we gather some well-known material about operads; for deeper information see, for example, \cite{Fre:MOOAF, GerSch:ABQGAAD, LodVal:AO, Mar:MFO, Mar:SAAC, MarShnSta:OIATAP, Men:BVAACCOHA}, 
and references therein. 
Here, we essentially mention only the notion of nonsymmetric operads with multiplication and certain modules over them, as well as their connection to Gerstenhaber algebras in Theorem \ref{customerscopy}:

\begin{definition}
\label{moleskine}
A {\em non-$\gS$ operad} $\cO$ in the category 
of $k$-modules is a sequence $\{\cO(n)\}_{n \geq 0}$ of $k$-modules 
with $k$-bilinear operations $\circ_i: \cO(p) \otimes \cO(q) \to \cO({p+q-1})$, sometimes called {\em comp maps} or {\em Gerstenhaber products} or still {\em partial composites}, along with an identity $\mathbb{1} \in \cO(1)$, such that
\begin{eqnarray}
\label{danton}
\nonumber
\gvf \circ_i \psi &=& 0 \qquad \qquad \qquad \qquad \qquad \! \mbox{if} \ p < i \quad \mbox{or} \quad p = 0, \\
(\varphi \circ_i \psi) \circ_j \chi &=& 
\begin{cases}
(\varphi \circ_j \chi) \circ_{i+r-1} \psi \qquad \mbox{if} \  \, j < i, \\
\varphi \circ_i (\psi \circ_{j-i +1} \chi) \qquad \hspace*{1pt} \mbox{if} \ \, i \leq j < q + i, \\
(\varphi \circ_{j-q+1} \chi) \circ_{i} \psi \qquad \mbox{if} \ \, j \geq q + i,
\end{cases} \\
\nonumber
\gvf \circ_i \mathbb{1} &=& \mathbb{1} \circ_1 \gvf \ \ = \ \ \gvf \! \qquad \quad \qquad \mbox{for} \ i \leq p   
\end{eqnarray}
holds true for any $\varphi \in \cO(p), \, \psi \in \cO(q)$, and $\chi \in \cO(r)$. 
The operad is called {\em operad with multiplication} if there exists a {\em distinguished element} or {\em operad multiplication} $\mu \in \cO(2)$ along with an element $e \in \cO(0)$, the {\em unit}, such that
\begin{eqnarray}
\label{distinguished1}
\mu \circ_1 \mu &=& \mu \circ_2 \mu, \\
\label{distinguished2}
\mu \circ_1 e &=& 
\mu \circ_2 e
\ \ = \ \ \mathbb{1} 
\end{eqnarray}
is fulfilled.
\end{definition}
In what follows, {\em operad} will always mean non-$\gS$ operad in the category of $k$-modules as in the above definition.

An operad with multiplication connects to the notion of Gerstenhaber algebra 
by the following well-known result \cite{Ger:TCSOAAR, GerSch:ABQGAAD, McCSmi:ASODHCC}:

\begin{theorem}
\label{customerscopy}
Each operad with multiplication defines a cosimplicial $k$-module. Its cohomology forms a Gerstenhaber algebra.
\end{theorem}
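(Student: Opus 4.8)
The plan is to establish the two assertions in turn, following the classical route of Gerstenhaber and of McClure--Smith.

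First I would produce the cosimplicial $k$-module. Setting $\cO^n := \cO(n)$, I use the multiplication $\mu \in \cO(2)$ to define the coface maps $\delta^i \colon \cO(n) \to \cO(n+1)$ by $\delta^0 \varphi = \mu \circ_2 \varphi$, $\delta^i \varphi = \varphi \circ_i \mu$ for $1 \le i \le n$, and $\delta^{n+1}\varphi = \mu \circ_1 \varphi$, and the unit $e \in \cO(0)$ to define the codegeneracies $\sigma^i \colon \cO(n) \to \cO(n-1)$ by $\sigma^i \varphi = \varphi \circ_{i+1} e$ for $0 \le i \le n-1$. The verification of the cosimplicial identities is then a direct bookkeeping exercise: the relations among the $\delta^i$ and $\sigma^i$ reduce case by case to the three branches of the associativity axiom \rmref{danton}, the boundary cases coupling $\delta^0$ and $\delta^{n+1}$ are exactly where the multiplicativity relation $\mu \circ_1 \mu = \mu \circ_2 \mu$ of \rmref{distinguished1} enters, and the identities mixing faces with degeneracies invoke $\mu \circ_1 e = \mu \circ_2 e = \mathbb{1}$ from \rmref{distinguished2} together with the unit axiom $\varphi \circ_i \mathbb{1} = \varphi$. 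Setting $\delta := \sum_{i=0}^{n+1}(-1)^i \delta^i$ then yields $\delta^2 = 0$, so that $(\cO^\bullet, \delta)$ is a cochain complex and $H^\bullet(\cO)$ is defined.

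For the Gerstenhaber structure I would first introduce the relevant operations on the cochain level. The cup product of $\varphi \in \cO(p)$ and $\psi \in \cO(q)$ is $\varphi \smallsmile \psi := (\mu \circ_1 \varphi) \circ_{p+1} \psi \in \cO(p+q)$, and the Gerstenhaber (brace) composite is $\varphi \,\bar\circ\, \psi := \sum_{i=1}^{p} (-1)^{(q-1)(i-1)} \varphi \circ_i \psi \in \cO(p+q-1)$, from which the bracket is built as $\{\varphi,\psi\} := \varphi \,\bar\circ\, \psi - (-1)^{(p-1)(q-1)} \psi \,\bar\circ\, \varphi$. The single most important algebraic input is that $\bar\circ$ satisfies the right graded pre-Lie identity; this follows purely formally by expanding $(\varphi \,\bar\circ\, \psi)\,\bar\circ\,\chi - \varphi\,\bar\circ\,(\psi\,\bar\circ\,\chi)$ and sorting the double sum into the nested terms ($i \le j < q+i$) and the disjoint terms ($j<i$ or $j \ge q+i$) dictated by \rmref{danton}; the disjoint terms are symmetric in $\psi$ and $\chi$ while the nested terms assemble into the pre-Lie associator. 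A graded pre-Lie product yields a graded Lie bracket automatically, so $\{.,.\}$ satisfies graded antisymmetry and the graded Jacobi identity already on cochains. The bridge to the differential is the identity $\delta \varphi = -\{\mu, \varphi\}$, read off by comparing the defining formulas for the $\delta^i$ with the insertion terms of $\mu \,\bar\circ\, \varphi$ and $\varphi \,\bar\circ\, \mu$. Since $\{\mu,\mu\} = 2(\mu\circ_1\mu - \mu\circ_2\mu) = 0$ by \rmref{distinguished1}, the element $\mu$ is Maurer--Cartan, so by graded Jacobi $\{\mu,\{\mu,-\}\}=0$, reproving $\delta^2=0$ conceptually, and $\delta = \{\mu,-\}$ is a graded derivation of the bracket; hence $\{.,.\}$ descends to a well-defined graded Lie bracket on $H^\bullet(\cO)$. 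A parallel standard computation shows that $\delta$ is a graded derivation of $\smallsmile$ and that $\smallsmile$ is associative on the nose, so the cup product descends to an associative product on $H^\bullet(\cO)$.

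What remains --- and what I expect to be the main obstacle --- is to establish on cohomology the two properties that fail at cochain level: graded commutativity of $\smallsmile$ and the graded Leibniz rule relating $\{.,.\}$ and $\smallsmile$. Both hold only up to explicit homotopies built from $\bar\circ$. For commutativity I would prove Gerstenhaber's formula, of the shape $\varphi \smallsmile \psi - (-1)^{pq}\,\psi \smallsmile \varphi = \pm\big(\delta(\varphi\,\bar\circ\,\psi) - (\delta\varphi)\,\bar\circ\,\psi - (-1)^{p-1}\varphi\,\bar\circ\,(\delta\psi)\big)$, exhibiting the graded commutator of the cup product as a coboundary whenever $\varphi$ and $\psi$ are cocycles; the Leibniz rule follows from an analogous identity expressing $\{\varphi, \psi\smallsmile\chi\} - \{\varphi,\psi\}\smallsmile\chi - (-1)^{\,(p-1)q}\psi\smallsmile\{\varphi,\chi\}$ as a coboundary. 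Verifying these requires tracking every insertion position and the attendant Koszul signs through the three branches of \rmref{danton}, and it is precisely this sign-and-index bookkeeping, rather than any conceptual difficulty, that constitutes the bulk of the work; the conclusion is that $(H^\bullet(\cO), \smallsmile, \{.,.\})$ is a Gerstenhaber algebra.
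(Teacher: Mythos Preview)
Your proposal is correct and follows precisely the classical Gerstenhaber route that the paper cites; note that the paper does not supply its own proof of this theorem but merely records the structure maps and refers to \cite{Ger:TCSOAAR, GerSch:ABQGAAD, McCSmi:ASODHCC}. Two convention discrepancies with the paper are worth flagging: the paper defines the cup product as $\varphi \smallsmile \psi := (\mu \circ_2 \varphi) \circ_1 \psi$, explicitly the \emph{opposite} of Gerstenhaber's original (and of yours), and it writes $\delta\varphi = \{\mu,\varphi\}$ without the minus sign you use (in fact with your cofaces one gets $\delta\varphi = (-1)^{n+1}\{\mu,\varphi\}$, so your claimed identity $\delta = -\{\mu,-\}$ is off by a degree-dependent sign, though this does not affect the argument).
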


The structure maps in the explicit construction of this Gerstenhaber structure read as follows:
for any two cochains $\varphi \in \cO(p),\psi \in \cO(q)$, set
\begin{equation*}
        \varphi \bar\circ \psi := 
\sum^{p}_{i=1}
        (-1)^{(q-1)(i-1)} \varphi \circ_i \psi \in \cO({p+q-1}), 
\end{equation*}
and define their \emph{Gerstenhaber bracket} by
\begin{equation}
\label{zugangskarte}
{\{} \varphi,\psi \}
:= \varphi \bar\circ \psi - (-1)^{(p-1)(q-1)} \psi \bar\circ \varphi.
\end{equation}
The pertinent graded commutative product is given by the {\em cup product}
\begin{equation}
\label{cupco}
        \varphi \smallsmile \psi := 
        (\mu \circ_2 \gvf) \circ_1 \psi \in \cO({p+q}).
\end{equation}
Observe that we deploy here a convention which is opposite to the cup product originally introduced in \cite{Ger:TCSOAAR}.

\noindent Finally, the coboundary of the cosimplicial $k$-module (denoted $\cO^\bull$, where $\cO^n := \cO(n)$) 
results as 
\begin{equation}
\label{erfurt}
        \delta \varphi = \{\mu,  \varphi\},
\end{equation}
and then the triple $(\cO^\bull, \gd, \smallsmile)$ forms a dga (differential graded associative) algebra.

Theorem \ref{customerscopy} will be a crucial ingredient in the next section. A stronger variant is a similar 
relation between Batalin-Vilkovisky {\em algebras} and {\em cyclic} operads \cite{GetKap:COACH} (with multiplication) established 
in \cite{Men:BVAACCOHA}, which is, however, not within the direct scope of this article.

\subsubsection{Comp modules over operads} 
Our guiding example in later considerations is to consider tensor chains with the endomorphism operad acting on it in a certain way (see Example \ref{schnee1} below), which essentially leads to what we called {\em comp module} in \cite[\S4.2]{KowKra:BVSOEAT} and which is the structure we need throughout:



\begin{definition}
\label{molck}
A {\em (left) comp module $\cM$ 
over an operad $\cO$} (or an {\em opposite $\cO$-module}) is
a sequence of $k$-modules $\cM = \{ \cM(n) \}_{n \geq 0}$ together with $k$-linear
operations, sometimes called {\em comp module maps}, 
$$
        \bullet_i : 
        \cO(p) \otimes \cM(n) \to \cM({n-p+1}), \quad \mbox{for} \ 
i = 1, \ldots, n- p +1, \quad 0 \leq p \leq n, 
$$
declared to be zero if $p > n$, and 
satisfying
for $\gvf \in \cO(p)$, $\psi \in \cO(q)$, and $x \in
\cM(n)$ the identities  
\begin{equation}
\label{TchlesischeStr}
\gvf \bullet_i \big(\psi \bullet_j x\big) = 
\begin{cases} 
\psi \bullet_j \big(\gvf \bullet_{i + q - 1}  x\big) \quad & \mbox{if} \ j < i, 
\\
(\gvf \circ_{j-i+1} \psi) \bullet_{i}  x \quad & \mbox{if} \ j - p < i \leq j, \\
\psi \bullet_{j-p + 1} \big(\gvf \bullet_{i}  x\big) \quad & \mbox{if} \ 1 \leq i \leq j - p,
\end{cases}
\end{equation}
where $p > 0$, $q \geq 0$, $ n \geq 0$. 
In case $p=0$, the index $i$ runs from $1$ to $n+1$, and the above relations need to be read as
 \begin{equation}
\label{TchlesischeStr0}
\gvf \bullet_i \big(\psi \bullet_j x\big) = 
\begin{cases} 
\psi \bullet_j \big(\gvf \bullet_{i + q - 1}  x\big) \quad & \mbox{if} \ j < i, 
\\
\psi \bullet_{j+1} \big(\gvf \bullet_{i}  x\big) \quad & \mbox{if} \ 1 \leq i \leq j.
\end{cases}
\end{equation}
A comp module over $\cO$ is called {\em unital} if  
 \begin{equation}
\label{TchlesischeStr-1}
\mathbb{1} \bullet_i x = x, \quad \mbox{for} \ i = 1, \ldots, n,
\end{equation}
for all $x \in \cM(n)$.
\end{definition}

\begin{example}
\label{schnee1}
Let $X$ be a $k$-module and $\mathcal{E}\hspace*{-1pt}{nd}_\ikks$ be the endomorphism operad defined by  $\mathcal{E}\hspace*{-1pt}{nd}_\ikks(p) := \Hom(X^{\otimes p}, X)$ with identity element $\mathbb{1} := \id_\ikks$. A unital comp module over this operad is defined by $\cM_\ikks(n) := X^{\otimes n +1 }$ along with
comp module maps given, for  $i= 1, \ldots, n -p +1$, by
$$
\gvf \bullet_i (x_0, \ldots, x_n) := (x_0, \ldots, x_{i-1}, \gvf(x_{i}, \ldots, x_{i+p-1}), x_{i+p}, \ldots, x_n),
$$
where $\gvf \in \mathcal{E}\hspace*{-1pt}{nd}_\ikks(p)$ and $x := (x_0, \ldots, x_n) := x_0 \otimes \cdots \otimes x_n \in \cM_\ikks(n)$. The case where $X$ is a $k$-algebra, {\em i.e.}, when  $\mathcal{E}\hspace*{-1pt}{nd}_\ikks$ becomes an operad with multiplication, is discussed at length in \S\ref{castropretorio1}.
\end{example}

\begin{rem}
As is clear from the preceding example, a picture that one could give 
for illustrating comp modules looks like:

\begin{center}
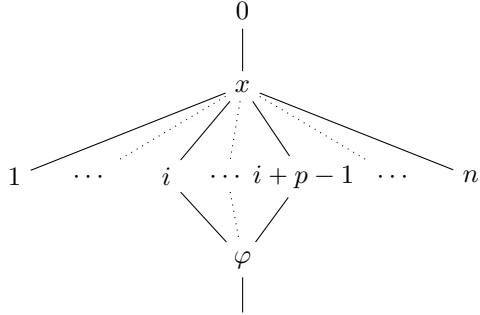

\begin{tikzpicture}
\node(0) at (0, 0) {$0$};
\node(x) at (0,-1) {$x$};
\node(1) at (-3,-2.2) {$1$};
\node(2) at (-2,-2.2) {$\cdots$};
\node(i) at (-1,-2.2) {$i$};
\node(4) at (-0.2,-2.2) {$\cdots$};
\node(i+p-1) at (0.8,-2.2) {$i+p-1$};
\node(6) at (2,-2.2) {$\cdots$};
\node(n) at (3,-2.2) {$n$};
\node(phi) at (0,-3.3) {$\varphi$};

\draw(x)--(1);
\draw[dotted](x)--(2);
\draw(x)--(i);
\draw[dotted](x)--(4);
\draw(x)--(i+p-1);
\draw[dotted](x)--(6);
\draw(x)--(n);
\draw(i)--(phi);
\draw[dotted](4)--(phi);
\draw(i+p-1)--(phi);
\draw(0)--(x);

\draw(phi)--(0,-4);
\end{tikzpicture}
\captionof{figure}{Comp modules}\label{tratto}
\end{center}
Motivated by this picture, one might want to call a comp module over an operad $\cO$ also an {\em opposite $\cO$-module}.
\end{rem}

\begin{rem}
We call such an object a {\em left} comp module since if one defines 
$\lambda_\gvf x := \gvf \bar \bullet x := \sum^{n-p+1}_{i=1} (-1)^{(p-1)(i-1)} \gvf \bullet_i x$, then one easily checks that $\lambda_\gvf \lambda_\psi x - (-1)^{(p-1)(q-1)} \lambda_\psi \lambda_\gvf x = \lambda_{\{\gvf, \psi\}} x$. By a similar argumentation, an operad as determined by the identities in \rmref{danton} can be shown to be a {\em right} module over itself (in the standard sense of a module over an operad \cite{Mar:MFO, Mar:SAAC}), which is why they were called {\em right comp algebras} in \cite[p.~63]{GerSch:ABQGAAD}.
\end{rem}

\begin{rem}
\label{infoutili}
Observe that, in contrast to the standard definitions for modules over operads (see \cite{Mar:MFO, Mar:SAAC}), 
we explicitly allow operations for the extreme cases $p = 0$ (acting on elements in $\cM$ for any $n$) or $n = 0$ (in which case, by definition, only elements in $\cO(0)$, {\em i.e.}, for $p=0$ have a non-vanishing outcome). Of course, the middle line in \rmref{TchlesischeStr} can also be read
from right to left so as to understand how an element $\gvf \circ_j
\psi$ acts on $\cM$ via $\bullet_i$. Also, giving both the first and the third line in \rmref{TchlesischeStr} is, as in \rmref{danton}, actually redundant but convenient to have at hand in explicit computations.
\end{rem}

\section{Cyclic comp modules over operads and their homology}
\label{ilpattodifamiglia}
With the preparations of the preceding section, we can now introduce the principal notion that implies the subsequent structures and results. 

\begin{definition}
\label{piantadiroma}
A {\em para-cyclic (unital, left) comp module} 
over an operad $\cO$ is a (unital, left) comp module $\cM$ over $\cO$ as in Definition \ref{molck} equipped with two additional structures:
first, an {\em extra} ($k$-linear) comp module map
$$
\bullet_0: \cO(p) \otimes \cM(n) \to \cM({n-p+1}), \quad 0 \leq p \leq n+1,
$$
declared to be zero if $p > n+1$, 
and such that the relations
\rmref{TchlesischeStr}--\rmref{TchlesischeStr-1} are fulfilled for $i=0$ as well, 
{\em i.e.}, 
for $\gvf \in \cO(p)$, $\psi \in \cO(q)$, and $x \in \cM(n)$, with $q \geq 0$, $n \geq 0$, 
\begin{eqnarray}
\label{SchlesischeStr}
\gvf \bullet_i \big(\psi \bullet_j x\big) \!\!\!\!&=\!\!\!\!& 
\begin{cases} 
\psi \bullet_j \big(\gvf \bullet_{i + q - 1}  x\big)  & \mbox{if} \ j < i, 
\\
(\gvf \circ_{j-i+1} \psi) \bullet_{i}  x  & \mbox{if} \ j - p < i \leq j, \\
\psi \bullet_{j-p + 1} \big(\gvf \bullet_{i}  x\big)  & \mbox{if} \ 0 \leq i \leq j - p,
\end{cases} \qquad \mbox{(cases for $p > 0$)\qqquad}
\\
\label{SchlesischeStr0}
\gvf \bullet_i \big(\psi \bullet_j x\big) \!\!\!\!&=\!\!\!\!& 
\begin{cases} 
\psi \bullet_j \big(\gvf \bullet_{i + q - 1}  x\big)  & \mbox{if} \ j < i, 
\\
\psi \bullet_{j+1} \big(\gvf \bullet_{i}  x\big)  & \mbox{if} \ 0 \leq i \leq j,
\end{cases}
\qqquad\quad  \mbox{(cases for $p = 0$)}
\\
\label{SchlesischeStr-1}
\mathbb{1} \bullet_i x \!\!\!\!&=\!\!\!\!&  x \hspace*{3cm} \, \mbox{for \ } i = 0, \ldots, n,
\end{eqnarray}
and second, a morphism $t: \cM(n) \to \cM(n)$ for all $ n \geq 1$ such that
\begin{equation}
\label{lagrandebellezza1}
t(\gvf \bullet_{i} x) = \gvf \bullet_{i+1} t(x),  \qquad i = 0, \ldots, n-p,
\end{equation}
holds for $\gvf \in \cO(p)$ and $x \in \cM(n)$. 
A para-cyclic comp module over $\cO$ is called {\em cyclic} if
\begin{equation}
\label{lagrandebellezza2}
t^{n+1} = \id
\end{equation}
is true on $\cM(n)$.
\end{definition}

\begin{example}
\label{schnee2}
In the situation of Example \ref{schnee1}, the structure of a cyclic unital comp module on $\cM_\ikks$ over  $\mathcal{E}\hspace*{-1pt}{nd}_\ikks$ is simply given by
$
\gvf \bullet_0 (x_0, \ldots, x_n) := \big(\gvf(x_0, \ldots, x_{p-1}), x_p, \ldots, x_n\big),
$
along with 
$
t(x_0, \ldots, x_n) := (x_n, x_0, \ldots, x_{n-1}).
$

More (and more complicated) examples will be seen in \S\ref{examples}, and this particular one will be enhanced in \S\ref{castropretorio1} in case $X$ is a $k$-algebra.
\end{example}

\begin{rem}
Examining Figure \ref{tratto} in the context of Example \ref{schnee2} again, one could interpret $t$ analogously to the ``rotating'' operator of cyclic operads, which transforms the input of $x$ into the first output and the last output into the input.
The condition \rmref{lagrandebellezza1} can then be understood by a picture similar to those describing cyclic operads:

\begin{center}
\begin{tikzpicture}
\node(0) at (-6, -3) {$0$};
\node(x) at (-4,-1.4) {$x$};
\node(2) at (-5.5,-3) {$\cdots$};
\node(i) at (-5,-3) {$i$};
\node(4) at (-4.6,-3) {$\cdots$};
\node(i+p-1) at (-3.7,-3) {$i+p-1$};
\node(6) at (-2.7,-3) {$\cdots$};
\node(n-1) at (-2,-3) {$n-1$};
\node(n) at (-4,0) {$n$};
\node(phi) at (-4.5,-4.3) {$\varphi$};

\draw[dotted](x)--(2);
\draw(x)--(i);
\draw[dotted](x)--(4);
\draw(x)--(i+p-1);
\draw[dotted](x)--(6);
\draw(x)--(n-1);
\draw(i)--(phi);
\draw[dotted](4)--(phi);
\draw(i+p-1)--(phi);

\draw(phi)--(-4.5,-5.3);




\draw  (-3.7,-1.5) to [out=330,in=270] (-1.5,-2) to [out=90,in=270] (-4,-0.2) ;
\draw (x) to [out=90, in=38] (-5.8, -2.8);

\node(0) at (1, -3) {$0$};
\node(x) at (3,-1.4) {$x$};
\node(2) at (1.5,-3) {$\cdots$};
\node(i+1) at (2.2,-3) {$i+1$};
\node(4) at (2.85,-3) {$\cdots$};
\node(i+p) at (3.5,-3) {$i+p$};
\node(6) at (4.3,-3) {$\cdots$};
\node(n-1) at (5,-3) {$n-1$};
\node(n) at (3,0) {$n$};
\node(phi) at (2.8,-4.3) {$\varphi$};
\node(=) at (-0.3, -3) {$=$};

\draw[dotted](x)--(2);
\draw(x)--(i+1);
\draw[dotted](x)--(4);
\draw(x)--(i+p);
\draw[dotted](x)--(6);
\draw(x)--(n-1);
\draw(i+1)--(phi);
\draw[dotted](4)--(phi);
\draw(i+p)--(phi);

 \draw(phi)--(2.8,-5.3);



\draw  (3.3,-1.5) to [out=330,in=270] (5.5,-2) to [out=90,in=270] (3,-0.2) ;
\draw (x) to [out=90, in=38] (1.2, -2.8);

\draw [thick, rounded corners] (-6.3, -0.5) rectangle (-1.2, -4.8);
\draw [thick, rounded corners] (0.7, -0.5) rectangle (5.8, -3.7);

 \end{tikzpicture}
 
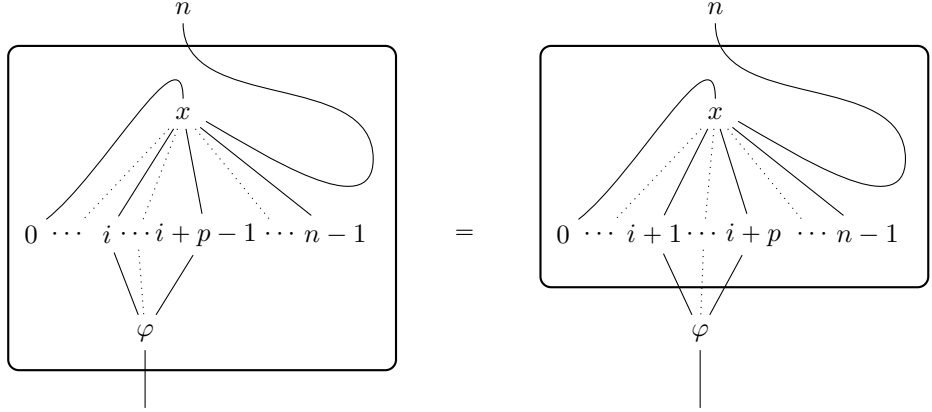
\captionof{figure}{The relation $t(\gvf \bullet_{i} x) = \gvf \bullet_{i+1} t(x)$ for cyclic comp modules}\label{rel}
 \end{center}
\end{rem}

\begin{rem}
We want to underline that, in contrast to the comp module maps $\bullet_i$ for $i=1, \ldots, n -p +1$, the operation $\bullet_0$ is also defined in case $p=n+1$. However note that in this case the relation \rmref{lagrandebellezza1} is not fulfilled as the right-hand side would not even be defined.
Further,
similarly to what was said in Remark \ref{infoutili}, one observes that elements in $\cO(0)$, {\em i.e.}, for $p=0$, are allowed to act on elements in $\cM(n)$ for any $n \geq 0$, whereas for elements in $\cM(0)$ only the action by elements in $\cO(0)$ and $\cO(1)$, that is, for $p=\{0,1\}$ has nonzero results.
\end{rem}

The terminology of Definition \ref{piantadiroma} is partially justified by the following proposition
and Remark \ref{narni}:

\begin{prop}
\label{sabaudia}
Let $(\cO, \mu)$ be an operad with multiplication. 
A cyclic
unital comp module over $\cO$ defines a cyclic $k$-module with cyclic operator $t: \cM(n) \to \cM(n)$, along with faces $d_i: \cM(n) \to \cM({n-1})$ and degeneracies $s_j: \cM(n) \to \cM({n+1})$ of the underlying simplicial object given by 
\begin{equation}
\label{colleoppio}
\begin{array}{rcll}
d_i(x) & = & \mu \bullet_{i} x, & i = 0, \ldots, n-1, \\
d_n(x) & = & \mu \bullet_0 t (x), & \\
s_j (x)& = & e \bullet_{j+1} x, & j = 0, \ldots, n, \\
\end{array}
\end{equation}
where $x \in \cM(n)$. 
\end{prop}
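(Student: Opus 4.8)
The plan is to verify that the operators $d_i$, $s_j$ and $t$ of \rmref{colleoppio} satisfy the defining relations of a cyclic $k$-module, namely the simplicial identities together with the cyclic compatibilities $d_0 t = d_n$, the interior relations $d_i t = t\, d_{i-1}$ and $s_i t = t\, s_{i-1}$, the two ``wrap-around'' relations $d_n t = t\, d_{n-1}$ and $s_0 t = t^2 s_n$, and $t^{n+1}=\id$. I would sort these into three groups according to the structural input each requires.

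First, the purely simplicial identities among the \emph{interior} faces $d_0,\dots,d_{n-1}=\mu\bullet_0,\dots,\mu\bullet_{n-1}$ and the degeneracies $s_j=e\bullet_{j+1}$. Each is a direct instance of the comp-module relations \rmref{SchlesischeStr}--\rmref{SchlesischeStr0}. For the face-face identity $d_i d_j = d_{j-1}d_i$ with $i<j\le n-1$, the coincidence $i\le j-2$ falls into the (commuting) third case of \rmref{SchlesischeStr} and reproduces $d_{j-1}d_i$ on the nose, while the boundary coincidence $i=j-1$ lands in the middle case and yields $(\mu\circ_2\mu)\bullet_{j-1}x$ on one side and $(\mu\circ_1\mu)\bullet_{j-1}x$ on the other; these agree precisely by the associativity \rmref{distinguished1} of the operad multiplication. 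The degeneracy-degeneracy identities and the relations $d_js_j=d_{j+1}s_j=\id$ are handled the same way, the latter collapsing through the middle case of \rmref{SchlesischeStr} to $\mathbb{1}\bullet_i x$ via the unit relations \rmref{distinguished2}, and then to $x$ by unitality \rmref{SchlesischeStr-1}.

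Second, the cyclic compatibilities in the admissible range. The relation $d_0 t = d_n$ is immediate from the very definition $d_n=\mu\bullet_0 t$ in \rmref{colleoppio}. For $1\le i\le n-1$ the identity $d_i t=t\,d_{i-1}$ is read off from \rmref{lagrandebellezza1} with $\gvf=\mu$, since the index $i-1$ then lies in the admissible range $0,\dots,n-2=n-p$; likewise $s_i t=t\,s_{i-1}$ for $1\le i\le n$ follows from \rmref{lagrandebellezza1} with $\gvf=e$, where $p=0$ renders all indices $0,\dots,n$ admissible.

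The third group, and the genuine obstacle, are the two wrap-around relations $d_n t=t\,d_{n-1}$ and $s_0 t=t^2 s_n$: these cannot be obtained from \rmref{lagrandebellezza1} directly, because they involve the \emph{top} index $n-1$ for $\mu$ (resp.\ the index $0$ for $e$ read against the top degeneracy $s_n$), which lies outside the range $i=0,\dots,n-p$ for which \rmref{lagrandebellezza1} is available. The idea is to telescope the already-established interior relations to transport the extreme face/degeneracy all the way down, and then close up using cyclicity. Concretely, iterating $d_i t=t\,d_{i-1}$ gives $d_0 = t^{-(n-1)} d_{n-1}\, t^{n-1}$ as operators $\cM(n)\to\cM(n-1)$, with $t$ read on the appropriate level and invertible by \rmref{lagrandebellezza2}; combining with $d_n=d_0 t$ yields $d_n t = t^{-(n-1)} d_{n-1}\, t^{n+1}$, whereupon \rmref{lagrandebellezza2} collapses $t^{n+1}=\id$ on $\cM(n)$ and $t^{-(n-1)}=t$ on $\cM(n-1)$, giving exactly $d_n t=t\,d_{n-1}$. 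The same telescoping of $s_i t=t\,s_{i-1}$ gives $s_0=t^{-n} s_n\, t^{n}$, hence $s_0 t=t^{-n} s_n\, t^{n+1}=t^2 s_n$, using $t^{n+1}=\id$ on $\cM(n)$ and $t^{-n}=t^2$ on $\cM(n+1)$. Once all cyclic compatibilities are in force, the remaining simplicial identities that involve the top face $d_n$ or top degeneracy $s_n$ follow formally from the interior ones by conjugating with $t$ and substituting $d_n=d_0 t$, so no further computation with \rmref{SchlesischeStr} is needed; together with $t^{n+1}=\id$ from \rmref{lagrandebellezza2} this exhibits $\cM$ as a cyclic $k$-module.
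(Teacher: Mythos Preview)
Your proof is correct and follows essentially the same route as the paper: both verify the cyclic $k$-module axioms by repeatedly invoking the comp-module relations \rmref{SchlesischeStr}--\rmref{SchlesischeStr-1}, the operad multiplication identities \rmref{distinguished1}--\rmref{distinguished2}, and the cyclic data \rmref{lagrandebellezza1}--\rmref{lagrandebellezza2}. The paper is more terse, computing a handful of representative identities directly (notably $d_{n-1}d_n=d_{n-1}d_{n-1}$ and $s_0 t = t^2 s_n$ by explicit manipulation of $\mu\bullet_0 t$ and $e\bullet_{n+1}$) and leaving the remainder to the reader.

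Your organization is a little different and arguably cleaner: you first dispose of all \emph{interior} simplicial identities via \rmref{SchlesischeStr}--\rmref{SchlesischeStr-1}, then obtain the cyclic compatibilities in the admissible index range directly from \rmref{lagrandebellezza1}, and finally recover the two wrap-around relations by telescoping the interior compatibilities and closing up with $t^{n+1}=\id$. This telescoping device replaces the paper's direct computation of $d_{n-1}d_n=d_{n-1}d_{n-1}$ (which the paper does by chasing $\mu\bullet_0 t(\mu\bullet_0 t(x))$ through \rmref{distinguished1}, \rmref{lagrandebellezza1}, \rmref{lagrandebellezza2}) and of $s_0 t = t^2 s_n$, and has the pleasant side effect that the remaining simplicial identities involving the top face then follow by conjugation with $t$ rather than by further case-by-case checks. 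One small wording point: the degeneracy $s_n=e\bullet_{n+1}$ is itself ``interior'' in your sense (it lies in the range covered by \rmref{SchlesischeStr0} and \rmref{lagrandebellezza1} for $p=0$), so the only genuinely exceptional operator is the top face $d_n$; the identities involving $s_n$ paired with interior faces already fall under your first group.
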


\begin{proof}
The proof amounts in a straightforward verification of the simplicial and cyclic identities that define a cyclic module (see, {\em e.g.}, \cite[\S2.5.1]{Lod:CH}); we only prove the (somewhat) less obvious ones. For example, that $s_is_j = s_{j+1}s_i$ for $i \leq j$ and $d_i d_j = d_{j-1}d_i$ in case $i < j-1 < n-1$ follows directly from \rmref{SchlesischeStr}, whereas the case $j = i+1 < n$ needs the property \rmref{distinguished1}:
$$
d_i d_{i+1}(x) = \mu \bullet_i ( \mu \bullet_{i+1} x) = (\mu \circ_2 \mu) \bullet_{i} x =  (\mu \circ_1 \mu) \bullet_{i} x 
=  \mu \bullet_i ( \mu \bullet_{i} x) = d_i d_i (x).
$$
The case $i = n-1, j = n$ needs not only \rmref{distinguished1} but also \rmref{lagrandebellezza1} and \rmref{lagrandebellezza2}:
\begin{equation*}
\begin{split}
d_{n-1}  d_n(x) 
&= \mu \bullet_0 t( \mu \bullet_0 t(x))  
= \mu \bullet_0 (\mu \bullet_1 t^2(x))
=   (\mu \circ_2 \mu) \bullet_0 t^2(x) \\
& =   (\mu \circ_1 \mu) \bullet_0 t^2(x)
= \mu \bullet_0 t t^{n-1} (\mu \bullet_0 t^2(x)) 
= \mu \bullet_0 t (\mu \bullet_{n-1} t^{n+1}(x)) \\
& = \mu \bullet_0 t (\mu \bullet_{n-1} x)
= d_{n-1} d_{n-1}(x). 
\end{split}
\end{equation*}
Likewise, we prove the compatibility between faces and degeneracies, as an illustration why \rmref{distinguished2} and the unitality \rmref{SchlesischeStr-1} are needed:
$$
d_i s_i(x) = \mu \bullet_i (e \bullet_{i+1} x) =  (\mu \circ_i e) \bullet_{i} x = \mathbb{1} \bullet_i x = x.
$$
In the same spirit, one proves the identities for a cyclic module. We only compute, using \rmref{lagrandebellezza1} and \rmref{lagrandebellezza2},
$$
s_0 t(x) = e \bullet_1 t(x) = t^{n+2} (e \bullet_1 t(x) )  = t^{2} (e \bullet_{n+1} t^{n+1}(x) )  = t^{2} (e \bullet_{n+1} x ) = t^2 s_n(x),  
$$
and leave the rest of the simplicial and cyclic identities to the reader. 
\end{proof}

When referring to $\cM$ as a simplicial $k$-module, we denote it by $\cM_\bull$, where $\cM_n := \cM(n)$, and shall not distinguish in notation between a simplicial module and its associated chain complex.

As usual, one defines the {\em Hochschild} or {\em simplicial boundary operator}
$
        \bb := \sum_{i=0}^n (-1)^i \dd_i.  
$
In the situation of Proposition \ref{sabaudia}, we obtain
\begin{equation}
\label{appolloni2}
        \bb := \sum_{i=0}^{n-1} (-1)^i \mu \bullet_i x + (-1)^n \mu \bullet_0 t(x),  
  \end{equation}
and, as on every para-cyclic $k$-module, we introduce the {\em norm operator}, 
the {\em extra  degeneracy},
and the \emph{cyclic differential} 
\begin{equation*}
\label{extra}
        N := \sum_{i=0}^n (-1)^{in} \ttt^i, \qquad         
        \sss_{-1} := \ttt \, \sss_n,\qquad
        \BB=(\mathrm{id}-\ttt) \, \sss_{-1} \, N,
\end{equation*}
such that $(\cM_\bull, b, B)$ forms a mixed complex. Correspondingly, we define:

\begin{definition}
\label{avviso}
For any cyclic unital comp module $\cM$ over an operad $\cO$ with multiplication, 
we call the homology of $(\cM_\bull, b)$, denoted by $H_\bull(\cM)$, its ({\em simplicial} or {\em Hochschild}) {\em homology}, and the homology of the mixed complex $(\cM_\bull, b, B)$, denoted $HC_\bull(\cM)$, the {\em cyclic homology} of $\cM$.
\end{definition}

\begin{rem}
\label{normalised}
Most of the time, we will work on the normalised complex $\bar{\cM}_\bull$
of $\cM_\bull$, meaning the quotient by the subcomplex spanned by the
images of the degeneracy maps of this simplicial 
$k$-module, that is, given by the cokernel of the degeneracy maps \hbox{$s_j := e \bullet_{j+1} -$} for $j = 0, \ldots, n$. Remember that $\BB$ coincides on the 
normalised complex 
$\bar{\cM}_\bull$ with the map (induced by) $\sss_{-1} \, N$, so we 
take the liberty to denote the latter by 
$\BB$ as well, as we, in what follows, will only consider the induced map on the
normalised complex. Similarly, $\bar{\cO}^\bull$ denotes the intersection of the kernels of the codegeneracies in the cosimplicial $k$-module $\cO^\bull$.
\end{rem}

\begin{rem}
\label{narni}
We did not call in a purely random manner the action $\bullet_0$ an ``extra'' comp module map: in view of \rmref{lagrandebellezza1}, \rmref{lagrandebellezza2}, and \rmref{colleoppio}, one computes
\begin{equation*}
\label{extra1}
e \bullet_0 x = t^{n+2}(e \bullet_0 x)  = t (e \bullet_{n+1} t^{n+1}(x))    = t (e \bullet_{n+1} x) = t s_n (x) = s_{-1}(x). 
\end{equation*}
for $x \in \cM(n)$, hence the extra degeneracy associated to the cyclic module. In this sense, the definition of the degeneracies in \rmref{colleoppio} even extends to $j = -1$. Written this way, we can express $B = s_{-1} N$ on the normalised complex as follows:
\begin{equation}
\label{extra2}
B(x) = \sum^n_{i=0} (-1)^{in} e \bullet_0 t^i(x), 
\end{equation}
where $x \in \bar{\cM}(n)$.
\end{rem}

\section{The Gerstenhaber module structure on cyclic comp modules}

In the rest of this article, we will {\em always} assume that $\cO$ be an operad with multiplication $\mu$ and $\cM$ a cyclic unital comp module over $\cO$. 

\subsection{The cap product and the Lie derivative}
\label{federtasche}
In this subsection, we define the operadic generalisation of the cap product and the Lie derivative known in various fields in mathematics, for example, in classical differential geometry of Lie algebroids or Lie-Rinehart algebras (see, for example, \cite{Car:LTDUGDLEDUEFP, Rin:DFOGCA}), or in the Hochschild theory of associative algebras (see, {\em e.g.}, \cite{CarEil:HA, Get:CHFATGMCICH, NesTsy:OTCROAA}). In a certain sense, the cap product and the Lie derivative can be seen as a dualisation of the cup product \rmref{cupco} and the Gerstenhaber bracket \rmref{zugangskarte}.

\begin{definition}
\label{volterra}
Let $(\cO, \mu)$ be an operad with multiplication $\mu \in \cO(2)$ and $\cM$ a cyclic unital comp module over $\cO$. 
\begin{enumerate}
\compactlist{99}
\item
The {\em cap product} 
$$
\iota_\gvf := \gvf \smallfrown \cdot: \cM(n) \to \cM({n-p})
$$ 
of $\varphi \in \cO(p)$ with 
$x \in \cM(n)$ is defined by
\begin{equation}
\label{alles4}
\gvf \smallfrown x := 
(\mu \circ_2 \gvf) \bullet_0 x.  
\end{equation}
\item
The {\em Lie derivative}  
$$
        \lie_\varphi : \cM(n) \rightarrow 
        \cM({n-p+1})
$$
of $x \in \cM(n)$
along $\gvf \in \cO(p)$ with $p < n+1$ is defined to be 
\begin{equation}
\label{messagedenoelauxenfantsdefrance2}
        \lie_\varphi x :=   \!\!      
\sum^{n-p+1}_{i=1} \!\! (-1)^{(p-1)(i-1)} \varphi \bullet_i x 
+ 
\sum^{p}_{i=1} (-1)^{n(i-1) + p - 1} \varphi \bullet_0 \ttt^{i-1} (x).
\end{equation}
In case $p = n+1$, this means that
$$
        \lie_\varphi x := (-1)^{p-1} \varphi \bullet_0  N(x),
$$
and for $p > n+1$, we define 
$\lie_\varphi := 0$. 
\end{enumerate}
\end{definition}

\begin{rem}
If $\gvf \in \cO(0)$ is a zero-cochain, the Lie derivative \rmref{messagedenoelauxenfantsdefrance2} along $\gvf$ on $x \in \cM(n)$ 
has to be read as
$
\lie_\gvf(x) = \sum^{n+1}_{i=1} (-1)^{i-1} \varphi \bullet_i x. 
$
\end{rem}

Compare the formal similarity of the cap product \rmref{alles4} with the cup product in \rmref{cupco}.

\subsection{The dg module structure}

Dual to the fact mentioned in \S\ref{responsabilitacivile} that $(\cO^\bull, \gd, \smallsmile)$ is a dga algebra, one has:

\begin{prop}
\label{estintore}
The triple
$(\cM_{-\bull},\bb,\smallfrown)$ defines a left dg module over 
$(\cO^\bull,\delta,\smallsmile)$, \hbox{that is},
\begin{eqnarray}
\label{mulhouse1}
        \iota_\varphi \, \iota_\psi &=&
        \iota_{\varphi \smallsmile \psi},\\
        \label{mulhouse2}
 \bb \, \iota_\varphi - (-1)^p \iota_\varphi \, \bb =:
        [\bb, \iota_\varphi] 
&=& \iota_{\gd\varphi},
\end{eqnarray} 
where $\gvf \in \cO(p)$, $\psi \in \cO$.
\end{prop}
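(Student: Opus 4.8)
Proposition \ref{estintore} splits into the multiplicativity \rmref{mulhouse1} and the Leibniz-type relation \rmref{mulhouse2}. The plan is to verify each by unfolding the cap product \rmref{alles4} into the comp-module maps, collapsing nested $\bullet_0$-operations by the relations \rmref{SchlesischeStr}--\rmref{SchlesischeStr0}, and reducing the surviving operad expressions by the axioms \rmref{danton} together with the single structural input $\mu \circ_1 \mu = \mu \circ_2 \mu$ from \rmref{distinguished1}.

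For \rmref{mulhouse1} I would first write $\iota_\varphi \iota_\psi x = (\mu \circ_2 \varphi)\bullet_0\big((\mu \circ_2 \psi)\bullet_0 x\big)$ and apply the middle case of \rmref{SchlesischeStr} with both inner and outer index equal to $0$, so that the two $\bullet_0$-operations merge into $\big((\mu\circ_2\varphi)\circ_1(\mu\circ_2\psi)\big)\bullet_0 x$. Since \rmref{cupco} gives $\iota_{\varphi \smallsmile \psi}x = \big(\mu\circ_2((\mu\circ_2\varphi)\circ_1\psi)\big)\bullet_0 x$, the identity \rmref{mulhouse1} reduces to the purely operadic equality $(\mu\circ_2\varphi)\circ_1(\mu\circ_2\psi) = \mu\circ_2\big((\mu\circ_2\varphi)\circ_1\psi\big)$. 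This I would establish by repeatedly applying the three cases of \rmref{danton} to pull the interior $\mu$ outward and using $\mu\circ_1\mu=\mu\circ_2\mu$; for $\psi\in\cO(q)$ both sides reduce to $\big(\mu\circ_2(\mu\circ_1\psi)\big)\circ_{q+2}\varphi$.

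The relation \rmref{mulhouse2} is the substantial part. I would abbreviate $\Phi:=\mu\circ_2\varphi\in\cO(p+1)$, so $\iota_\varphi=\Phi\bullet_0(-)$, and expand both $\bb\iota_\varphi x = \bb(\Phi\bullet_0 x)$ and $\iota_\varphi\bb x = \Phi\bullet_0(\bb x)$ term by term via \rmref{appolloni2}. In $\bb(\Phi\bullet_0 x)$ the face $\mu\bullet_0$ merges with $\Phi\bullet_0$ into $(\mu\circ_1\Phi)\bullet_0 x$ (middle case of \rmref{SchlesischeStr}), the faces $\mu\bullet_i$ with $1\le i\le n-p-1$ commute past $\Phi\bullet_0$ (first case), and the cyclic face $\mu\bullet_0 t(\Phi\bullet_0 x)$ becomes $(\mu\circ_2\Phi)\bullet_0 t(x)$ after using \rmref{lagrandebellezza1}. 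Dually, in $\Phi\bullet_0(\bb x)$ the faces $\mu\bullet_i$ with $0\le i\le p$ absorb into $(\Phi\circ_{i+1}\mu)\bullet_0 x$ (middle case), those with $p+1\le i\le n-1$ commute outward (third case), and the cyclic face yields $(\Phi\circ_1\mu)\bullet_0 t(x)$. Forming the graded commutator $[\bb,\iota_\varphi]x=\bb\iota_\varphi x-(-1)^p\iota_\varphi\bb x$, the interior terms $\mu\bullet_k(\Phi\bullet_0 x)$ occur in $\bb\iota_\varphi x$ with coefficient $(-1)^k$ and, after the reindexing $i=k+p$, with coefficient $(-1)^p(-1)^{k+p}=(-1)^k$ in $(-1)^p\iota_\varphi\bb x$, so they cancel; the two cyclic terms cancel as well, since $\mu\circ_2\Phi=\Phi\circ_1\mu$ (both equal $(\mu\circ_2\mu)\circ_3\varphi$ by \rmref{danton} and \rmref{distinguished1}) and $(-1)^{n-p}=(-1)^{n+p}$. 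What remains is a sum of terms $(\,\cdot\,)\bullet_0 x$, which I would identify with $\iota_{\gd\varphi}x=\big(\mu\circ_2(\gd\varphi)\big)\bullet_0 x$ by expanding $\gd\varphi=\{\mu,\varphi\}$ through \rmref{zugangskarte} and \rmref{erfurt} and reducing $\mu\circ_2(\gd\varphi)$ by the operad axioms: the contributions $(\mu\circ_2\mu)\circ_2\varphi$, $(\mu\circ_2\mu)\circ_3\varphi$ and the residual sum over the terms $\Phi\circ_{i+1}\mu$, $1\le i\le p$, match on both sides.

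I expect the main obstacle to be the sign and index bookkeeping in \rmref{mulhouse2}: one must determine which case of \rmref{SchlesischeStr}/\rmref{SchlesischeStr0} governs each face index, route the cyclic face $d_n=\mu\bullet_0 t$ correctly through the cocyclicity \rmref{lagrandebellezza1}--\rmref{lagrandebellezza2}, and treat the boundary ranges (small $n$, or $p$ near $n$, where $\iota_{\gd\varphi}$ may vanish into $\cM({n-p-1})=0$) so that no term is miscounted. Once the case analysis is organised, every remaining step is a mechanical consequence of the operad associativity \rmref{danton} and the associativity of the multiplication $\mu\circ_1\mu=\mu\circ_2\mu$.
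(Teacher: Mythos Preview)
Your proposal is correct and follows essentially the same route as the paper's own proof: for \rmref{mulhouse1} you merge the two $\bullet_0$-operations via the middle case of \rmref{SchlesischeStr} and reduce the resulting operadic expression by \rmref{danton} and \rmref{distinguished1}, and for \rmref{mulhouse2} you expand both sides face by face, commute or absorb each $\mu\bullet_i$ through $\Phi\bullet_0$ according to the case distinction in \rmref{SchlesischeStr}, cancel the interior and cyclic contributions, and identify the residue with $(\mu\circ_2\gd\varphi)\bullet_0 x$. The only cosmetic difference is your use of the abbreviation $\Phi=\mu\circ_2\varphi$, which streamlines the bookkeeping slightly.
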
 
\begin{proof}
Eq.~\rmref{mulhouse1} is proven as follows, using \rmref{SchlesischeStr}, \rmref{danton}, and \rmref{distinguished1}:
\begin{equation*}
\begin{split}
\iota_\gvf \iota_\psi x 
&= (\mu \circ_2 \gvf) \bullet_0 (( \mu \circ_2 \psi) \bullet_0 x)  
= ((\mu \circ_2 \gvf) \circ_1 (\mu \circ_2 \psi)) \bullet_0 x  
\\
&
= 
\big(((\mu \circ_2 \gvf) \circ_1 \mu) \circ_2 \psi\big) \bullet_0 x  
=
\big(((\mu \circ_1 \mu) \circ_3 \gvf) \circ_2 \psi\big) \bullet_0 x  
\\
&
= \big(((\mu \circ_2 \mu) \circ_3 \gvf) \circ_2 \psi\big) \bullet_0 x  
= \big((\mu \circ_2 (\mu \circ_2 \gvf)) \circ_2 \psi\big) \bullet_0 x  
\\
&
= \big(\mu \circ_2 ((\mu \circ_2 \gvf) \circ_1 \psi)\big) \bullet_0 x  
= \iota_{\gvf \smallsmile \psi} x.
\end{split}
\end{equation*}
As for \rmref{mulhouse2}, we compute for $x \in \cM(n)$ with the the help of \rmref{SchlesischeStr}, \rmref{distinguished1}, 
\rmref{danton}, and \rmref{lagrandebellezza1} for the simplicial pieces that appear in the commutator $[\bb,\iota_\varphi]$:
\begin{equation*}
\begin{split}
d_0\iota_\gvf x 
&= \mu \bullet_0 (( \mu \circ_2 \gvf) \bullet_0 x)  
= (\mu \circ_1 (\mu \circ_2 \gvf)) \bullet_0 x  \\
&= ((\mu \circ_1 \mu) \circ_2 \gvf) \bullet_0 x  
= ((\mu \circ_2 \mu) \circ_2 \gvf) \bullet_0 x  
= (\mu \circ_2 (\mu \circ_1 \gvf)) \bullet_0 x,  
\end{split}
\end{equation*}
along with
$
d_i\iota_\gvf x 
= \mu \bullet_i (( \mu \circ_2 \gvf) \bullet_0 x)  
$
for $i = 1, \ldots, n-p-1$,
and
\begin{equation*}
\begin{split}
d_{n-p}\iota_\gvf x 
&= \mu \bullet_0 t(( \mu \circ_2 \gvf) \bullet_0 x)  
= \mu \bullet_0 (( \mu \circ_2 \gvf) \bullet_1 t(x))  
= (\mu \circ_2 (\mu \circ_2 \gvf)) \bullet_0 t(x).
\end{split}
\end{equation*}
On the other hand,
\begin{equation*}
\begin{split}
\iota_\gvf d_0 x 
&= 
\mu \bullet_0 (\gvf \bullet_1 ( \mu \bullet_0 x))  
= \mu \bullet_0 ( \mu \bullet_0 (\gvf \bullet_2 x))  
= (\mu \circ_1 \mu) \bullet_0 (\gvf \bullet_2 x) \\  
&= ((\mu \circ_1 \mu) \circ_3 \gvf) \bullet_0 x  
= ((\mu \circ_2 \mu) \circ_3 \gvf) \bullet_0 x  
= (\mu \circ_2 (\mu \circ_2 \gvf)) \bullet_0 x,  
\end{split}
\end{equation*}
along with 
$$
\iota_\gvf d_i x 
= 
\mu \bullet_0 ((\gvf \bullet_i  \mu) \bullet_1 x)  
= (\mu \circ_2 (\gvf \circ_i \mu)) \bullet_0 x   
$$
for $i=1, \ldots, p$, whereas for $i= p+1, \ldots, n-1$ one has
$$
\iota_\gvf d_i x 
= (\mu \circ_2 \gvf) \bullet_0 (\mu \bullet_i x) = \mu \bullet_{i-p} ((\mu \circ_2 \gvf) \bullet_0 x) =   \mu \bullet_{k} ((\mu \circ_2 \gvf) \bullet_0 x),  
$$
where now $k = 1, \ldots, n-p-1$. Finally,
\begin{equation*}
\begin{split}
\iota_\gvf d_n x 
&= 
(\mu \circ_2 \gvf) \bullet_0 (\mu \bullet_0 t x)
= ((\mu \circ_2 \gvf) \circ_1 \mu) \bullet_0 t(x)  
= ((\mu \circ_1 \mu) \circ_3 \gvf) \bullet_0 t(x) \\
&
 = ((\mu \circ_2 \mu) \circ_3 \gvf) \bullet_0 t(x)
 = (\mu \circ_2 (\mu \circ_2 \gvf)) \bullet_0 t(x).
\end{split}
\end{equation*}
Hence,
\begin{small}
\begin{equation*}
\begin{split}
b&\iota_\gvf x - (-1)^p \iota_\varphi \, \bb x \\
&= 
(\mu \circ_2 (\mu \circ_1 \gvf)) \bullet_0 x 
+ (-1)^{p-1} (\mu \circ_2 (\mu \circ_2 \gvf)) \bullet_0 x 
+ (-1)^{p+i-1} (\mu \circ_2 (\gvf \circ_i \mu)) \bullet_0 x \\
& = \iota_{\{\mu, \gvf\}} x = \iota_{\gd\gvf} x,
\end{split}
\end{equation*}
\end{small}
which concludes the proof of the proposition.
\end{proof}

\subsection{The dg Lie algebra module structure}

In this section, we prove that 
the Lie derivative $\lie$ defines a dg 
Lie algebra representation of $(\cO^\bull[1], \{.,.\})$ on $\cM_{-\bull}$: 

\begin{theorem}
\label{feinefuellhaltertinte}
Let $(\cO, \mu)$ be an operad with multiplication and $\cM$ a cyclic unital comp module over $\cO$.
For any two $\gvf \in \cO(p)$ 
and $\psi \in \cO(q)$, 
we have
\begin{equation}
\label{weimar}
[\lie_\varphi, \lie_\psi] = \lie_{\{\varphi, \psi\}},
\end{equation}
where the bracket on the right-hand side is the Gerstenhaber bracket
\rmref{zugangskarte}.
Furthermore, the simplicial differential \rmref{appolloni2} on $\cM_\bull$ is given (up to a sign) by the Lie derivative along the operad multiplication, that is,
\begin{equation}
\label{sachengibts}
        \bb = - \lie_\mu, 
\end{equation}
and therefore
\begin{equation}
\label{alles1}
[\bb, \lie_\varphi] + \lie_{\gd\varphi}= 0. 
\end{equation}
\end{theorem}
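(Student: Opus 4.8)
The plan is to establish the three assertions in order of increasing difficulty, deducing the last two from \rmref{weimar} together with the structural identities already available. The identity \rmref{sachengibts} is a direct substitution: putting $\mu \in \cO(2)$, hence $p=2$, into \rmref{messagedenoelauxenfantsdefrance2}, the interior sum becomes $\sum_{i=1}^{n-1}(-1)^{i-1}\mu\bullet_i x$ while the cyclic sum (only $i=1,2$ survive) becomes $-\mu\bullet_0 x + (-1)^{n+1}\mu\bullet_0 t(x)$. Negating and reindexing the interior sum to begin at $i=0$, absorbing the term $\mu\bullet_0 x$, reproduces \rmref{appolloni2} termwise, so $\bb=-\lie_\mu$.

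Granting \rmref{weimar}, the identity \rmref{alles1} is then purely formal: since $\bb=-\lie_\mu$ and the graded commutator is bilinear, $[\bb,\lie_\varphi]=-[\lie_\mu,\lie_\varphi]$, the signs matching because $\mu$ has shifted degree $2-1=1$. By \rmref{weimar} for the pair $(\mu,\varphi)$ this equals $-\lie_{\{\mu,\varphi\}}$, and $\gd\varphi=\{\mu,\varphi\}$ by \rmref{erfurt} yields $[\bb,\lie_\varphi]+\lie_{\gd\varphi}=0$.

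The heart of the matter is \rmref{weimar}. I would split each Lie derivative as $\lie_\varphi=\lambda_\varphi+\lie^{\cyc}_\varphi$, where $\lambda_\varphi$ is the interior part from the remark after Example \ref{schnee1} and $\lie^{\cyc}_\varphi x:=\sum_{i=1}^{p}(-1)^{n(i-1)+p-1}\varphi\bullet_0 t^{i-1}(x)$ is the cyclic part, then expand the graded commutator with sign $(-1)^{(p-1)(q-1)}$. The interior--interior piece is already $\lambda_{\{\varphi,\psi\}}$ by that remark, which itself follows from the three cases of \rmref{SchlesischeStr}: the disjoint-slot terms (first and third cases) cancel against their twisted counterparts, and only the overlapping terms (middle case), producing the partial composites $\varphi\circ_{j-i+1}\psi$, survive and assemble into $\{\varphi,\psi\}$. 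It then remains to show that the interior--cyclic, cyclic--interior, and cyclic--cyclic contributions collapse to $\lie^{\cyc}_{\{\varphi,\psi\}}$.

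For these mixed contributions I would push each $t$ outward through the comp actions by \rmref{lagrandebellezza1}, rewrite the nested $\bullet_0$ compositions by \rmref{SchlesischeStr}/\rmref{SchlesischeStr0} and \rmref{danton}, and invoke $t^{n+1}=\id$ from \rmref{lagrandebellezza2} whenever an index wraps around the module. The same disjoint-versus-overlapping dichotomy forces the cancellation of the separated terms, leaving only the composed ones, now routed through $\bullet_0$, which reconstitute the cyclic terms of $\lie_{\{\varphi,\psi\}}$. The main obstacle is the sign and index bookkeeping in the cyclic part: the outer Lie derivative acts on $\cM(n-q+1)$ (resp.\ $\cM(n-p+1)$), so its weights $(-1)^{n(i-1)}$ must be read with the shifted length, and reconciling these against the unshifted weights on the other side of the commutator---after the index shifts forced by \rmref{lagrandebellezza1} and the wraparound from \rmref{lagrandebellezza2}---is where the delicate cancellations occur. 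A careful reindexing in each double sum, tracked modulo $n+1$, is what ultimately produces the coefficients matching $\lie^{\cyc}_{\{\varphi,\psi\}}$.
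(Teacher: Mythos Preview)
Your plan is correct and mirrors the paper's own proof: the paper likewise splits $\lie_\varphi\lie_\psi$ and $-(-1)^{(p-1)(q-1)}\lie_\psi\lie_\varphi$ into the four interior/cyclic combinations (labelled (1)--(8)), identifies the interior--interior block with $\lambda_{\{\varphi,\psi\}}$ via the three cases of \rmref{SchlesischeStr} exactly as you describe, and then shows by explicit reindexing with \rmref{lagrandebellezza1}--\rmref{lagrandebellezza2} that the remaining six blocks cancel against the cyclic part of $\lie_{\{\varphi,\psi\}}$; the derivations of \rmref{sachengibts} and \rmref{alles1} are also handled as you indicate. The only difference is that the paper actually carries out the ``careful reindexing in each double sum'' that you flag as the main obstacle, whereas your proposal stops at the strategic level.
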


\begin{proof}
The statement is proven by an explicit computation using \rmref{SchlesischeStr}, \rmref{lagrandebellezza1}, and \rmref{lagrandebellezza2}, plus some enforced double sum yoga, and is essentially straightforward.
 
Assume that $1 \leq q \leq p$ and 
$p+q \leq n+1$; the proof for zero cochains and for the extreme cases $q = 0$,
$p = n+1$ can be carried out by similar, but easier
computations. 
To obtain shorter expressions in what follows, introduce the sign functions
\begin{equation}
\label{nerv2}
\zeta(p,i):= (p-1)(i-1), \qquad \xi(n,p,i) := n(i-1) + p-1.
\end{equation}
Then, for computing the left-hand side in \rmref{weimar}, consider
for $x \in \cM(n)$
\begin{footnotesize}
\begin{equation*}
\begin{split}
\lie_\varphi \lie_\psi x 
&= {\sum^{n-p-q+2}_{i=1} \, \sum^{n-q+1}_{j=1} (-1)^{\zeta(p,i) + \zeta(q,j)} \gvf \bullet_i (\psi \bullet_j x)} 
\\
&\qquad
+ 
{\sum^{n-p-q+2}_{i=1} \, \sum^{q}_{j=1} (-1)^{\zeta(p,i) + \xi(n, {q,j})} 
\gvf \bullet_i (\psi \bullet_0 \ttt^{j-1}(x))
}  
\\
&\qquad 
+ {\sum^{p}_{i=1} \, \sum^{n-q+1}_{j=1} (-1)^{\xi({n-q+1},{p,i}) + \zeta(q,j)} 
\gvf \bullet_0 \ttt^{i-1} (\psi \bullet_j x)}  
\\
&\qquad 
+ {\sum^{p}_{i=1} \, \sum^{q}_{j=1} (-1)^{\xi({n-q+1}, {p,i}) + \xi({n}, {q,j})} 
\gvf \bullet_0 \ttt^{i-1} (\psi \bullet_0  \ttt^{j-1}(x))} 
\\
&=: (1) + (2) + (3) + (4) ,  
\end{split}
\end{equation*}
\end{footnotesize}
where the numbers $(1), (2), (3), (4)$ are meant to correspond in the same order to the respective sums in the equation before. Likewise,
\begin{footnotesize}
\begin{equation*}
\begin{split}
-(-1)^{(p-1)(q-1)} \lie_\psi \lie_\varphi  
&=  {-
\sum^{n-p-q+2}_{j=1} \, \sum^{n-p+1}_{i=1} (-1)^{\zeta(q,j) + \zeta(p,{i + q-1})} 
\psi \bullet_j (\gvf \bullet_i x)
} 
\\
&
\quad {-  \sum^{n-q-p+2}_{j=1} \, \sum^{p}_{i=1} (-1)^{\zeta(q,{j+p-1}) + \xi(n,{p,i})}  
\psi \bullet_j (\gvf \bullet_0 \ttt^{i-1}(x))
}
\\
& 
\quad
 {-\sum^{q}_{j=1} \, \sum^{n-p+1}_{i=1} (-1)^{\xi(n,{q,j}) + \zeta(p,{i+j-1})}  
\psi \bullet_0 \ttt^{j-1} (\gvf \bullet_i x)}  
\\
& 
\quad
{- \sum^{q}_{j=1} \, \sum^{p}_{i=1} (-1)^{\xi(n, {p+q-1,i+j-1}) + \zeta(p, {j+q-1})}  
\psi \bullet_0 \ttt^{j-1} (\gvf \bullet_0  \ttt^{i-1}(x))
} \\
&=: (5) + (6) + (7) + (8).  
\end{split}
\end{equation*}
\end{footnotesize}
On the other hand, the right-hand side of \rmref{weimar} reads, by accordingly applying \rmref{SchlesischeStr}:
\begin{footnotesize}
\begin{equation*}
\begin{split}
\lie_{\{\varphi, \psi\}}  &= 
\sum^{n-p-q+2}_{i=1} \, \sum^{i+p-1}_{j=i} (-1)^{\zeta(p,i) + \zeta(q,j)} 
(\gvf \circ_{j-i+1} \psi) \bullet_i x
\\
&\quad
 + \sum^{p+q-1}_{i=1} \, \sum^{p}_{j=1} (-1)^{\xi(n,{p,i}) + \zeta(q,{j-1})}  
\gvf \bullet_0 (\psi \bullet_{j-1} \ttt^{i-1}(x))
\\
&\quad 
- \!\! \sum^{n-p-q+2}_{i=1} \, \sum^{i+q-1}_{j=i} (-1)^{\zeta(q,j) + \zeta(p, {i+q-1})} 
(\psi \circ_{j-i+1} \gvf) \bullet_i x
\\
&\quad
  - \sum^{p+q-1}_{i=1} \, \sum^{q}_{j=1} (-1)^{\xi(n,{q,j}) + \zeta(p,{q+i})}  
\psi \bullet_0 (\gvf \bullet_{j-1} \ttt^{i-1}(x))
\\
&=: (9) + (10) + (11) + (12).  
\end{split}
\end{equation*}
\end{footnotesize}
We now decompose the sum (1) into the three intervals of the index $i$ that appear in the property \rmref{SchlesischeStr} and subsequently use the relations in \rmref{SchlesischeStr}:
\begin{footnotesize}
\begin{equation}
\label{weimar2}
\begin{split}
& (1) = {\sum^{n-q+1}_{j=p+1} \, \sum^{j-p}_{i=1} (-1)^{\zeta(p,i) + \zeta(q,j)} \psi \bullet_{j-p+1} (\gvf \bullet_i x)} 
\\
&
\qquad
+ 
 \sum^{n-q+1}_{j=1} \, \sum^{j}_{i=j-p+1} (-1)^{\zeta(p,i) + \zeta(q,j)} (\gvf \circ_{j-i+1} \psi) \bullet_i x 
\\
&\qquad
+
 {\sum^{n-q-p+1}_{j=1} \, \sum^{n-p-q+2}_{i=j+1} (-1)^{\zeta(p,i) + \zeta(q,j)} \psi \bullet_j (\gvf \bullet_{i+q-1} x)} \\
&= 
{\sum^{n-q-p+1}_{i=1} \, \sum^{n-q-p+2}_{j=i+1} (-1)^{\zeta(p,i) + \zeta(q, {j-p+1})} \psi \bullet_{j} (\gvf \bullet_i x)} 
\\
&
\qquad
+ 
 \sum^{n-q-p+2}_{i=1} \, \sum^{i+p-1}_{j=i} (-1)^{\zeta(p,i) + \zeta(q,j)} (\gvf \circ_{j-i+1} \psi) \bullet_i x 
\\
&\qquad
+
 {\sum^{n-q-p+1}_{j=1} \, \sum^{n-p+1}_{i=j+q} (-1)^{\zeta(p,{i+q-1}) + \zeta(q,j)} \psi \bullet_j (\gvf \bullet_{i} x)} 
=: (13) + (14) + (15), 
\end{split}
\end{equation}
\end{footnotesize}
where we applied the double sum transformation
$$
\sum^{n-q+1}_{p+1} \, \sum^{j-p}_{i=1} = \sum^{n-q-p+1}_{i=1} \, \sum^{n-q+1}_{j=i+p} 
$$
to the first sum in the second step, and a similar transformation
to the second sum. Now one observes that (14) = (9) and that the first term (13) and the third term (15) in \rmref{weimar2} cancel with the respective third and first term that would arise from an analogous decomposition of (5), using the property
$\zeta(p,{i+q-1}) + \zeta(q,j) = \zeta(p,i) + \zeta(q,{j+p-1})$ for the sign function $\zeta$ in \rmref{nerv2}, 
whereas the second term in such a decomposition would analogously cancel with (11). Hence the remaining terms to consider are (2), (3), (4), and (6), (7), (8), along with (10) and (12). We are going to show that (3), (4), (6), as well as (10) cancel, and by symmetry this also shows that analogously (2), (7), (8), as well as (12) cancel:
one has, using \rmref{lagrandebellezza1} and \rmref{lagrandebellezza2} along with a double sum transformation as above,
\begin{footnotesize}
\begin{equation*}
\begin{split}
(3) &= 
{\sum^{p}_{i=1} \, \sum^{n-q-i+2}_{j=1} (-1)^{\xi({n-q+1},{p,i}) + \zeta(q,j)} \gvf \bullet_0 t^{i-1}(\psi \bullet_j x)} 
\\
&\qqquad
+ 
{\sum^{p}_{i=2} \, \sum^{n-q+1}_{j=n-q-i+3} (-1)^{\xi({n-q+1},{p,i}) + \zeta(q,j)} \gvf \bullet_0 t^{i-1}(\psi \bullet_j x)} 
\\
&=
{\sum^{p}_{i=1} \, \sum^{n-q-i+2}_{j=1} (-1)^{\xi({n-q+1},{p,i}) + \zeta(q,j)} \gvf \bullet_0 (\psi \bullet_{j+i-1} t^{i-1}(x))} 
\\
&\qqquad
+ 
{\sum^{p}_{i=2} \, \sum^{n-q+1}_{j=n-q-i+3} (-1)^{\xi({n-q+1},{p,i}) + \zeta(q,j)} \gvf \bullet_0 t^{i-1}t^{n-q-i+3}(\psi \bullet_{j-n+i+q+1} t^{i+q}(x))} 
\\
&=
{\sum^{p}_{i=1} \, \sum^{n-q+1}_{j=i} (-1)^{\xi({n},{p,i}) + \zeta(q,j)} \gvf \bullet_0 (\psi \bullet_{j} t^{i-1}(x))} 
\\
&
\qquad
+ 
{\sum^{p}_{i=2} \, \sum^{i-2}_{j=0} (-1)^{\xi({n},{p,i}) + \zeta(q,{j-1})} \gvf \bullet_0 (\psi \bullet_j t^{i+q-2}(x))} 
\\
&=
{\sum^{p}_{i=1} \, \sum^{n-q+1}_{j=i} (-1)^{\xi({n},{p,i}) + \zeta(q,j)} \gvf \bullet_0 (\psi \bullet_{j} t^{i-1}(x))} 
\\
&\qquad
+ 
{\sum^{p-1}_{j=1} \, \sum^{p+q-1}_{i=q+j} (-1)^{\xi({n},{p,i}) + \zeta(q,{j-1})} \gvf \bullet_0 (\psi \bullet_{j-1} t^{i-1}(x))} 
\\
&=: (3a) + (3b). 
\end{split}
\end{equation*}
\end{footnotesize}
Moreover, by \rmref{lagrandebellezza1} again,
\begin{footnotesize}
\begin{equation*}
\begin{split}
(4) &= \sum^{p}_{i=1} \, \sum^{q}_{j=1} (-1)^{\xi({n-q+1},{p,i}) + \xi({n},{q, j})} \gvf \bullet_0 (\psi \bullet_{i-1} t^{i+j-2}(x)) 
\\
&
= \sum^{p}_{j=1} \, \sum^{j+q-1}_{i=j} (-1)^{\xi({n},{p,i}) + \zeta(q, {j-1})} \gvf \bullet_0 (\psi \bullet_{j-1} t^{i-1}(x)),
\end{split}
\end{equation*}
\end{footnotesize}
along with
\begin{footnotesize}
\begin{equation*}
\begin{split}
(6) &= - \!\!\! \sum^{n-p-q+2}_{j=1} \, \sum^{p}_{i=1} (-1)^{\xi({n},{p,i}) + \zeta(q,{j+p-1})} \gvf \bullet_0 (\psi \bullet_{j+p-1} t^{i-1}(x)) \\
&= - \!\!\! \sum^{n-q+1}_{j=p} \, \sum^{p}_{i=1} (-1)^{\xi({n},{p,i}) + \zeta(q,{j})} \gvf \bullet_0 (\psi \bullet_{j} t^{i-1}(x)),
\end{split}
\end{equation*}
\end{footnotesize}
using \rmref{SchlesischeStr} again, whereas
\begin{footnotesize}
\begin{equation*}
\begin{split}
(10) &= \sum^{p-1}_{j=1} \, \sum^{p+q-1}_{i=1} (-1)^{\xi({n},{p,i}) + \zeta(q,{j-1})} \gvf \bullet_0 (\psi \bullet_{j-1} t^{i-1}(x)) 
\\
&\qquad
+  \sum^{p+q-1}_{i=1} (-1)^{\xi({n},{p,i}) + \zeta(q,{p-1})} \gvf \bullet_0 (\psi \bullet_{p-1} t^{i-1}(x)) 
=:(10a) + (10b).
\end{split}
\end{equation*}
\end{footnotesize}
One then computes, again with the help of the usual double sum yoga,
\begin{footnotesize}
\begin{equation*}
\label{weimar4}
\begin{split}
& \Big[\big[[(3b) - (10a)] - (10b)\big] +  (4)\Big] + \big[(3a) + (6)\big] 
\\
&\quad = 
\Big[\big[- \sum^{p-1}_{j=1} \, \sum^{j+q-1}_{i=1}  (-1)^{\xi({n},{p,i}) + \zeta(q,{j-1})} \gvf \bullet_0 (\psi \bullet_{j-1} t^{i-1}(x)) - (10b)\big] + (4) \Big] 
\\
&\qqquad
+ \sum^{p-1}_{i=1} \, \sum^{p-1}_{j=i}  (-1)^{\xi({n},{p,i}) + \zeta(q,{j})} \gvf \bullet_0 (\psi \bullet_{j} t^{i-1}(x))
\end{split}
\end{equation*}
\begin{equation*}
\begin{split}
&\quad 
= 
\Big[- \sum^{p}_{j=1} \, \sum^{j+q-1}_{i=1}  (-1)^{\xi({n},{p,i}) + \zeta(q,{j-1})} \gvf \bullet_0 (\psi \bullet_{j-1} t^{i-1}(x)) + (4) \Big] 
\\
&\qqquad
+ \sum^{p-1}_{i=1} \, \sum^{p}_{j=i+1}  (-1)^{\xi({n},{p,i}) + \zeta(q,{j-1})} \gvf \bullet_0 (\psi \bullet_{j-1} t^{i-1}(x))
\\
&= 
- \!\! \sum^{p}_{j=2} \, \sum^{j-1}_{i=1}  (-1)^{\xi({n},{p,i}) + \zeta(q,{j-1})} \gvf \bullet_0 (\psi \bullet_{j-1} t^{i-1}(x))
\\
&\qquad 
+ \sum^{p-1}_{i=1} \, \sum^{p}_{j=i+1}  (-1)^{\xi({n},{p,i}) + \zeta(q,{j-1})} \gvf \bullet_0 (\psi \bullet_{j-1} t^{i-1}(x)) = 0,
\end{split}
\end{equation*}
\end{footnotesize}
and this concludes the proof of Eq.~\rmref{weimar}.

Finally, Eq.~\rmref{sachengibts} directly follows from \rmref{appolloni2}, 
and \rmref{alles1} then results from this along with \rmref{weimar} and \rmref{erfurt}.
\end{proof}

\subsection{The Gerstenhaber module structure}

By \rmref{mulhouse2} and \rmref{alles1}, both
operators $\iota_\varphi$ and $\lie_\varphi$ 
descend to well defined operators on 
the simplicial homology 
$H_\bull(\cM)$ as soon as $\varphi$ is a cocycle. 
In this case, the following theorem together with Theorem \ref{feinefuellhaltertinte} proves that 
$\iota$ and $\lie$ turn $H_\bull(\cM)$ 
into a module over the 
Gerstenhaber algebra $H^\bull(\cO)$ in the sense of Definition \ref{golfoaranci}:

\begin{theorem}
\label{waterbasedvarnish}
Let $(\cO,\mu)$ be an operad with multiplication and $\cM$ a cyclic unital comp module over $\cO$. For any two cocycles 
$\varphi \in \cO(p)$, $\psi \in \cO(q)$, 
the induced maps 
$$
\lie_\varphi:   H_\bull(\cM) \to H_{\bull-p+1}(\cM), \qquad
\iota_\psi:  H_\bull(\cM) \to H_{\bull-q}(\cM) 
$$
satisfy
\begin{equation}
\label{radicale1}
[\iota_\psi, \lie_\varphi] = \iota_{\{\psi, \gvf\}}.
\end{equation}
\end{theorem}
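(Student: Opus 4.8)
The plan is to prove the identity at the chain level first and then read off the assertion for the induced maps on $H_\bull(\cM)$. Adopting the same operator-commutator convention as in \rmref{mulhouse2} (so that, compatibly with Definition \ref{golfoaranci}\,(ii), one has $[\iota_\psi,\lie_\varphi]=\iota_\psi\lie_\varphi-(-1)^{(p-1)q}\lie_\varphi\iota_\psi$), I would substitute the definitions \rmref{alles4} and \rmref{messagedenoelauxenfantsdefrance2} into both compositions. As in the proof of Theorem \ref{feinefuellhaltertinte}, I would assume $1\le q\le p$ and $p+q\le n+1$, treating the extreme cases ($q=0$, $p=n+1$) separately by the same but easier bookkeeping. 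Every resulting summand is then of the shape $(\mu\circ_2\psi)\bullet_0$ applied to $\varphi\bullet_i(-)$ or $\varphi\bullet_0 t^{i-1}(-)$, or the same with the two roles exchanged, so the whole task reduces to commuting the two operadic inputs past one another by repeated use of \rmref{SchlesischeStr}, the cyclicity \rmref{lagrandebellezza1}--\rmref{lagrandebellezza2}, and the associativity/unit axioms \rmref{danton}, \rmref{distinguished1}.

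The guiding structural observation is that \rmref{SchlesischeStr} splits each product into a \emph{collision} family and a \emph{pass-over} family. In the collision case the two inputs get composed via $\circ$: the middle line of \rmref{SchlesischeStr} together with operad associativity \rmref{danton} gives, for $1\le i\le q$,
\begin{equation*}
\iota_\psi(\varphi\bullet_i x)=\bigl(\mu\circ_2(\psi\circ_i\varphi)\bigr)\bullet_0 x=\iota_{\psi\circ_i\varphi}(x),
\end{equation*}
so that, by $k$-linearity of the cap product in its argument, the corresponding signed sum over $i$ reassembles exactly $\iota_{\psi\bar\circ\varphi}(x)$. In the pass-over case (first and third lines of \rmref{SchlesischeStr}) one gets, for $i>q$, terms $\varphi\bullet_{i-q}\iota_\psi x$; a reindexing $k=i-q$ shows that these are precisely $(-1)^{(p-1)q}$ times the first (non-$t$) sum of $\lie_\varphi\iota_\psi x$, hence they cancel in the commutator. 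The point I would stress is that the first sums of the two orderings are ``dual'' and cancel completely, so the \emph{second} half $\iota_{\varphi\bar\circ\psi}$ of the bracket $\{\psi,\varphi\}=\psi\bar\circ\varphi-(-1)^{(p-1)(q-1)}\varphi\bar\circ\psi$ (cf.\ \rmref{zugangskarte}) can only emerge from the cyclic $\bullet_0 t^{i-1}$ contributions.

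Consequently the hard part is the bookkeeping of these extra $\bullet_0 t^{i-1}$ terms. Pushing $t$ through a cap product is not form-preserving: \rmref{lagrandebellezza1} converts $t\,\iota_\psi=t\bigl((\mu\circ_2\psi)\bullet_0-\bigr)$ into $(\mu\circ_2\psi)\bullet_1 t(-)$, and iterating it turns $t^{i-1}\iota_\psi x$ into $(\mu\circ_2\psi)\bullet_{i-1}t^{i-1}(x)$, which must be re-folded using $t^{n+1}=\id$ from \rmref{lagrandebellezza2}. Working out $(\mu\circ_2\psi)\circ_1\varphi=(\mu\circ_1\varphi)\circ_{p+1}\psi$ via \rmref{danton} shows the raw $t$-terms are not individually cap products, so I would expect exactly the same kind of ``double-sum yoga'' and range-matching as in \rmref{weimar} to be needed to reorganise them into $\iota_{\varphi\bar\circ\psi}$. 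This range-matching is where I anticipate the main obstacle.

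Finally, I would collect everything: after all cancellations the chain-level difference should equal $\iota_{\{\psi,\varphi\}}$ plus correction terms that die on homology. These corrections I expect to be of two types, either proportional to $\iota_{\gd\varphi}$ and $\iota_{\gd\psi}$ (recognisable through $\gd=\{\mu,-\}$ of \rmref{erfurt} and the boundary-range $t$-terms), which vanish because $\varphi,\psi$ are cocycles, or a simplicial commutator $[\bb,h]$ for an explicit homotopy $h$ built from the uncancelled $t$-contributions, which induces the zero map on $H_\bull(\cM)$. Since $\iota_\varphi$ and $\lie_\varphi$ descend to homology precisely for cocycles (by \rmref{mulhouse2} and \rmref{alles1}), invoking $\gd\varphi=\gd\psi=0$ then yields $[\iota_\psi,\lie_\varphi]=\iota_{\{\psi,\varphi\}}$ on $H_\bull(\cM)$; the presence of the $\bb$-homotopy is exactly what forces the statement to be made for the induced maps rather than on the nose, in contrast with \rmref{weimar}.
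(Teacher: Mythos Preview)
Your overall approach matches the paper's: a direct chain-level computation that splits each product into collision terms (giving $\iota_{\psi\bar\circ\gvf}$) and pass-over terms (cancelling in the commutator), leaving the cyclic $\bullet_0 t^{i-1}$ contributions as the hard part. Your identification of the residual terms is correct. However, your expectations for how those residual terms are handled miss the two specific mechanisms the paper uses, and without them the argument does not close.

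First, after the easy cancellations the leftover terms (the paper's \rmref{nichschonwieder}) are $\sum_i(\psi\smallsmile\gvf)\bullet_0 t^{i-1}(x)$ and $\sum_i(\gvf\circ_i(\mu\circ_2\psi))\bullet_0 t^{i-1}(x)$. These are not cap products, and no double-sum yoga in the style of \rmref{weimar} alone converts them into $\iota_{\gvf\bar\circ\psi}(x)$. The decisive step you do not invoke is Gerstenhaber's classical identity $(-1)^{p-1}\gvf\bar\circ\gd\psi-(-1)^{p-1}\gd(\gvf\bar\circ\psi)+\gd\gvf\bar\circ\psi=\gvf\smallsmile\psi-(-1)^{pq}\psi\smallsmile\gvf$, which for cocycles rewrites $\psi\smallsmile\gvf$ as $(-1)^{pq}\gvf\smallsmile\psi-(-1)^{p(q-1)}\gd(\gvf\bar\circ\psi)$. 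So the relevant correction is not $\iota_{\gd\gvf}$ or $\iota_{\gd\psi}$ as you anticipate, but involves $\gd(\gvf\bar\circ\psi)$. Second, the paper does not package the remainder as a chain homotopy $[b,h]$; instead it works directly on a cycle $x$ and uses $bx=0$ to rewrite each occurrence of $\mu\bullet_0(\,\cdots\,)$ as minus the remaining faces. Combined with $\gd\gvf=\gd\psi=0$, this feeds an \emph{iterative} reduction (the passage from \rmref{lucca8} to \rmref{lucca12}): the identity at ``level $t^p$'' reproduces itself at ``level $t^{p-1}$'' while spitting out exactly two summands matching the $i=p$ summands of the target; iterating $p-1$ more times exhausts the claim. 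This inductive structure is not visible from the proof of \rmref{weimar} and is the missing idea in your proposal.
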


\begin{proof}
Assume without loss of generality that $q \geq  p$ and $p + q \leq n+1$, and recall the sign functions $\zeta$ and $\xi$ from \rmref{nerv2}.
A direct computation gives
\begin{footnotesize}
\begin{equation*}
\begin{split}
\iota_\psi \lie_\gvf(x) = 
 \sum^{n-p+1}_{i=1} (-1)^{\zeta(p,i)} (\mu \circ_2 \psi) \bullet_0 (\gvf  \bullet_i x) 
+ \sum^{p}_{i=1} (-1)^{\xi(n,{p,i})} ((\mu \circ_2 \psi) \circ_1 \gvf) \bullet_0 t^{i-1}(x), 
\end{split}
\end{equation*}
\end{footnotesize}
whereas
\begin{footnotesize}
\begin{equation*}
\begin{split}
- (-1)^{q(p-1)} \lie_\gvf \iota_\psi(x) 
&= - \!\! \sum^{n-p-q+1}_{i=1} (-1)^{\zeta(p,{i+q-1})} ( \mu \circ_2 \psi) \bullet_0 (\gvf \bullet_{i+q} x) 
\\
& \qquad
\, - \!\!  \sum^{p}_{i=1} (-1)^{\xi({n-q},{p,i}) + \zeta(p,{q-1})} (\gvf \circ_i (\mu \circ_2 \psi)) \bullet_0 t^{i-1}(x). 
\end{split}
\end{equation*}
\end{footnotesize}
Hence, using the expression \rmref{cupco} for the cup product, we have
\begin{footnotesize}
\begin{equation}
\label{nichschonwieder}
\begin{split}
[\iota_\psi, \lie_\gvf](x)  
&= \sum^{q}_{i=1} (-1)^{\zeta(p,i)} (\mu \circ_2 \psi) \bullet_0(\gvf  \bullet_i x)
\\
&
\qquad 
- \sum^{p}_{i=1} (-1)^{\xi({n-q},{p,i}) + \zeta(p,{q-1})} (\gvf \circ_i (\mu \circ_2 \psi)) \bullet_0 t^{i-1}(x) \\
& \qquad + \sum^{p}_{i=1} (-1)^{\xi(n,{p,i})} (\psi \smallsmile \gvf) \bullet_0 t^{i-1}(x). 
\end{split}
\end{equation}
\end{footnotesize}
The first summand in \rmref{nichschonwieder} simplifies to 
\begin{footnotesize}
\begin{equation*}
\begin{split}
 \sum^{q}_{i=1} (-1)^{\zeta(p,i)} (\mu \circ_2 \psi) \bullet_0(\gvf  \bullet_i x) 
&= \sum^{q}_{i=1} (-1)^{\zeta(p,i)} (\mu \circ_2 (\psi \circ_{i} \gvf)) \bullet_0 x 
= \iota_{\psi \bar\circ \gvf} x, 
\end{split}
\end{equation*}
\end{footnotesize}
hence we are left to show that 
\begin{footnotesize}
\begin{equation}
\label{lucca}
\begin{split}
&
\sum^{p}_{i=1} (-1)^{\xi(n,{p,i})} (\psi \smallsmile \gvf) \bullet_0 t^{i-1}(x) 
\, - 
\!\!
\sum^{p}_{i=1} (-1)^{\xi({n-q},{p,i}) + \zeta(p,{q-1})} (\gvf \circ_i (\mu \circ_2 \psi)) \bullet_0 t^{i-1}(x) 
\\
&
 = - (-1)^{\zeta(p,q)} \iota_{\gvf \bar\circ \psi}x. 
\end{split}
\end{equation}
\end{footnotesize}
To prove \rmref{lucca}, we will make use of the following standard result \cite{Ger:TCSOAAR}, but note that we use here the version with the opposite cup product compared to {\em op.~cit.}:

\begin{lem}
Let $(\cO, \mu)$ be an operad with multiplication. For any $\gvf \in \cO(p)$, $\psi \in \cO(q)$ the identity
\begin{equation*}
        (-1)^{p-1}\varphi \bar\circ \gd \psi 
        - (-1)^{p-1}\gd(\varphi \bar\circ \psi) 
        + \gd \varphi \bar\circ \psi = 
        \gvf \smallsmile \psi 
        - (-1)^{pq} \psi \smallsmile \gvf, 
\end{equation*}
holds with respect to the cup product \rmref{cupco} and the differential \rmref{erfurt}.
\end{lem}

This in particular means that 
$
\psi \smallsmile \gvf = (-1)^{pq} \gvf \smallsmile \psi - (-1)^{p(q-1)} \gd(\varphi \bar\circ \psi)
$ 
if $\varphi$ and $\psi$ are cocycles. Inserting this into \rmref{lucca} and using the definition of $\iota$ in \rmref{alles4}, it remains to prove that
\begin{footnotesize}
\begin{equation}
\label{lucca2}
\begin{split}
\!\!\!\!&\!\!\!\! \sum^{p}_{i=1} (-1)^{\xi(n,{p,i}) + pq} (\gvf \smallsmile \psi) \bullet_0 t^{i-1}(x) 
\, - \!\! 
\sum^{p}_{i=1} (-1)^{\xi({n-q},{p,i}) + \zeta(p,{q-1})} (\gvf \circ_i (\mu \circ_2 \psi)) \bullet_0 t^{i-1}(x)
\\
&= 
\sum^{p}_{i=1} \sum^{p}_{j=1} (-1)^{\xi(n,{pq,i}) + \zeta(q,j)} (\mu \circ_1 (\gvf \circ_j \psi)) \bullet_0 t^{i-1}(x)
\\
&
\qquad
+
\sum^{p}_{i=2} \sum^{p}_{j=1} (-1)^{\zeta(q,{j+p-1}) + \zeta({n-1},i)} (\mu \circ_2 (\gvf \circ_j \psi)) \bullet_0 t^{i-1}(x)
\\
&
\qquad
- \! \sum^{p}_{i=1} \sum^{p}_{j=1} (-1)^{\zeta(q,{j+p-1}) + \zeta({n-1},i)} ((\gvf \circ_j \psi) \bar\circ \mu) \bullet_0 t^{i-1}(x) =: (1) + (2) + (3)
\end{split}
\end{equation}
\end{footnotesize}
holds, where, similarly as in the proof of Theorem \ref{feinefuellhaltertinte} above, the numbers $(1), (2), (3)$ are meant to correspond to the respective three sums in the same order in the equation before; we continue by dealing with three terms.

Since we descended to homology, by means of \rmref{appolloni2} and \rmref{colleoppio}, 
along with \rmref{danton}, \rmref{SchlesischeStr}, and \rmref{lagrandebellezza1}, for any $i, j = 1, \ldots, p$ in the first sum (1), one has:
\begin{footnotesize}
\begin{equation*}
\label{lucca3}
\begin{split}
 (&-1)^{\xi(n,{pq,i}) + \zeta(q,j)} (\mu \circ_1 (\gvf \circ_j \psi)) \bullet_0 t^{i-1}(x) = 
(-1)^{\xi(n,{pq,i}) + \zeta(q,j)} \mu \bullet_0 \big((\gvf \circ_j \psi) \bullet_0 t^{i-1}(x)\big) \\
&= 
(-1)^{\zeta(q,{j+p-1}) + ni} \mu \bullet_0 t \big((\gvf \circ_j \psi) \bullet_0 t^{i-1}(x)\big) 
\\
&
\qquad
+ 
\sum^{n-p-q+1}_{k=1} (-1)^{\xi(n,{pq,i}) + \zeta(q,j) + k-1} \mu \bullet_k \big((\gvf \circ_j \psi) \bullet_0 t^{i-1}(x)\big) \\
&= 
(-1)^{\zeta(q,{j+p-1}) + ni-1} (\mu \circ_2 (\gvf \circ_j \psi)) \bullet_0 t^{i}(x) 
\\
&
\qquad
+ 
\sum^{n-i}_{k=p+q-i} (-1)^{\xi({n-1},{q,i}) + \zeta(q,{j+p-1}) + k-1} (\gvf \circ_j \psi) \bullet_0 t^{i-1}(\mu \bullet_k x) \\
&=: (4) + (5).
\end{split}
\end{equation*}
\end{footnotesize}
By the same arguments, {\em i.e.}, since $x\in H_\bull(\cM)$ and by \rmref{danton}, 
\rmref{SchlesischeStr}, as well as \rmref{lagrandebellezza1}, we continue, using furthermore the cyclicity \rmref{lagrandebellezza2}, by writing
\begin{footnotesize}
\begin{equation*}
\begin{split}
&(5) = 
\sum^{n-i}_{k=p+q-i} (-1)^{\xi({n-1},{q,i}) + \zeta(q,{j+p-1}) + k-1} (\gvf \circ_j \psi) \bullet_0 t^{i-1}(\mu \bullet_k x) \\
&= \sum^{p+q-i-1}_{k=0} (-1)^{\xi({n-1},{q,i}) + \zeta(q,{j+p-1}) + k} (\gvf \circ_j \psi) \bullet_0 t^{i-1}(\mu \bullet_k x) 
\\
&\quad
+ (-1)^{(n-1)i + \zeta(q,{j+p-1})} (\gvf \circ_j \psi) \bullet_0 t^{i-1}(\mu \bullet_0 t(x)) 
\\
&
\qquad
+ \!\! \sum^{n-1}_{k=n-i+1} (-1)^{\xi({n-1},{q,i}) + \zeta(q,{j+p-1}) + k} (\gvf \circ_j \psi) \bullet_0 t^{i-1}(\mu \bullet_k x) 
\\
&= 
\sum^{p+q-2}_{k=i-1} (-1)^{\xi({n},{q,i}) + \zeta(q,{j+p-1}) + k} ((\gvf \circ_j \psi) \circ_{k+1} \mu) \bullet_0 t^{i-1}(x) 
\\
&
\qquad
+ (-1)^{(n-1)i + \zeta(q,{j+p-1})} ((\gvf \circ_j \psi) \circ_i \mu) \bullet_0 t^i(x) 
\\
&\quad  
+ \sum^{n-1}_{k=n-i+1} (-1)^{\xi({n-1},{q,i}) + \zeta(q,{j+p-1}) + k} (\gvf \circ_j \psi) \bullet_0 t^{i-1}t^{n-i+1}(\mu \bullet_{k-n+i-1} t^i(x)) 
\end{split}
\end{equation*}
\begin{equation}
\label{lucca4}
\begin{split}
&= 
\sum^{p+q-1}_{k=i} (-1)^{\xi({n},{q,i}) + \zeta(q,{j+p-1}) + k} ((\gvf \circ_j \psi) \circ_{k} \mu) \bullet_0 t^{i-1}(x) 
\\
&
\qquad
+ \sum^{i}_{k=1} (-1)^{\zeta(q,{j+p-1}) + ni + k} ((\gvf \circ_j \psi) \circ_k \mu) \bullet_0 t^i(x) 
\end{split}
\end{equation}
\end{footnotesize}
for all $i, j = 1, \ldots, p$. Summing over all $i, j$, one sees that 
\begin{footnotesize}
\begin{equation}
\label{lucca5}
\begin{split}
\rmref{lucca4} &= 
\sum^{p}_{i=1} \sum^{p}_{j=1} 
\sum^{p+q-1}_{k=1} (-1)^{\xi({n},{k,i}) + \zeta(q,{j+p-1})} ((\gvf \circ_j \psi) \circ_{k} \mu) \bullet_0 t^{i-1}(x)
\\
&\qquad 
+ \sum^{p}_{j=1} \sum^p_{k=1}(-1)^{\zeta(q,{j+p-1}) + np + k} ((\gvf \circ_j \psi) \circ_k \mu) \bullet_0 t^p(x). 
\end{split}
\end{equation}
\end{footnotesize}
We observe that the first summand in \rmref{lucca5} cancels with (3). 
On the other hand, taking the sum over $i, j$ for the term (4) and adding this to (2) gives:
\begin{footnotesize}
\begin{equation*}
\label{lucca6}
\begin{split}
&\sum^{p}_{i=1} \sum^{p}_{j=1} 
(-1)^{\zeta(q,{j+p-1}) + ni} (\mu \circ_2 (\gvf \circ_j \psi)) \bullet_0 t^{i}(x) + (2) 
\\
&
\qquad
=  
- \sum^{p}_{j=1} (-1)^{\zeta(q,{j+p-1}) + np} (\mu \circ_2 (\gvf \circ_j \psi)) \bullet_0 t^p(x), 
\end{split}
\end{equation*}
\end{footnotesize}
hence, in total, we have proven so far:
\begin{footnotesize}
\begin{equation*}
\label{lucca7}
\begin{split}
(1) + (2) +(3) &= 
\sum^{p}_{j=1} \sum^p_{k=1}(-1)^{\zeta(q,{j+p-1}) + np + k} ((\gvf \circ_j \psi) \circ_k \mu) \bullet_0 t^p(x)
\\
&
\qquad
-
\sum^{p}_{j=1} (-1)^{\zeta(q,{j+p-1}) + np} (\mu \circ_2 (\gvf \circ_j \psi)) \bullet_0 t^p(x), 
\end{split}
\end{equation*}
\end{footnotesize}
where the first sum was a left-over from \rmref{lucca5};
therefore, we need to show now, instead of \rmref{lucca2}, that
\begin{footnotesize}
\begin{equation}
\label{lucca8}
\begin{split}
\!\!\!\!\!\!&\!\!\!\!\!\!\sum^{p}_{i=1} (-1)^{\xi({n},{p,i}) + pq} (\gvf \smallsmile \psi) \bullet_0 t^{i-1}(x) 
-
\sum^{p}_{i=1} (-1)^{\xi({n-q},{p,i}) + \zeta(p,{q-1})} (\gvf \circ_i (\mu \circ_2 \psi)) \bullet_0 t^{i-1}(x)
\\
&= 
\sum^{p}_{j=1} \sum^p_{k=1}(-1)^{\zeta(q,{j+p-1}) + np +k} ((\gvf \circ_j \psi) \circ_k \mu) \bullet_0 t^p(x)
\\
&
\qquad
+
\sum^{p}_{j=1} (-1)^{\zeta(q,{j+p-1}) + np} (\mu \circ_2 (\gvf \circ_j \psi)) \bullet_0 t^p(x) 
\end{split}
\end{equation}
\end{footnotesize}
is true. 
As above, by the fact that $x\in H_\bull(\cM)$ along with Eqs.~\rmref{danton}, \rmref{SchlesischeStr}, \rmref{lagrandebellezza1}, and \rmref{lagrandebellezza2}, we compute for the second summand on the right-hand side in \rmref{lucca8} on homology
\begin{footnotesize}
\begin{equation*}
\label{lucca9}
\begin{split}
&\quad- \sum^{p}_{j=1} (-1)^{\zeta(q,{j+p-1}) + np} (\mu \circ_2 (\gvf \circ_j \psi)) \bullet_0 t^p(x) 
\\
&
= - \sum^{p}_{j=1} (-1)^{\zeta(q,{j+p-1}) + np} \mu \bullet_0 t\big((\gvf \circ_j \psi) \bullet_0 t^{p-1}(x)\big) 
\\
&= \sum^{p}_{j=1} (-1)^{\xi({n},{pq-1,p}) + \zeta(q,{j})} \mu \bullet_0 \big((\gvf \circ_j \psi) \bullet_0 t^{p-1}(x)\big)
\\
&
\quad
+  \sum^{p}_{j=1} \sum^{n-p+q-1}_{k=1} (-1)^{\xi({n},{pq,p}) + \zeta(q,{j-1})+k-1} \mu \bullet_k \big((\gvf \circ_j \psi) \bullet_0 t^{p-1}(x)\big) 
\\
&= \sum^{p}_{j=1} (-1)^{\xi({n},{pq-1,p}) + \zeta(q,{j})} (\mu \circ_1 (\gvf \circ_j \psi)) \bullet_0 t^{p-1}(x)
\\
&
\quad
+  \sum^{p}_{j=1} \sum^{n-p}_{k=q} (-1)^{\xi({n},{pq,p}) + \zeta(q,{j-1}) + k-1} (\gvf \circ_j \psi) \bullet_0 t^{p-1}\big(\mu \bullet_k x\big) 
\\
&=: (6) + (7), 
\end{split}
\end{equation*}
\end{footnotesize}
and further
\begin{footnotesize}
\begin{equation*}
\label{lucca10}
\begin{split}
&(7) = 
\sum^{p}_{j=1} \sum^{q-1}_{k=0} (-1)^{\xi({n},{pq,p}) + \zeta(q,{j-1}) + k} (\gvf \circ_j \psi) \bullet_0 t^{p-1}\big(\mu \bullet_k x\big) 
\\
&\qquad  
+ \sum^{p}_{j=1} \sum^{n-1}_{k=n-p+1} (-1)^{\xi({n},{pq,p}) + \zeta(q,{j-1}) + k} (\gvf \circ_j \psi) \bullet_0 t^{p-1}\big(\mu \bullet_k x\big) 
\\
&\qquad 
+ \sum^{p}_{j=1} (-1)^{\zeta(q,{j-1}) + p(n-q)} (\gvf \circ_j \psi) \bullet_0 t^{p-1}\big(\mu \bullet_0t(x)\big) 
\\
&= \sum^{p}_{j=1} \sum^{q+p-1}_{k=p} (-1)^{\xi({n},{p,k}) + \zeta(q,{j+p-1})} ((\gvf \circ_j \psi) \circ_k \mu) \bullet_0 t^{p-1}(x) 
\\
&\qquad 
+ \sum^{p}_{j=1} \sum^{p}_{k=1} (-1)^{\xi({n},{p-1,k}) + \zeta(q,{j+p-1})} ((\gvf \circ_j \psi) \circ_k \mu) \bullet_0 t^{p}(x) =: (8) + (9). 
\end{split}
\end{equation*}
\end{footnotesize}
One observes that (9) cancels with the first summand in \rmref{lucca8}, whereas, on homology again, and since $\psi$ and $\gvf$ are cocycles, we realise that
\begin{footnotesize}
\begin{equation*}
\label{lucca11}
\begin{split}
&(8) = \sum^{p}_{j=1} \sum^{q}_{k=p-j+1} (-1)^{\zeta(n,p) + q(p-j) + k-1} (\gvf \circ_j (\psi \circ_k \mu)) \bullet_0 t^{p-1}(x) 
\\
&\qquad
+ \sum^{p}_{j=1} \sum^{p}_{k=j+1} (-1)^{\xi({n},{k,p}) + \zeta(q,{j+p})} ((\gvf \circ_k \mu) \circ_j \psi) \bullet_0 t^{p-1}(x)
\\
&
 = \sum^{p}_{j=1} \sum^{p-j}_{k=1} (-1)^{\zeta(n,p) + q(p-j) + k} (\gvf \circ_j (\psi \circ_k \mu)) \bullet_0 t^{p-1}(x) 
\\
&
\qquad
+  \sum^{p}_{j=1} (-1)^{\xi({n},{qj,p}) + \zeta(q,{p})} (\gvf \circ_j (\mu \circ_1 \psi)) \bullet_0 t^{p-1}(x) 
\\
& \qquad 
+  \sum^{p}_{j=1} (-1)^{\zeta(n,{p}) + q(p-j)} (\gvf \circ_j (\mu \circ_2 \psi)) \bullet_0 t^{p-1}(x)
\\
&
\qquad
+  \sum^{p}_{j=1} \sum^{j}_{k=1} (-1)^{\xi({n},{k-1,p}) + \zeta(q,{j+p})} ((\gvf \circ_k \mu) \circ_j \psi) \bullet_0 t^{p-1}(x) 
\\
&
\qquad 
+ \sum^{p}_{j=1} (-1)^{\zeta(n,p) + \zeta(q,{j+p})} ((\mu \circ_1 \gvf) \circ_j \psi) \bullet_0 t^{p-1}(x) 
\\
&
\qquad
+ \sum^{p}_{j=1} (-1)^{\zeta({n-1},{p}) + \zeta(q,{j+p})} ((\mu \circ_2 \gvf) \circ_j \psi) \bullet_0 t^{p-1}(x) 
\\
&=
\sum^{p-1}_{j=1} \sum^{p-1}_{k=j} (-1)^{\xi({n},{k-1,p}) + \zeta(q,{j+p-1})} ((\gvf \circ_j \psi) \circ_k \mu) \bullet_0 t^{p-1}(x) 
\\
&\qquad
+  \sum^{p}_{j=1} (-1)^{\xi({n},{qj,p}) + \zeta(q,{p})} (\gvf \circ_j (\mu \circ_1 \psi)) \bullet_0 t^{p-1}(x) 
\\
&\qquad
+ \sum^{p-1}_{j=1} (-1)^{\zeta(n,{p}) + q(p-j)} (\gvf \circ_j (\mu \circ_2 \psi)) \bullet_0 t^{p-1}(x) 
+ (-1)^{\zeta(n,p)} (\gvf \circ_p (\mu \circ_2 \psi)) \bullet_0 t^{p-1}(x)
\\
&\qquad 
+ \sum^{p}_{j=3} \sum^{j-2}_{k=1} (-1)^{\xi({n},{k-1,p}) + \zeta(q,{j+p})} ((\gvf \circ_k \mu) \circ_j \psi) \bullet_0 t^{p-1}(x) 
\\
&\qquad
- \sum^{p}_{j=1} (-1)^{\xi({n},{qj,p}) + \zeta(q,{p})} (\gvf \circ_j (\mu \circ_1 \psi)) \bullet_0 t^{p-1}(x) 
\\
&\qquad 
+ \sum^{p}_{j=2} (-1)^{\zeta({n},{p}) + q(p-j+1)} (\gvf \circ_{j-1} (\mu \circ_2 \psi)) \bullet_0 t^{p-1}(x) 
\\
&\qquad
+ \sum^{p}_{j=1} (-1)^{\zeta({n},{p}) + \zeta(q,{j+p})} ((\mu \circ_1 \gvf) \circ_j \psi) \bullet_0 t^{p-1}(x) 
\end{split}
\end{equation*}
\begin{equation*}
\begin{split}
&\qquad 
+ \sum^{p}_{j=2} (-1)^{\zeta({n-1},{p}) + \zeta(q,{j+p})} ((\mu \circ_2 \gvf) \circ_j \psi) \bullet_0 t^{p-1}(x)
\\
&\qquad
+  (-1)^{\zeta({n-1},{p}) + \zeta({p-1},{q})} ((\mu \circ_2 \gvf) \circ_1 \psi) \bullet_0 t^{p-1}(x) \\
&=
\sum^{p-1}_{j=1} \sum^{p-1}_{k=1} (-1)^{\xi({n},{k-1,p}) + \zeta(q,{j+p-1})} ((\gvf \circ_j \psi) \circ_k \mu) \bullet_0 t^{p-1}(x) 
\\
&\qquad
+ (-1)^{\zeta(n,p)} (\gvf \circ_p (\mu \circ_2 \psi)) \bullet_0 t^{p-1}(x)
+ \sum^{p}_{j=1} (-1)^{\zeta({n},{p}) + \zeta(q,{j+p})} ((\mu \circ_1 \gvf) \circ_j \psi) \bullet_0 t^{p-1}(x) 
\\
&\qquad 
+ \sum^{p-1}_{j=1} (-1)^{\zeta({n-1},{p}) + \zeta(q,{j+p-1})} (\mu \circ_2 (\gvf \circ_j \psi)) \bullet_0 t^{p-1}(x)
\\
&\qquad
+  (-1)^{n(p-1)+p(q-1)} (\gvf \smallsmile \psi) \bullet_0 t^{p-1}(x) 
\end{split}
\end{equation*}
\end{footnotesize}
holds.
One notices now---taking the definition of the sign functions $\xi$ and $\zeta$ into account---that the third term in the equation above cancels with (6), such that the entire right-hand side of \rmref{lucca8} reads:
\begin{footnotesize}
\begin{equation}
\label{lucca12}
\begin{split}
& \sum^{p}_{j=1} (-1)^{\zeta(q,{j+p-1}) + np-1} (\mu \circ_2 (\gvf \circ_j \psi)) \bullet_0 t^p(x) 
\\
&\qquad
+ \sum^{p}_{j=1} \sum^p_{k=1}(-1)^{\zeta(q,{j+p-1}) + np + k} ((\gvf \circ_j \psi) \circ_k \mu) \bullet_0 t^p(x) \\
&= 
\sum^{p-1}_{j=1} (-1)^{\zeta(q,{j+p-1}) + n(p-1)} (\mu \circ_2 (\gvf \circ_j \psi)) \bullet_0 t^{p-1}(x) 
\\
&
\quad
+
\sum^{p-1}_{j=1} \sum^{p-1}_{k=1} (-1)^{\xi({n},{k-1,p}) + \zeta(q,{j+p-1})} ((\gvf \circ_j \psi) \circ_k \mu) \bullet_0 t^{p-1}(x) 
\\
&\quad 
+ (-1)^{\zeta(n,p)} (\gvf \circ_p (\mu \circ_2 \psi)) \bullet_0 t^{p-1}(x)
+  (-1)^{n(p-1) + p(q-1)} (\gvf \smallsmile \psi) \bullet_0 t^{p-1}(x). 
\end{split}
\end{equation}
\end{footnotesize}
At this point, the first two terms of the right-hand side of \rmref{lucca12} are similar to the right-hand side of \rmref{lucca8}, but with $p-1$ instead of $p$; whereas the third and the fourth term of \rmref{lucca12} are precisely the negative of the summands for $i=p$ of the {\em left}-hand side of \rmref{lucca8}. Hence, applying the same procedure as above another $p-2$ times to the first two terms of the right-hand side of \rmref{lucca12} produces the missing summands for $i=1, \ldots, p-1$ of the left-hand side of \rmref{lucca8}. This proves the theorem.
\end{proof}

\section{The Batalin-Vilkovisky module structure on a cyclic comp module}

The aim of this section is to obtain a sort of homotopy formula that holds for elements in the comp module $\cM$ with actions provided by arbitrary elements in the operad $\cO$ relating the Lie derivative \rmref{messagedenoelauxenfantsdefrance2} 
with the (cyclic) differential \rmref{extra2} and the cap product \rmref{alles4}. 
Since we want to hold this formula on the chain level of the mixed complex $(\cM_\bull, b, B)$, we need more ingredients first.

\subsection{The cyclic cap product}
\label{gallimard}

This subsection contains a  generalisation of the operator $S_\gvf$ that appeared in various, but narrower situations in the literature before \cite{Get:CHFATGMCICH, KowKra:BVSOEAT, NesTsy:OTCROAA, Rin:DFOGCA, Tsy:CH}.
As suggests a formal similarity of the homotopy formula \rmref{sacromonte1} in the next subsection with the identity $[b, \iota_\gvf] = \iota_{\gd\gvf}$ in \rmref{mulhouse2} above, we baptise this operator the {\em cyclic correction} to the cap product $\iota_\gvf$ and the sum $\iota_\gvf + S_\gvf$ the {\em cyclic cap product}. 

\begin{definition}
Let $(\cO, \mu)$ be an operad with multiplication and $\cM$ a \hbox{(para-)}cyclic unital comp module over $\cO$.
For every $\varphi \in  \cO(p)$, define 
$$
        S_\varphi : \cM(n) \rightarrow 
        \cM({n-p+2})
$$
for $0 \leq p \le n$ by
\begin{equation}
\label{capillareal1}
        S_\varphi := 
        \sum^{n-p+1}_{j=1} \, 
        \sum^{n - p+1}_{i=j} (-1)^{ n(j-1) + (p-1)(i-1)} e \bullet_0 \big(\gvf \bullet_i t^{j-1}(x)\big).
\end{equation}
If $p>n$, put
$
        S_\varphi := 0.
$
\end{definition}

\begin{rem}
For the identity $\mathbb{1} \in \cO(1)$, one obtains 
$S_{\mathbb{1}} = \sum^{n-1}_{j=0} (-1)^{jn} e \bullet_0 t^{j}(x)$ using \rmref{SchlesischeStr-1}, which differs from the cyclic coboundary $B$ as defined in \rmref{extra2} only by the last summand $(-1)^n e \bullet_0 t^n(x)$.
\end{rem}

Although looking sufficiently complicated, this operator has at least one nice property when passing to the normalised complex $\bar{\cM}_\bull$ ({\em cf.}~Remark \ref{normalised}):

\begin{lem}
Let $(\cO,\mu)$ be an operad with multiplication and $\cM$ a cyclic unital comp module over $\cO$. 
For any $\gvf \in \bar{\cO}^p$ in the normalised cochain complex 
({\em cf.}~Remark \ref{normalised}), the identity
\begin{equation}
\label{alles3}
[B, S_\varphi] = 0
\end{equation}
holds on the normalised chain complex $\bar{\cM}_\bull$.
\end{lem}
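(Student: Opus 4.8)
The plan is to prove that the two composites $B S_\gvf$ and $S_\gvf B$ vanish \emph{separately} on the normalised complex $\bar{\cM}_\bull$, so that the precise sign in the graded commutator of \rmref{alles3} plays no r\^ole. Everything reduces to three elementary facts valid on $\bar{\cM}_\bull$. Since the degeneracies are $s_j = e \bullet_{j+1}$ for $j = 0, \ldots, n$ (see \rmref{colleoppio}), any term of the form $e \bullet_i w$ with $i \geq 1$ is degenerate and hence vanishes in $\bar{\cM}_\bull$. Applying \rmref{lagrandebellezza1} with $\gvf = e \in \cO(0)$ at index $i = 0$ gives $t(e \bullet_0 w) = e \bullet_1 t(w)$, which is degenerate; as the cyclic operator descends to $\bar{\cM}_\bull$ (being part of the cyclic structure of Proposition~\ref{sabaudia}), this yields $t^k(e \bullet_0 w) = 0$ on $\bar{\cM}_\bull$ for all $k \geq 1$. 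Finally, the $p = 0$ relation \rmref{SchlesischeStr0} gives $e \bullet_0 (e \bullet_0 w) = e \bullet_1 (e \bullet_0 w)$, again degenerate, so that $e \bullet_0 (e \bullet_0 w) = 0$ on $\bar{\cM}_\bull$.

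With these three reductions the vanishing of $B S_\gvf$ is immediate. Writing $B = \sum_{k} (-1)^{kn} e \bullet_0 t^k$ as in \rmref{extra2}, and observing from \rmref{capillareal1} that $S_\gvf x$ is a sum of terms each of the form $e \bullet_0(\,\cdot\,)$, the second fact annihilates every contribution with $k \geq 1$, whereas the surviving term $k = 0$ is $e \bullet_0(S_\gvf x)$, a sum of expressions of the shape $e \bullet_0(e \bullet_0(\dots))$ that vanish by the third fact. Hence $B S_\gvf = 0$ on $\bar{\cM}_\bull$.

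For $S_\gvf B$ I would first feed $Bx = \sum_l (-1)^{ln} e \bullet_0 t^l(x)$ into \rmref{capillareal1}: there the factor $t^{j-1}$ acts on a sum of $e \bullet_0$-terms, so the second fact leaves only the summand $j = 1$, giving $S_\gvf(Bx) \equiv \sum_i (-1)^{(p-1)(i-1)} e \bullet_0(\gvf \bullet_i (Bx))$. The decisive step is then to commute $\gvf \bullet_i$ past the inner $e \bullet_0$ produced by $B$: because $e \in \cO(0)$ has arity $q = 0$ and sits at index $0 < i$, the first case of \rmref{SchlesischeStr} (respectively of \rmref{SchlesischeStr0} when $p = 0$) applies and yields $\gvf \bullet_i (e \bullet_0 z) = e \bullet_0(\gvf \bullet_{i-1} z)$. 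Every summand therefore acquires the form $e \bullet_0(e \bullet_0(\dots))$ and vanishes by the third fact, so $S_\gvf B = 0$ on $\bar{\cM}_\bull$ as well, which establishes \rmref{alles3}.

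Once the three reduction facts are in place the argument is purely formal; the only delicate point is the bookkeeping of index ranges in \rmref{SchlesischeStr}--\rmref{SchlesischeStr0}. One must check that commuting $\gvf \bullet_i$ through $e \bullet_0$ always lands in the first case $j < i$ of these relations, including the extremal index $i = 1$, where the outcome $e \bullet_0(\gvf \bullet_0 z)$ legitimately involves the extra comp module map $\bullet_0$; and one should confirm that the collapse $t^k(e \bullet_0 w) = 0$ is justified by the fact that the whole cyclic, hence simplicial, structure descends to $\bar{\cM}_\bull$.
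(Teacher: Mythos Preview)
Your argument is correct in spirit and uses the same mechanism as the paper: one shows that every term of $[B,S_\varphi]$ lies in the image of a degeneracy. Your packaging into three reusable ``reduction facts'' is clean, and the treatment of $BS_\varphi$ goes through exactly as you describe.

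There is a small gap in your handling of $S_\varphi B$ for $j\ge 2$. You write that $t^{j-1}(Bx)=0$ on $\bar\cM_\bull$ and hence ``only $j=1$ survives'', but this step tacitly assumes that the operators $\varphi\bullet_i$ and $e\bullet_0$ occurring in \rmref{capillareal1} descend to $\bar\cM_\bull$. That is \emph{more} than the cyclic or simplicial structure descending, which is the only justification you offer in the closing paragraph. For $\varphi\bullet_i$ to preserve degenerate chains one actually needs the hypothesis $\varphi\in\bar\cO^p$: if $w=e\bullet_m v$ with $m-p<i\le m$, the middle case of \rmref{SchlesischeStr} produces $(\varphi\circ_{m-i+1}e)\bullet_i v$, which vanishes only because $\varphi$ is normalised. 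A cleaner fix, and in fact what the paper does, is not to quotient prematurely but to observe that $t^{j-1}(e\bullet_0 t^l x)=e\bullet_{j-1}t^{l+j-1}(x)$ by iterating \rmref{lagrandebellezza1}; since in $S_\varphi$ one always has $i\ge j>j-1$, the first case of \rmref{SchlesischeStr} applies uniformly, yielding $\varphi\bullet_i(e\bullet_{j-1}w)=e\bullet_{j-1}(\varphi\bullet_{i-1}w)$, and then $e\bullet_0(e\bullet_{j-1}\,\cdot\,)=e\bullet_j(e\bullet_0\,\cdot\,)$ via \rmref{SchlesischeStr0}. Thus \emph{every} $j$ produces a term in $\mathrm{im}(s_{j-1})$, with no need to invoke descent of $\varphi\bullet_i$. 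Your $j=1$ analysis is then the special case of this uniform computation.
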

\begin{proof}
We need to show that $[B, S_\gvf] \subset {\rm im}(s_j)$ for $j = 0, \ldots, n$, where $s_j = e \bullet_{j+1} - $ are the degeneracies as given in \rmref{colleoppio}. First of all, for $p > n+1$, the entire expression is already zero. Assume therefore that $p \leq n+1$, and introduce the sign function
 \begin{equation}
\label{nerv1}
       \gvt({n,p},{j,i}) := 
        n(j-1) + (p-1)(i-1).
\end{equation} 
 By \rmref{SchlesischeStr} and \rmref{SchlesischeStr0} we have for $x \in \cM_n$, along with the expression \rmref{capillareal1} and \rmref{extra2} for $S_\gvf$ and $B$:
\begin{footnotesize}
\begin{equation*}
\begin{split}
S_\gvf B (x) &=   
        \sum^{n-p+2}_{j=1} \, 
        \sum^{n - p +2}_{i=j} \sum^{n}_{k=0} (-1)^{ \gvt({n,p},{j,i})  + kn} e \bullet_0 \big(\gvf \bullet_i t^{j-1}(e \bullet_0 t^k(x))\big) \\
&= \sum^{n-p+2}_{j=1} \, 
        \sum^{n- p +2}_{i=j} \sum^{n}_{k=0} (-1)^{ \gvt({n,p},{j,i})  + kn} e \bullet_0 \big(e \bullet_{j-1} (\gvf \bullet_{i-1} t^{k+j-1}(x))\big) \\
&= \sum^{n-p+2}_{j=1} \, 
        \sum^{n- p +2}_{i=j} \sum^{n}_{k=0} (-1)^{ \gvt({n,p},{j,i})  + kn} e \bullet_{j} \big(e \bullet_{0} (\gvf \bullet_{i-1} t^{k+j-1}(x))\big), 
\end{split}
\end{equation*}
\end{footnotesize}
and since $j$ is at least equal to $1$, the entire expression lies in the image of one of the $s_j$, for $j = 1, \ldots, n-p +2$. In the same fashion, it is even easier to show that this is true for the first term $B S_\gvf$ in the commutator $[B, S_\gvf]$ as well.   
Hence the lemma is proven.
\end{proof}

\subsection{The Cartan-Rinehart homotopy formula}
\label{lumograph}

With the ingredients of the previous subsection at hand, we can now state the main result of this article:

\begin{theorem}
\label{calleelvira}
Let $(\cO,\mu)$ be an operad with multiplication and $\cM$ a cyclic unital comp module over $\cO$. 
Then,
for any cochain $\varphi \in \bar \cO^\bull$ in the normalised cochain complex $\bar{\cO}^\bull$,
the homotopy formula
\begin{equation}
\label{sacromonte1}
\lie_\varphi = [\BB+\bb, \iota_\varphi + \SSS_\gvf] - \iota_{\gd\varphi} - \SSS_{\gd\varphi},
\end{equation}
or equivalently, in view of \rmref{alles3},
\begin{equation}
\label{sacromonte2}
\lie_\varphi = [\BB, \iota_\varphi] + [\bb, \SSS_\varphi] - \SSS_{\gd\varphi}
\end{equation}
holds on the normalised chain complex $\bar \cM_\bull$.
\end{theorem}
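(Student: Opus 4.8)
Since \rmref{alles3} gives $[\BB,\SSS_\gvf]=0$ and \rmref{mulhouse2} gives $[\bb,\iota_\gvf]=\iota_{\gd\gvf}$, expanding the total commutator bilinearly as $[\BB+\bb,\iota_\gvf+\SSS_\gvf]=[\BB,\iota_\gvf]+[\BB,\SSS_\gvf]+[\bb,\iota_\gvf]+[\bb,\SSS_\gvf]$ shows that \rmref{sacromonte1} collapses to \rmref{sacromonte2} once the two occurrences of $\iota_{\gd\gvf}$ cancel. The plan is therefore to prove the single identity $\lie_\gvf=[\BB,\iota_\gvf]+[\bb,\SSS_\gvf]-\SSS_{\gd\gvf}$ on a generator $x\in\bar\cM(n)$ by direct expansion, using the explicit formulas \rmref{extra2}, \rmref{alles4}, \rmref{appolloni2}, and \rmref{capillareal1}, and substituting $\gd\gvf=\{\mu,\gvf\}$ from \rmref{erfurt} so that $\SSS_{\gd\gvf}$ unfolds into an explicit signed triple sum in the terms $\mu\circ_1\gvf$, $\mu\circ_2\gvf$, and $\gvf\circ_i\mu$.

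Before grinding, I would isolate the recurring collapses that drive the cancellations. On the one hand, $\mu\bullet_0(e\bullet_0 y)=(\mu\circ_1 e)\bullet_0 y=\mathbb{1}\bullet_0 y=y$ by \rmref{distinguished2} and the unitality \rmref{SchlesischeStr-1}, and likewise $(\mu\circ_2\gvf)\bullet_0(e\bullet_0 y)=\gvf\bullet_0 y$ using \rmref{danton}; these are what make $[\bb,\SSS_\gvf]$ and $\iota_\gvf\BB$ telescope. On the other hand, any summand in which an $e$ sits in a slot $\bullet_i$ with $i\ge 1$ lies in the image of the degeneracy $s_{i-1}=e\bullet_i-$ and hence vanishes on the normalised complex; tracking these vanishings, together with $\ttt^{n+1}=\id$ from \rmref{lagrandebellezza2} and the shift rule \rmref{lagrandebellezza1}, is what lets the surviving sums be reindexed.

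Carrying this out, I expect the two sums defining $\lie_\gvf$ in \rmref{messagedenoelauxenfantsdefrance2} to arise from different pieces: applying $\mu\bullet_0(e\bullet_0-)=\id$ to the $j=1$ part of $\bb\,\SSS_\gvf$ reproduces exactly the first sum $\sum_i(-1)^{(p-1)(i-1)}\gvf\bullet_i x$, while $[\BB,\iota_\gvf]$, after the collapse $\iota_\gvf(e\bullet_0 t^i x)=\gvf\bullet_0 t^i x$, yields the cyclic part $\sum_i(-1)^{n(i-1)+p-1}\gvf\bullet_0\ttt^{i-1}(x)$; the remaining summands of $\bb\,\SSS_\gvf$, of $\SSS_\gvf\,\bb$, and of $\SSS_{\gd\gvf}$ must then cancel against one another and against the cross terms of $[\BB,\iota_\gvf]$, in the same spirit as the terms labelled $(1)$--$(15)$ did in the proof of Theorem \rmref{feinefuellhaltertinte}.

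The main obstacle is the sheer volume of sign and index bookkeeping---the same ``double sum yoga'' as in Theorems \rmref{feinefuellhaltertinte} and \rmref{waterbasedvarnish}, now aggravated by the triple sum in $[\bb,\SSS_\gvf]$ and by the three-term expansion of $\SSS_{\gd\gvf}$. I would manage it by splitting every inner sum into the three regimes ($j<i$, $j-p<i\le j$, $i\le j-p$) of the comp relation \rmref{SchlesischeStr}, reindexing each block by a transformation of the type $\sum_j\sum_{i\le f(j)}=\sum_i\sum_{j\ge g(i)}$ as in \rmref{weimar2}, and matching blocks pairwise against $\SSS_{\gd\gvf}$ and $[\BB,\iota_\gvf]$. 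Should the general-$p$ bookkeeping become unwieldy, I would first check the identity for $1$-cochains (recovering the classical Rinehart/Getzler formula) and then induct on $p$, peeling off the $i=p$ summand of each side at every step, in the style of the closing argument of the proof of Theorem \rmref{waterbasedvarnish}.
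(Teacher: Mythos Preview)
Your proposal is correct and takes essentially the same approach as the paper: reduce \rmref{sacromonte1} to \rmref{sacromonte2} via \rmref{alles3} and \rmref{mulhouse2}, then expand all operators explicitly and cancel term by term using the collapses $\mu\bullet_0(e\bullet_0-)=\id$ and $(\mu\circ_2\gvf)\bullet_0(e\bullet_0-)=\gvf\bullet_0-$, the degeneracy vanishing on $\bar\cM_\bull$, and repeated reindexing of the resulting triple sums. Your identification of where the two halves of $\lie_\gvf$ come from---the non-cyclic sum from the $j=1$ piece of $\bb\,\SSS_\gvf$ after the first collapse, and the cyclic sum from $\iota_\gvf\BB$ after the second---matches the paper's labelling $(9)=(1)$ and $(3)=(2)$ exactly; the remaining cancellations are indeed the ``triple sum yoga'' you anticipate, carried out in the paper via terms numbered $(4)$ through $(26)$.
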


\begin{proof}
Also this proof consists basically in writing down all terms using the full battery of results obtained so far, plus some tedious double (triple, actually) sum yoga, and then comparing the sums one by one. First note that when explicitly expressing the graded commutators, Eq.~\rmref{sacromonte2} reads for $\gvf \in \bar{\cO}(p)$:
\begin{equation*}
\begin{split}
[\BB, \iota_\varphi] + [\bb, \SSS_\varphi] - \SSS_{\gd\varphi} 
&= \BB\iota_\varphi + (-1)^{p-1} \iota_\varphi \BB + \bb \SSS_\varphi + (-1)^{p-1} \SSS_\varphi \bb  - \SSS_{\gd\varphi}.
\end{split}
\end{equation*}
The statement in the cases $p > n+1$ and $p = n+1$ follows by definition. 
Recall the definition of the sign functions $\zeta$, $\xi$, and $\gvt$ in \rmref{nerv2} and \rmref{nerv1}, respectively.
For $p < n+1$ and $x \in \bar{\cM}(n)$, we then have:
\begin{footnotesize}
\begin{equation}
\label{calleelvira1}
\lie_\gvf(x) = \sum^{n-p+1}_{i=1} (-1)^{\zeta(p,i)} \gvf \bullet_i x + \sum^p_{i=1} (-1)^{\xi(n,{p,i})} \gvf \bullet_0 t^{i-1}(x) =: (1) + (2).
\end{equation}
\end{footnotesize}
On the other hand, using the properties \rmref{danton} of an operad and those stated in \rmref{distinguished2} of the operad multiplication, along with the properties 
\rmref{SchlesischeStr}--\rmref{lagrandebellezza2} 
of a cyclic unital comp module over an operad, as well as the definitions \rmref{alles4}, \rmref{extra2}, \rmref{appolloni2}, and \rmref{capillareal1} of the respective operators $\iota_\gvf, B, b, S_\gvf$, one computes
\begin{footnotesize}
\begin{equation*}
\begin{split}
\label{calleelvira1a}
&(-1)^{p-1} \iota_\gvf B (x) = \sum^{n}_{j=0} (-1)^{nj+p-1} (\mu \circ_2 \gvf) \bullet_0 (e \bullet_0 t^j(x))  
=
\sum^{n}_{j=0} (-1)^{nj+p-1} (\mu \circ_1 e) \bullet_0 (\gvf \bullet_0 t^j(x)) \\
&
=
\sum^{n}_{j=0} (-1)^{nj+p-1} \gvf \bullet_0 t^j(x) = \sum^{p}_{j=1} (-1)^{\xi(n,{p,j})} \gvf \bullet_0 t^{j-1}(x) + \sum^{n}_{j=p} (-1)^{nj+p-1} \gvf \bullet_0 t^j(x) 
\\
&
=: (3) + (4), 
\end{split}
\end{equation*}
\end{footnotesize}
and one immediately notes that $(2) = (3)$. Furthermore,
\begin{footnotesize}
\begin{equation*}
\begin{split}
\label{calleelvira2}
B \iota_\gvf (x) &= \!
\sum^{n-p}_{j=0} (-1)^{j(n-p)} e \bullet_0 t^j\big(\mu \bullet_0 (\gvf \bullet_1 x)\big)  
= \!\! \sum^{n-p+1}_{j=1} (-1)^{\gvt({n,p-1},{j,j})} e \bullet_0 \big(\mu \bullet_{j-1} (\gvf \bullet_j t^{j-1}(x))\big) 
\\
&
=: (5),  
\end{split}
\end{equation*}
\end{footnotesize}
whereas
\begin{footnotesize}
\begin{equation*}
\begin{split}
\label{calleelvira3}
b S_\gvf (x) &= 
 \sum^{n-p+1}_{k=1} \, \sum^{n-p+1}_{j=1} \, \sum^{n-p+1}_{i=j} 
(-1)^{\gvt({n,p},{j,i})+k} \mu \bullet_k \big(e \bullet_0 (\gvf \bullet_i t^{j-1}(x))\big)  \\
&\quad+
\sum^{n-p+1}_{j=1} \, \sum^{n-p+1}_{i=j} 
(-1)^{\gvt({n,p},{j,i})} \mu \bullet_0 \big(e \bullet_0 (\gvf \bullet_i t^{j-1}(x))\big) 
\\
& 
\quad 
-
\sum^{n-p+1}_{j=1} \, \sum^{n-p+1}_{i=j} 
(-1)^{nj+p(i-1)} \mu \bullet_0 t \big(e \bullet_0 (\gvf \bullet_i t^{j-1}(x))\big) 
\\
& = 
\sum^{n-p+1}_{j=1} \, \sum^{n-p+1}_{i=j} 
(-1)^{\gvt({n,p},{j,i})+j} e \bullet_0 \big(\mu \bullet_{j-1} (\gvf \bullet_i t^{j-1}(x))\big)  
\\
& 
\quad
+ 
\sum^{n-p+1}_{k=1 \atop {k \neq j}} \, \sum^{n-p+1}_{j=1} \, \sum^{n-p+1}_{i=j} 
(-1)^{\gvt({n,p},{j,i})+k} e \bullet_0 \big(\mu \bullet_{k-1} (\gvf \bullet_i t^{j-1}(x))\big)  
\\
& 
\quad
+
\sum^{n-p+1}_{j=2} \, \sum^{n-p+1}_{i=j} 
(-1)^{\gvt({n,p},{j,i})} \gvf \bullet_i t^{j-1}(x)  +
\sum^{n-p+1}_{i=1} 
(-1)^{\zeta(p,i)} \gvf \bullet_i x
\\
& 
\quad 
-
\sum^{n-p+1}_{j=1} \, \sum^{n-p+1}_{i=j} 
(-1)^{nj + (p-1)i} t \big(\gvf \bullet_i t^{j-1}(x)\big) =: (6) + (7) + (8) + (9) + (10),
\end{split}
\end{equation*}
\end{footnotesize}
using \rmref{distinguished2} in the second step, along with the unitality of the comp module. Here, we already observe that $(9) = (1)$.
One moreover has
\begin{footnotesize}
\begin{equation*}
\begin{split}
\label{calleelvira4}
(6) &= 
\sum^{n-p}_{j=1} \, \sum^{n-p+1}_{i=j+1} 
(-1)^{\gvt({n,p},{j,i})+j} e \bullet_0 \big(\mu \bullet_{j-1} (\gvf \bullet_i t^{j-1}(x))\big)  
\\
&\quad
-
\sum^{n-p+1}_{j=1}  
(-1)^{\gvt({n,p-1},{j,j})} e \bullet_0 \big(\mu \bullet_{j-1} (\gvf \bullet_j t^{j-1}(x))\big) 
=: (11) + (12),
\end{split}
\end{equation*}
\end{footnotesize}
where one observes that $(12) = - (5)$ and therefore cancels. Moreover,
\begin{footnotesize}
\begin{equation}
\begin{split}
\label{calleelvira5}
\!\!\!\!\!(10) &= 
\sum^{n-p}_{j=1} \, \sum^{n-p}_{i=j} 
(-1)^{nj+(p-1)i -1} \gvf \bullet_{i+1} t^j(x)  
+
\sum^{n-p+1}_{j=1}  
(-1)^{nj+\zeta(n,p) - 1} t \big(\gvf \bullet_{n - p+1} t^{j-1}(x)\big),
\end{split}
\end{equation}
\end{footnotesize}
where one now sees that the first sum in \rmref{calleelvira5} cancels with $(8)$ and the second one with $(4)$, which is proven by inserting $t^{n+1} = \id$ before the operation $\gvf \bullet_{n-p+1} -$.
We also have
\begin{footnotesize}
\begin{equation*}
\begin{split}
\label{calleelvira6}
(7) &=
\sum^{n-p+1}_{j=2} \, \sum^{n-p+1}_{i=j} \, \sum^{j-1}_{k=1} 
(-1)^{\gvt({n,p},{j,i})+k} e \bullet_0 \big(\mu \bullet_{k-1} (\gvf \bullet_i t^{j-1}(x))\big)  \\
& \quad 
+
\sum^{n-p}_{j=1} \, 
\sum^{n-p+1}_{i=j} \, \sum^{n-p+1}_{k=j+1} 
(-1)^{\gvt({n,p},{j,i})+k} e \bullet_0 \big(\mu \bullet_{k-1} (\gvf \bullet_i t^{j-1}(x))\big)  
=: (13) + (14). 
\end{split}
\end{equation*}
\end{footnotesize}
At this stage of the proof, all terms in \rmref{sacromonte2} cancelled except 
$(11)$, $(13)$ and $(14)$, but the terms $S_\gvf b$ and $S_{\gd \gvf}$ still need to be computed. We therefore continue by
\begin{footnotesize}
\begin{equation*}
\begin{split}
\label{calleelvira7}
(-1)^{p-1} S_\gvf b (x) &= \sum^{n-p}_{j=1} \, \sum^{n-p}_{i=j} 
(-1)^{\zeta(n,j) + \zeta(p,{i-1})} e \bullet_0 \big(\gvf \bullet_i t^{j-1}(\mu \bullet_0 x)\big)  
\\
&
\quad
-
\sum^{n-p}_{j=1} \, \sum^{n-p}_{i=j} 
(-1)^{\zeta(n,{j-1}) + \zeta(p,{i-1})} e \bullet_0 \big(\gvf \bullet_i t^{j-1}(\mu \bullet_0 t(x))\big)  
\\
& 
\quad 
+
\sum^{n-1}_{k=1} \, \sum^{n-p}_{j=1} \, \sum^{n-p}_{i=j} 
(-1)^{\zeta(n,j) + \zeta(p,{i-1}) + k} e \bullet_0 \big(\gvf \bullet_i t^{j-1}(\mu \bullet_k x)\big) 
=: (15) + (16) + (17),
\end{split}
\end{equation*}
\end{footnotesize}
and one sees that
\begin{footnotesize}
\begin{equation*}
\begin{split}
\label{calleelvira8}
(15) = \sum^{n-p}_{j=1} \, \sum^{n-p}_{i=j} 
(-1)^{\zeta(n,j) + \zeta(p,{i-1})} e \bullet_0 \big(\mu \bullet_{j-1} (\gvf \bullet_{i+1} t^{j-1}(x))\big) = - (11), 
\end{split}
\end{equation*}
\end{footnotesize}
whereas
\begin{footnotesize}
\begin{equation*}
\begin{split}
\label{calleelvira9}
(16) + (17) &= 
\sum^{n-p}_{j=1} \, \sum^{n-p}_{i=j} 
(-1)^{\zeta(n,{j-1}) + \zeta(p,{i-1}) +1} e \bullet_0 \big(\gvf \bullet_i (\mu \bullet_{j-1} t^j(x))\big)  
\\
& 
\quad 
+
\sum^{n-p}_{j=1} \, \sum^{n-p}_{i=j} \, \sum^{n-1}_{k=n-j+1} 
(-1)^{\zeta(n,j) + \zeta(p,{i-1}) + k} e \bullet_0 \big(\gvf \bullet_i t^{j-1}t^{n-j+1}(\mu \bullet_{k-n+j-1} t^j(x))\big) 
\\
& 
\quad 
+
\sum^{n-p}_{j=1} \, \sum^{n-p}_{i=j} \, \sum^{n-j}_{k=1} 
(-1)^{\zeta(n,j) + \zeta(p,{i-1}) + k} e \bullet_0 \big(\gvf \bullet_i (\mu \bullet_{k+j-1} t^{j-1}(x))\big)
\\
& = 
\sum^{n-p}_{j=1} \, \sum^{n-p}_{i=j} \, \sum^{j-2}_{k=0} 
(-1)^{\gvt({n,p},{j-1,i-1})+k} e \bullet_0 \big(\gvf \bullet_i (\mu \bullet_{k} t^j(x))\big) 
\\
&
\quad
+
\sum^{n-p}_{j=1} \, \sum^{n-p}_{i=j} \, \sum^{n-1}_{k=j} 
(-1)^{\gvt({n,p},{j,i-1})+k} e \bullet_0 \big(\gvf \bullet_i (\mu \bullet_{k} t^{j-1}(x))\big)
\\
& =: (18) + (19). 
\end{split}
\end{equation*}
\end{footnotesize}
By \rmref{SchlesischeStr} again, the term $(18)$ cancels with $(13)$.
Furthermore, compute with the help of \rmref{erfurt}:
\begin{footnotesize}
\begin{equation*}
\begin{split}
\label{calleelvira10}
- S_{\gd \gvf}(x) &=
- \sum^{n-p}_{j=1} \, \sum^{n-p}_{i=j} 
(-1)^{\gvt({n,p-1},{j,i})} e \bullet_0 \big((\mu \circ_1 \gvf) \bullet_i  t^{j-1}(x)\big)  
\\
&
\quad
+
\sum^{n-p}_{j=1} \, \sum^{n-p}_{i=j} 
(-1)^{\gvt({n,p-1},{j,i-1})} e \bullet_0 \big((\mu \circ_2 \gvf) \bullet_i  t^{j-1}(x)\big)  
\\
& 
\quad 
+
\sum^{p}_{k=1} \, \sum^{n-p}_{j=1} \, \sum^{n-p}_{i=j} 
(-1)^{\gvt({n,p},{j,i-1})+k-1} e \bullet_0 \big((\gvf \circ_k \mu) \bullet_i  t^{j-1}(x)\big)  
=: (20) + (21) + (22). 
\end{split}
\end{equation*}
\end{footnotesize}
To finish the proof of the theorem, we need to show now that this cancels with $(14)$ and $(19)$. In fact, we have
\begin{footnotesize}
\begin{equation*}
\begin{split}
\label{calleelvira11}
(14) &=
\sum^{n-p}_{k=1} \, \sum^{k}_{j=1} \, \sum^{n-p+1}_{i=j}
(-1)^{\gvt({n,p},{j,i})+k-1} e \bullet_0 \big(\mu \bullet_k ( \gvf \bullet_i  t^{j-1}(x))\big)
\\
&= 
\sum^{n-p}_{k=2} \, \sum^{k-1}_{j=1} \, \sum^{k-1}_{i=j}
(-1)^{\gvt({n,p},{j,i})+k-1} e \bullet_0 \big(\mu \bullet_k (\gvf \bullet_i  t^{j-1}(x))\big)
\\ 
&
\quad + 
\sum^{n-p}_{k=1} \, \sum^{k}_{j=1} \, \sum^{n-p+1}_{i=k+2}
(-1)^{\gvt({n,p},{j,i})+k-1} e \bullet_0 \big(\mu \bullet_k (\gvf \bullet_i  t^{j-1}(x))\big)
\\
&
\quad +  
\sum^{n-p}_{k=1} \, \sum^{k}_{j=1} (-1)^{\gvt({n,p-1},{j,k})} e \bullet_0 \big(\mu \bullet_k (\gvf \bullet_k  t^{j-1}(x))\big)
\\
&
\quad
-
\sum^{n-p}_{k=1} \, \sum^{k}_{j=1} 
(-1)^{\gvt({n,p-1},{j,k-1})} e \bullet_0 \big(\mu \bullet_k (\gvf \bullet_{k+1}  t^{j-1}(x))\big)
=: (23) + (24) + (25) + (26).
\end{split}
\end{equation*}
\end{footnotesize}
Here, $(25)$ cancels with $(20)$, and $(26)$ with $(21)$, 
whereas by the aforementioned multiple sum transformations and \rmref{SchlesischeStr} one obtains:
\begin{footnotesize}
\begin{equation}
\begin{split}
\label{calleelvira12}
(23) &=
\sum^{n-p}_{k=2} \, \sum^{k-1}_{i=1} \, \sum^{i}_{j=1}
(-1)^{\gvt({n,p}{j,i})+k-1} e \bullet_0 \big(\gvf \bullet_i (\mu \bullet_{k+p-1}  t^{j-1}(x))\big) \\
& = \sum^{n-p-1}_{i=1} \, \sum^{i}_{j=1} \, \sum^{n-1}_{k=i+p}
(-1)^{\gvt({n,p},{j,i-1})+k} e \bullet_0 \big(\gvf \bullet_i (\mu \bullet_{k}  t^{j-1}(x))\big), 
\end{split}
\end{equation}
\end{footnotesize}
along with
\begin{footnotesize}
\begin{equation}
\begin{split}
\label{calleelvira13}
(24) &=
\sum^{n-p}_{k=1} \, \sum^{k}_{j=1} \, \sum^{n-p}_{i=k+1}
(-1)^{\gvt({n,p},{j,i-1})+k-1} e \bullet_0 \big(\gvf \bullet_i (\mu \bullet_{k}  t^{j-1}(x))\big) \\
& = 
\sum^{n-p-1}_{j=1} \, \sum^{n-p-1}_{k=j} \, \sum^{n-p}_{i=k+1}
(-1)^{\gvt({n,p},{j,i-1})+k-1} e \bullet_0 \big(\gvf \bullet_i (\mu \bullet_{k}  t^{j-1}(x))\big).
\end{split}
\end{equation}
\end{footnotesize}
{\em Finally}, applying the middle line in \rmref{SchlesischeStr} from right to left to $(22)$ and adding this to $(19)$, one concludes that
\begin{footnotesize}
\begin{equation*}
\begin{split}
\label{calleelvira14}
(19) + (22) &=
\sum^{n-p-1}_{j=1} \, \sum^{n-p}_{i=j+1} \, \sum^{i-1}_{k=j}
(-1)^{\gvt({n,p},{j,i-1})+k} e \bullet_0 \big(\gvf \bullet_i (\mu \bullet_{k}  t^{j-1}(x))\big) \\
&
\quad
+ 
\sum^{n-p}_{j=1} \, \sum^{n-p}_{i=j} \, \sum^{n-1}_{k=i+p}
(-1)^{\gvt({n,p},{j,i-1})+k-1} e \bullet_0 \big(\gvf \bullet_i (\mu \bullet_{k}  t^{j-1}(x))\big), 
\end{split}
\end{equation*}
\end{footnotesize}
and whereas the first term cancels (after some triple sum transformation) 
with the remaining term in \rmref{calleelvira13}, the second one cancels with the remaining term in \rmref{calleelvira12}, which finalises the proof of the theorem.
\end{proof}

From the graded Jacobi identity for the graded commutator $[.,.]$, one immediately deduces the useful property:

\begin{corollary}
For any $\gvf \in \bar \cO^\bull$, the identity
\begin{equation}
\label{wajanochnich}
[\lie_\gvf, B] = 0
\end{equation}
holds on $\bar \cM_\bull$.
\end{corollary}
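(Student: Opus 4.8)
The plan is to read the desired identity off the chain-level homotopy formula \rmref{sacromonte2}, i.e.\ $\lie_\gvf = [B,\iota_\gvf] + [b,S_\gvf] - S_{\gd\gvf}$, by applying to it the graded inner derivation $\mathrm{ad}_B := [B,\,\cdot\,]$. That $\mathrm{ad}_B$ is a graded derivation of the commutator $[\,\cdot\,,\,\cdot\,]$ is precisely the graded Jacobi identity, and since $[\lie_\gvf,B]=0$ is equivalent to $[B,\lie_\gvf]=0$ by graded antisymmetry (the sign $(-1)^{|B||\lie_\gvf|}$ being a unit), it suffices to show
\begin{equation*}
[B,\lie_\gvf] = [B,[B,\iota_\gvf]] + [B,[b,S_\gvf]] - [B,S_{\gd\gvf}] = 0
\end{equation*}
on $\bar\cM_\bull$, by treating the three summands separately.

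The last two summands are the routine ones. For the middle term the graded Jacobi identity gives $[B,[b,S_\gvf]] = [[B,b],S_\gvf] - [b,[B,S_\gvf]]$; here $[B,b]=Bb+bB=0$ is the mixed-complex relation defining $(\cM_\bull,b,B)$, while $[B,S_\gvf]=0$ is exactly the content of the Lemma \rmref{alles3}, so both pieces drop out. For the last term I would first observe that $\gd$ restricts to the normalised cochain complex, so that $\gd\gvf \in \bar\cO^{p+1}$ whenever $\gvf \in \bar\cO^p$; then $[B,S_{\gd\gvf}]=0$ follows once more from \rmref{alles3}.

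The one point that requires genuine care---and the step I expect to be the main obstacle---is the first summand $[B,[B,\iota_\gvf]]$. The tempting route is to invoke graded Jacobi for the odd operator $B$ and write $2\,[B,[B,\iota_\gvf]] = \pm[[B,B],\iota_\gvf]$, which vanishes because $[B,B]=2B^2=0$; but this argument isolates $[B,[B,\iota_\gvf]]$ only after inverting $2$, which is illegitimate over the arbitrary commutative ring $k$. I would therefore expand the iterated bracket directly: since $B$ is odd, $[B,[B,\iota_\gvf]]$ equals $B^2\iota_\gvf - \iota_\gvf B^2$ plus two copies of $B\iota_\gvf B$ carrying opposite signs, whence the two middle terms cancel \emph{identically} and the outer terms vanish by $B^2=0$ (again the mixed-complex relation). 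Thus $[B,[B,\iota_\gvf]]=0$ over any $k$, and assembling the three vanishing summands yields $[\lie_\gvf,B]=0$.
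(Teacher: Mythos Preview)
Your proof is correct and follows the same approach the paper sketches: apply the graded Jacobi identity to the homotopy formula \rmref{sacromonte2} together with the already established facts $[B,b]=0$, $B^2=0$, and \rmref{alles3}. Your explicit expansion of $[B,[B,\iota_\gvf]]$ to avoid dividing by $2$ is a sensible refinement over the paper's one-line remark, given that $k$ is an arbitrary commutative ring.
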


On the other hand, passing to homology allows for a more familiar formula:

\begin{corollary}
Let $(\cO,\mu)$ be an operad with multiplication and $\cM$ a cyclic unital comp module over $\cO$. For a cocycle $\gvf \in \bar{\cO}^\bull$ the induced maps
$$
\lie_\gvf: H_\bull(\cM) \to H_{\bull-p+1}(\cM), 
\qquad  \iota_\gvf: H_\bull(\cM) \to H_{\bull-p}(\cM) 
$$
on homology fulfil the simplified Cartan-Rinehart homotopy formula
\begin{equation}
\label{firestarter}
\lie_\gvf = [B, \iota_\gvf].
\end{equation}
\end{corollary}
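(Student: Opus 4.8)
The plan is to read off (\ref{firestarter}) from the chain-level homotopy formula (\ref{sacromonte2}) of Theorem \ref{calleelvira}, specialised to a cocycle and then pushed down to homology. Before doing anything else I would verify that all three operators in (\ref{firestarter}) descend to well-defined maps on $H_\bull(\cM)$, so that the asserted identity even makes sense there. The cap product $\iota_\gvf$ descends by (\ref{mulhouse2}), since $[\bb,\iota_\gvf] = \iota_{\gd\gvf} = 0$ for a cocycle; the Lie derivative $\lie_\gvf$ descends by (\ref{alles1}), since $[\bb,\lie_\gvf] = -\lie_{\gd\gvf} = 0$; and the cyclic differential $\BB$ descends because $(\cM_\bull,\bb,\BB)$ is a mixed complex, whence $\bb\BB + \BB\bb = 0$. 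Thus both sides of (\ref{firestarter}) are meaningful as operators on homology.

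With that in place, the next step is to discard the inhomogeneous terms. Since $\gvf$ is a cocycle we have $\gd\gvf = 0$, hence $\SSS_{\gd\gvf} = \SSS_0 = 0$, and the equivalent form (\ref{sacromonte2}) of the homotopy formula reduces on the normalised complex $\bar\cM_\bull$ to
\begin{equation*}
\lie_\gvf = [\BB, \iota_\gvf] + [\bb, \SSS_\gvf].
\end{equation*}
It therefore only remains to show that the second summand acts as the zero map on $H_\bull(\cM)$.

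Finally, expanding the graded commutator yields $[\bb,\SSS_\gvf] = \bb\,\SSS_\gvf + (-1)^{p-1}\SSS_\gvf\,\bb$. Evaluated on a cycle $x \in \bar\cM(n)$ with $\bb x = 0$, the second term drops out, so $[\bb,\SSS_\gvf](x) = \bb\,\SSS_\gvf(x)$ is a boundary and hence represents $0$ in $H_\bull(\cM)$. Consequently $[\bb,\SSS_\gvf]$ induces the zero map on homology, and the displayed identity collapses to $\lie_\gvf = [\BB,\iota_\gvf]$, as claimed. I do not expect a genuine obstacle here: the entire computational weight has already been carried by Theorem \ref{calleelvira}, and the only point deserving care is precisely the descent of $\iota_\gvf$, $\lie_\gvf$, and $\BB$ to homology together with the observation that the $\bb$-commutator term is a boundary on cycles.
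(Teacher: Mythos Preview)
Your proposal is correct and is precisely the intended argument: the paper states this as an immediate corollary of Theorem~\ref{calleelvira} without spelling out a proof, and your derivation---setting $\SSS_{\gd\gvf}=0$ for a cocycle and observing that $[\bb,\SSS_\gvf]$ is a boundary on cycles---is exactly how one reads \rmref{firestarter} off from \rmref{sacromonte2}.
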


\begin{rem}
\label{cordoli}
This formula is well-known in classical differential geometry between differential forms and vector fields, that is, the classical Cartan homotopy \cite{Car:LTDUGDLEDUEFP}, where $B$ is the de Rham differential, 
and also 
in the context of associative algebras, {\em i.e.}, in the classical cyclic homology theory of algebras, where $B$ is Connes' cyclic coboundary. 
In the context of commutative algebras, 
it appeared for the first time in \cite{Rin:DFOGCA}, in \cite{NesTsy:OTCROAA, Get:CHFATGMCICH} for the noncommutative situation, 
and in special cases such as for $1$-cocycles in \cite{Goo:CHDATFL, Con:NCDG, Xu:NCPA}, see, in particular, the example in \S\ref{castropretorio1} below. 
All these mentioned approaches have recently been generalised to bialgebroids (with some Hopf structure that yields a cyclic operation) in \cite{KowKra:BVSOEAT}, see the example in \S\ref{castropretorio3}.
\end{rem}

\section{Examples}
\label{examples}

\subsection{Noncommutative differential calculi for associative algebras}
\label{castropretorio1}
Let $A$ be an associative $k$-algebra and $V$ be a unital $A$-ring with multiplication denoted by $(v,v') \mapsto v \cdot_\vauu v'$ 
equipped with an $(A,A)$-bimodule map $\gamma:V \to A$ that fulfils $\gamma(v)v' = v \cdot_\vauu v' = v\gamma(v')$, which means that $\gamma$ is in particular a morphism of $A$-rings. 
Define 
$$
\cM := C_\bull(A, A) := A^{\otimes \bull+1} \qquad \mbox{and} 
\qquad  
\cO := C^\bull(A,V) := \Hom_k(A^{\otimes \bull},V).
$$ 
Also set
\begin{footnotesize}
\begin{equation}
\label{maifest}
 ({\gvf} \circ_i {\psi})(a_1, \ldots, a_{p+q-1}) :=  
{\gvf}\big(a_1, \ldots, a_{i-1}, \gamma(\psi(a_{i}, \ldots, a_{i+q-1})), a_{i+q}, \ldots, a_{p+q-1}\big),
\end{equation}
\end{footnotesize}
where $\gvf \in C^p(A,V), \ \psi \in C^q(A,V)$.
Then, it was shown in \cite{Ger:TCSOAAR} that $C^\bull(A,V)$ together with 
\begin{equation}
\label{maifest0}
\mu(a \otimes b) := a1_\vauu b, \qquad \mathbb{1} := \id_\ahha(\cdot)1_\vauu, \qquad e:=1_\vauu, \qquad \mbox{for} \ a, b \in A, 
\end{equation}
forms an operad with multiplication. Furthermore, it is an easy task to check that $C_\bull(A,A)$ becomes a comp module over $C^\bull(A,V)$ by means of the comp module maps
\begin{equation}
\label{maifest1}
\gvf \bullet_i (a_0, \ldots, a_n) := \big(a_0, \ldots, a_{i-1}, \gamma(\gvf(a_{i}, \ldots, a_{i+p-1})), a_{i+p}, \ldots, a_n\big), 
\end{equation}
for $\gvf \in C^p(A,V)$ and $i = 1, \ldots, n-p+1$,
where we denoted elementary tensors in the chain space $C_n(A,A)$ 
by $(a_0, \ldots, a_n)$, as common. 

Equally straightforward is to verify that
$C_\bull(A,A)$
can be made into a cyclic comp module over $C^\bull(A,V)$ via
\begin{equation}
\label{maifest2}
\gvf \bullet_0 (a_0, \ldots, a_n) := \big(\gamma(\gvf(a_{0}, \ldots, a_{p-1})), a_p, \ldots, a_{n} \big), 
\end{equation}
along with the classical cyclic operator (cf.~\cite{Con:NCDG})
\begin{equation}
\label{maifest3}
t(a_0, \ldots, a_n) := (a_n, a_0, \ldots, a_{n-1}).
\end{equation}

One immediately sees that computing the differential $b$ in \rmref{appolloni2} by means of the simplicial pieces \rmref{colleoppio} leads to the standard Hochschild boundary, $\gd$ from \rmref{erfurt} yields the standard Hochschild coboundary multiplied by $(-1)^{p+1}$, whereas the cup product \rmref{cupco} is the opposite of the standard one, and $B$ in \rmref{extra2} is the (Connes-Rinehart-Tsygan) cyclic coboundary.

When passing to the normalised chain and cochain complexes \hbox{$\bar C_n(A,A) := A \otimes \bar A^{\otimes n}$} 
resp.~$\bar C^p(A,V) = \Hom_k(\bar A^{\otimes p}, V)$, 
where $\bar A := A/k1_\ahha$, the resulting noncommutative differential calculus in the sense of the preceding sections on the pair $(H^\bull(A,V), H_\bull(A,A))$ of Hochschild cohomology and homology 
coincides---up to the fact that we allow for more general coefficients $V$ here---with the one introduced in \cite{Rin:DFOGCA, NesTsy:OTCROAA, Tsy:CH, Get:CHFATGMCICH} 
and further discussed in \cite{TamTsy:NCDCHBVAAFC, Tsy:FCFC}, see also the comment at the beginning of \S\ref{gallimard} as well as Remark \ref{cordoli}. The operators $\iota_\gvf$, $S_\gvf$, and $\lie_\gvf$ that result from \rmref{alles4}, \rmref{capillareal1}, and \rmref{messagedenoelauxenfantsdefrance2}, respectively, are those from  \cite{Rin:DFOGCA, NesTsy:OTCROAA, Tsy:CH, Get:CHFATGMCICH} (see also \cite[\S7.2]{KowKra:BVSOEAT} for yet another generalisation): 
abbreviating $x := (a_0, \ldots, a_n)$, one obtains
\begin{footnotesize}
\begin{eqnarray}
\label{dunkeljetze1}
 \iota_{\varphi}(x) 
&\!\!\!\!\!=&\!\!\!\!\! \big(a_0 \gamma({\varphi}(a_1, \ldots, a_p)), a_{p+1}, \ldots,  a_n\big), \\
\nonumber
\SSS_{\varphi}(x) 
&\!\!\!\!\!=&\!\!\!\!\! \sum^{n-p+1}_{j=1} \sum^{n-p+1}_{i=j} (-1)^{n(j-1)+(p-1)(i-1)} 
\big(1_\ahha, a_{n-j+2}, \ldots, a_n, a_0, \ldots, a_{i-1}, \\
\label{dunkeljetze2}       
&& 
\qquad\qquad\qquad
\gamma(\gvf(a_{i}, \ldots, a_{i+p-1})), a_{i+p}, \ldots, a_{n-j+1}\big)
\\
\label{dunkeljetze3}
\lie_{\varphi}(x) 
&\!\!\!\!\!=&\!\!\!\!\! 
\nonumber
\sum^{n-p+1}_{i=1} (-1)^{(p-1)(i-1)}  
\big(a_0, \ldots, a_{i-1}, \gamma(\varphi(a_{i}, \ldots, a_{i+p-1})), a_{i+p}, \ldots, a_n\big) \\
&& \!\!\!\!\!\!\!\!\!\! + \!\!
\sum^{p}_{i=1} (-1)^{n(i-1)+(p-1)}  
\big(\gamma(\varphi(a_{n-i+2}, \ldots, a_n, a_0, \ldots, a_{i+p-2})), a_{i+p-1}, \ldots, a_{n-i+1}\big). 
\end{eqnarray}
\end{footnotesize}
In \S\ref{castropretorio3} in view of Remark \ref{genausowillich} we discuss the possibility to introduce coefficients $W$ even in the chain complex $\bar C_\bull(A,W)$ so as still to obtain a calculus structure on (co)homology.

\subsection{Noncommutative Poisson structures}
\label{castropretorio2}
In a situation similar to the preceding subsection, {\em i.e.}, for the Hochschild chains $\cM := C_\bull(A,A)$ and cochains $\cO := C^\bull(A,V)$, where $V$ is a unital $A$-ring, equipped with the comp maps \rmref{maifest} and the (cyclic) comp module maps \rmref{maifest1}--\rmref{maifest3}, and given a morphism of $A$-rings $\gamma: V \to A$, replace the operad multiplication $\mu$ given in \rmref{maifest0} 
by the $(A,A)$-bimodule structure of $V$ 
by a more general structure $\pi \in C^2(A,V)$ that fulfils 
$$
\pi \circ_1 \pi = \pi \circ_2 \pi,
$$
as in Eq.~\rmref{distinguished1}. 
In case $\pi$ is a Hochschild co{\em cycle} (and $V:=A$ as well as the characteristic of $k$ different from two), such a structure was called a {\em noncommutative Poisson structure} in \cite{Xu:NCPA},
and if it moreover fulfils 
$\pi(1_\ahha,1_\ahha) = 1_\vauu$, it is straightforward to see that \rmref{distinguished2} holds as well 
if we set, as above, $\mathbb{1} = \id_\ahha(\cdot)1_\vauu$ and $e = 1_\vauu$. Hence, $(C^\bull(A,V), \pi)$ becomes an operad with multiplication.

With these assumptions, the simplicial differential in \rmref{appolloni2} then reads
\begin{footnotesize}
\begin{equation*}
\begin{split}
b^\pi  (a_0, \ldots, a_n) &=
\sum^{n-1}_{i=0} (-1)^{i}  
\big(a_0, \ldots, a_{i-1}, \gamma(\pi(a_{i}, a_{i+1})), a_{i+2}, \ldots, a_n\big) 
\\
&\qquad
+ (-1)^{n}  
\big(\gamma(\pi(a_{n}, a_0)), a_1, \ldots, a_{n-1}\big),
\end{split}
\end{equation*}
\end{footnotesize}
which is---again up to the fact that we allow for more general coefficients---the algebraic version, introduced in \cite{NeuPflPosTan:HOFDOPELG}, of the geometric {\em Brylinski boundary} that defines Poisson homology. Its original definition in \cite{Bry:ADCFPM} 
as 
$$
b^\pi := [\iota_\pi, d_{\scriptscriptstyle{\rm deRham}}], 
$$
where $\iota$ is defined as in \rmref{dunkeljetze1}, is nothing than the homotopy formula \rmref{firestarter} combined with \rmref{sachengibts}.
The cosimplicial differential that results from \rmref{erfurt}, on the other hand, is given as
\begin{footnotesize}
\begin{equation*}
\begin{split}
(\gb^\pi \gvf)(a_1, \ldots, a_{p+1}) &=
\pi\big(\gamma(\gvf(a_1, \ldots, a_p)), a_{p+1}\big) 
+ (-1)^{p-1}\pi\big(a_1, \gamma(\gvf(a_2, \ldots, a_{p+1}))\big)
\\ 
&\qquad + \sum^{p}_{i=1} (-1)^{i+p-1}  
\gvf\big(a_1, \ldots, a_{i-1}, \gamma(\pi(a_{i}, \ldots, a_{i+1})), a_{i+2}, \ldots, a_{p+1}\big),  
\end{split}
\end{equation*}
\end{footnotesize}
which is the algebraic version, introduced in \cite{Xu:NCPA}, of the geometric Koszul-Lichnerowicz coboundary \cite{Kos:CDSNEC, Lic:LVDPELADLA} that defines Poisson cohomology.

From \rmref{sachengibts} and \rmref{wajanochnich} follows now that $(C_\bull(A,A), b^\pi, B)$ defines a mixed complex and one could study its cyclic homology as in \cite{Pap:HAAVDP}, or its connection to Koszul operads as in \cite{Fre:TDODKEHDADP}, see also the considerations in \cite{GetKap:COACH}; we will be satisfied by stating 
the remaining maps of the calculus structure introduced in the preceding sections: since we defined the comp module maps and the cyclic operator in \rmref{maifest1}--\rmref{maifest3} as in the example on Hochschild (co)chains, the operators $S_\gvf$ and $\lie_\gvf$ look as in \rmref{dunkeljetze2} resp.~\rmref{dunkeljetze3}. On the other hand, the cup and cap product \rmref{cupco} resp.~\rmref{alles4} assume a different form: 
\begin{footnotesize}
\begin{eqnarray*}
(\gvf \smallsmile \psi)(a_1, \ldots, a_{p+q})  
&\!\!\!\!\!=&\!\!\!\!\! 
\pi\big(\gamma(\psi(a_1, \ldots, a_q)), \gamma(\varphi(a_{q+1}, \ldots, a_{p+q}))\big), \\
\gvf \smallfrown (a_0, \ldots, a_n) &\!\!\!\!\! = &\!\!\!\!\!   
\big(\gamma(\pi(a_0, \gamma(\varphi(a_1, \ldots, a_p)))), a_{p+1}, \ldots, a_{n}\big), 
\end{eqnarray*}
\end{footnotesize}
for $\gvf \in C^p(A,V)$, $\psi \in C^q(A,V)$,
which one may call the {\em Poisson cup product} and {\em Poisson cap product}, respectively.

\subsection{Noncommutative differential calculi for left Hopf algebroids}
\label{castropretorio3}

The main example for cyclic comp modules over an operad 
is the case of a left Hopf algebroid, which even comprises the preceding two examples. 

In \cite{KowKra:BVSOEAT} the Gerstenhaber and Batalin-Vilkovisky structures arising from bialgebroids resp.\ left Hopf algebroids were discussed in detail, and in the following we indicate how these can be recovered from what was proven in the 
previous sections. Moreover, using the results of \cite{Kow:BVASOCAPB}, 
we generalise the noncommutative differential calculus structure by introducing certain coefficients in the operad structure.

\subsubsection{Bialgebroids and left Hopf algebroids}

Since the precise definitions of bialgebroids, left Hopf algebroids, their modules, and comodules are quite technical and would correspondingly fill pages, we will be rather cursory in this subsection and refer for any additional information to the literature cited below, or directly to \cite{KowKra:BVSOEAT, Kow:BVASOCAPB}, where all information and notation needed in what follows is explained in sufficient detail.

In this section, both $U$ and $A$ are unital and associative $k$-algebras, together with a $k$-algebra map $\eta: \Ae \to U$, where $\Ae = A \otimes \Aop$. 
This map induces forgetful functors from the categories $\umod$ and $\modu$ of left resp.\ right $U$-modules to the category $\amoda$ of left $\Ae$-modules. More precisely, introduce the notation
$$
        a \lact n \ract b:=\eta(a \otimes b)n,\quad
        a \blact m \bract b:=m \eta(b \otimes _k a),\quad
        a,b \in A,n \in N,m \in M,
$$
for $N \in \umod$ and $M \in \modu$.
We will frequently use this notation to indicate the relevant $A$-module structures when defining, for example, tensor products. In particular, these forgetful functors can be applied to $U$ itself with respect to left and right multiplication, and this defines two morphisms
\begin{equation*}
\label{basmati}
        s,t : A \rightarrow U,\quad s(a):=\eta (a \otimes 1),\quad
        t(b):=\eta (1 \otimes b), \qquad a, b \in A,
\end{equation*}
the {\em source} resp.\ {\em target} map of the pair $(U,A)$.

Recall from \cite{Schau:DADOQGHA} that a {left Hopf algebroid} generalises the notion of Hopf algebra and consists of a bialgebroid plus a certain (Hopf-)Galois map which is required to be invertible, see below. A {\em bialgebroid} \cite{Tak:GOAOAA}, in turn, is a sextuple $(U,A,s,t,\Delta, \gve)$, abbreviated $(U,A)$, that generalises a bialgebra (the case of which is recovered by taking $A=k$), and which adds to the data $(U,A, s, t)$ introduced above two maps, the {\em coproduct} and {\em counit} 
\begin{equation}
\label{tuscania1}
\Delta: U \to \due U {} \ract  \otimes_\ahha \due U \lact {},  
\qquad \gve: U \to A,
\end{equation}
subject to a couple of identities technically slightly more intricate than the corresponding ones for bialgebras, and for the details of which we refer to, for example, \cite[Def.~3.3]{Boe:HA}.
As common, we introduce a Sweedler presentation for the image of the coproduct, that is, $\gD(u) =: u_{(1)} \otimes_\ahha u_{(2)}$, with summation understood. 

A {\em left Hopf algebroid} now adds to the structure of a bialgebroid the requirement that the Galois map 
$\gb: \due U \blact {} \otimes_\Aopp U_\ract \to \due U {} \ract \otimes_\ahha \due U \lact {}, \ u \otimes_\Aopp v \to u_{(1)} \otimes_\ahha u_{(2)} v$ be invertible, and this allows to define a so called 
{\em translation map}
\begin{equation}
\label{tuscania2}
U \to \due U \blact {} \otimes_\Aopp U_\ract, \quad u \mapsto \beta^{-1}(u \otimes_A 1) := u_+ \otimes_\Aopp u_-,
\end{equation}
for which we again introduced a Sweedler notation as above. An interesting feature in case this extra structure is present is \cite[Lem.~3]{KowKra:DAPIACT} that now the tensor product 
$\due M \blact {} \otimes_\Aopp N_\ract$ 
of $N \in \umod$ with $M \in \modu$ allows for a sort of adjoint action, {\em i.e.}, becomes a right $U$-module again, by means of the action
\begin{equation}
\label{corelli1}
(m \otimes_\Aopp n)u := mu_+ \otimes_\Aopp u_-n, \qquad m \in M, n \in N, u \in U.
\end{equation}
Observe that the induced right $\Ae$-module structure is, by the properties of the translation map \cite[Prop.~3.7]{Schau:DADOQGHA}, forced to be
\begin{equation}
\label{corelli2}
a \blact (m \otimes_\Aopp n) \bract b = m \bract b \otimes_\Aopp a \lact n, \qquad m \in M, \ n \in N, \ a, b \in A. 
\end{equation}


\subsubsection{(Anti) Yetter-Drinfel'd modules and Yetter-Drinfel'd algebras}

Remember \cite{Schau:DADOQGHA} that a {\em Yetter-Drinfel'd module} $N$ for a left bialgebroid $(U,A)$ is both a left $U$-module with action $(u,n) \mapsto un$ and a left $U$-comodule with 
coaction $\gD_\enne: N \to U_\ract \otimes_\ahha N, \ n \mapsto n_{(-1)} \otimes_\ahha n_{(0)}$, 
that obey a certain compatibility condition both for the underlying $\Ae$-module structure arising on $N$ 
from action and coaction by forgetful functors,  
and a compatibility condition between $U$-action 
and $U$-coaction (see, again, \cite{Boe:HA} for technical details, or in the proof of Lemma \ref{altemps} below). 
One can show \cite{Schau:DADOQGHA} that Yetter-Drinfel'd modules form a monoidal category $\yd$ and that this category is equivalent to the weak centre of the category $\umod$, which implies the existence of a braiding $\gs$. A {\em Yetter-Drinfel'd algebra} (see, {\em e.g.}, \cite{BrzMil:BBAD}) is a monoid in $\yd$, and it is called {\em braided commutative} if it is commutative with respect to the braiding $\gs$.

Finally, an {\em anti Yetter-Drinfel'd module} for a left Hopf algebroid $(U,A)$ 
is both a right $U$-module $M$ with action $(m,u) \mapsto mu$ and a left $U$-comodule with coaction $\Delta_\emme(m) := m_{(-1)} \otimes_\ahha m_{(0)}$ such that the underlying 
$\Ae$-module structures of action and coaction coincide and a certain compatibility between action and coaction is fulfilled (see \cite{BoeSte:CCOBAAVC} and Eq.~\rmref{huhomezone2} below). 
Such an anti Yetter-Drinfel'd module is called {\em stable} if $m = m_{(0)} m_{(-1)}$ holds. Observe that the category $\ayd$ of anti Yetter-Drinfel'd modules usually is {\em not} monoidal.

A fact that will be used below is the following:

\begin{lem}
\label{altemps}
For $M \in \ayd$ and $N \in \yd$, the tensor product $M \otimes_\Aopp N$ equipped with the canonical left $U$-coaction 
and the right $U$-action \rmref{corelli1} forms an anti Yetter-Drinfel'd module again.
\end{lem}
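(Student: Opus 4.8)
The plan is to verify directly that the tensor product $M \otimes_\Aopp N$, equipped with the diagonal left $U$-coaction and the right $U$-action given by the adjoint-type formula \rmref{corelli1}, satisfies the axioms defining an anti Yetter-Drinfel'd module over the left Hopf algebroid $(U,A)$. There are three things to check: first, that $M \otimes_\Aopp N$ is a well-defined right $U$-module (this is essentially \cite[Lem.~3]{KowKra:DAPIACT}, cited in the excerpt, so the action \rmref{corelli1} is already known to be well-defined and associative); second, that the underlying $\Ae$-module structures induced by the action and the coaction coincide; and third, the anti-Yetter-Drinfel'd compatibility condition between the $U$-action and the $U$-coaction (the analogue of Eq.~\rmref{huhomezone2}).

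First I would record the diagonal left $U$-coaction on $M \otimes_\Aopp N$. Writing $\Delta_\emme(m) = m_{(-1)} \otimes_\ahha m_{(0)}$ for the coaction on the anti Yetter-Drinfel'd module $M$ and $\gD_\enne(n) = n_{(-1)} \otimes_\ahha n_{(0)}$ for the Yetter-Drinfel'd coaction on $N$, the canonical coaction should read
\begin{equation*}
\Delta(m \otimes_\Aopp n) = m_{(-1)} n_{(-1)} \otimes_\ahha (m_{(0)} \otimes_\Aopp n_{(0)}),
\end{equation*}
with the product $m_{(-1)} n_{(-1)}$ taken in $U$. I would then check that this is well-defined over the balanced tensor product by using the compatibility \rmref{corelli2} of the $\Ae$-structures, and that it is coassociative and counital, inheriting these from the coactions on $M$ and $N$ together with the bialgebroid axioms for $\gD$ and $\gve$. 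The matching of $\Ae$-module structures (axiom two) then follows because, on each factor, action and coaction already induce the same $\Ae$-structure by the Yetter-Drinfel'd resp.\ anti Yetter-Drinfel'd hypotheses, so one only needs to see that the diagonal structures agree, which is again an application of \rmref{corelli2}.

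The main obstacle will be the third axiom, the anti Yetter-Drinfel'd compatibility relating action and coaction. Here one must compute $\Delta\big((m \otimes_\Aopp n)u\big)$ using the adjoint action $(m \otimes_\Aopp n)u = mu_+ \otimes_\Aopp u_- n$ from \rmref{corelli1} and compare it with the expression dictated by the anti Yetter-Drinfel'd axiom, which involves the coproduct $\gD(u) = u_{(1)} \otimes_\ahha u_{(2)}$, the translation map $u \mapsto u_+ \otimes_\Aopp u_-$ from \rmref{tuscania2}, and the individual compatibility conditions on $M$ (anti Yetter-Drinfel'd) and $N$ (Yetter-Drinfel'd). The calculation hinges on the standard identities for the translation map of a left Hopf algebroid (as in \cite[Prop.~3.7]{Schau:DADOQGHA}), in particular the relations governing how $\gD$ interacts with $u_+ \otimes_\Aopp u_-$ and how iterated translations multiply; the bookkeeping of Sweedler indices across the two tensor factors is where the delicacy lies. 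I would carry this out by expanding both sides into $U \otimes_\ahha (M \otimes_\Aopp N)$, substituting the two one-sided compatibility conditions, and then collapsing the resulting translation-map expressions using the Hopf-algebroid identities until the two sides coincide. The claim that the category $\ayd$ is generally not monoidal (noted in the excerpt) is precisely why this works only because $N$ is a genuine Yetter-Drinfel'd module while $M$ is anti Yetter-Drinfel'd: the two compatibility conditions have opposite ``twists'' that combine correctly under the diagonal structure.
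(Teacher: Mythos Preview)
Your overall strategy matches the paper's: write down the diagonal coaction, check the underlying $\Ae$-structures, and then verify the anti Yetter-Drinfel'd compatibility by expanding $\gD\big((m\otimes_\Aopp n)u\big)$ and collapsing translation-map identities from \cite[Prop.~3.7]{Schau:DADOQGHA}. However, there is a genuine error in your coaction. You propose
\[
\Delta(m\otimes_\Aopp n) \;=\; m_{(-1)}n_{(-1)} \otimes_\ahha (m_{(0)}\otimes_\Aopp n_{(0)}),
\]
but the correct canonical coaction has the \emph{opposite} factor ordering in the $U$-slot:
\[
\gD_{\emme\otimes\enne}(m\otimes_\Aopp n) \;=\; n_{(-1)}m_{(-1)} \otimes_\ahha (m_{(0)}\otimes_\Aopp n_{(0)}).
\]
The paper flags this explicitly; it is not a matter of convention but forced by the $\Ae$-module structure \rmref{corelli2} on $M\otimes_\Aopp N$ and by the fact that bialgebroid coactions corestrict to the Sweedler--Takeuchi subspace (so well-definedness over the various balanced tensor products already fails with your ordering).

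The ordering is equally crucial for the compatibility check. With the correct coaction one computes
\[
\gD_{\emme\otimes\enne}\big((m\otimes_\Aopp n)u\big)
= (u_-n)_{(-1)}(mu_+)_{(-1)} \otimes_\ahha (mu_+)_{(0)} \otimes_\Aopp (u_-n)_{(0)},
\]
then inserts the Yetter--Drinfel'd condition \rmref{huhomezone1b} for $N$ and the anti Yetter--Drinfel'd condition \rmref{huhomezone2} for $M$, obtaining a product $\cdots u_{--}u_{+-}\cdots$ sandwiched between $n_{(-1)}$ and $m_{(-1)}$. It is precisely the identity $u_{++}\otimes u_{+-}u_{--}\otimes u_{-+} = u_{+}\otimes 1\otimes u_{-}$ (among the relations in \cite[Prop.~3.7]{Schau:DADOQGHA}) that makes this collapse to $u_- n_{(-1)}m_{(-1)} u_{+(1)} \otimes_\ahha (m_{(0)}\otimes_\Aopp n_{(0)})u_{+(2)}$, which is the anti Yetter--Drinfel'd condition for $M\otimes_\Aopp N$. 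With your ordering the analogous expansion produces $u_{++(1)}u_{-+(1)}$ between $m_{(-1)}$ and $n_{(-1)}$, and there is no translation-map identity that annihilates this; the calculation simply does not close. So your sketch is on the right track, but the single change of ordering is the missing idea without which the argument fails.
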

\begin{proof}
The left $U$-coaction in question is given by 
\begin{eqnarray*}
\gD_{\emme \otimes \enne}: \due M \blact {} \otimes_\Aopp N_\ract
&\to& U_\ract \otimes_\ahha \due {(\due M \blact {} \otimes_\Aopp N_\ract)} {\blact} {} \! , 
\\
 m \otimes_\Aopp n &\mapsto& n_{(-1)}m_{(-1)} \otimes_\ahha m_{(0)} \otimes_\Aopp n_{(0)}, 
\end{eqnarray*}
where the $\Ae$-module structure \rmref{corelli2} is used on $\due M \blact {} \otimes_\Aopp N_\ract$ (observe the opposite factor ordering; that this map is well-defined depends on the fact that bialgebroid coactions corestrict to certain {\em Sweedler-Takeuchi subspaces}, see, {\em e.g.}, \cite[\S2 \& \S3.1]{Kow:BVASOCAPB} for details). In case $U$ is a left Hopf algebroid, the Yetter-Drinfel'd property of $N$ means
\begin{equation}
\label{huhomezone1b}
\gD_\enne(un)
= u_{+(1)} n_{(-1)} u_{-} \otimes_\ahha u_{+(2)} n_{(0)},
\end{equation}
whereas the anti Yetter-Drinfel'd property for $M$ reads 
\begin{equation}
\label{huhomezone2}
		\gD_\emme(mu) 
		  =  u_- m_{(-1)} u_{+(1)} \otimes_\ahha m_{(0)} u_{+(2)},
\end{equation} 
which is what we have to show to hold for $M \otimes_\Aopp N$ as well. Using \rmref{corelli1}, we obtain:
\begin{equation*}
\begin{split}
\gD_{\emme \otimes \enne}&\big((m \otimes_\Aopp n)u\big) 
\\
&
= \gD_{\emme \otimes \enne}(mu_+ \otimes_\Aopp u_-n) 
\\
&= 
(u_-n)_{(-1)}(mu_+)_{(-1)} \otimes_\ahha (mu_+)_{(0)} \otimes_\Aopp (u_-n)_{(0)} 
\\
&= 
u_{-+(1)} n_{(-1)} u_{--} u_{+-}  m_{(-1)} u_{++(1)} \otimes_\ahha m_{(0)} u_{++(2)} \otimes_\Aopp u_{-+(2)} n_{(0)} 
\\
&= 
u_{-(1)} n_{(-1)} m_{(-1)} u_{+(1)} \otimes_\ahha m_{(0)} u_{+(2)} \otimes_\Aopp u_{-(2)} n_{(0)} 
\\
&= 
u_{-} n_{(-1)} m_{(-1)} u_{+(1)} \otimes_\ahha m_{(0)} u_{+(2)+} \otimes_\Aopp u_{+(2)-} n_{(0)} 
\\
&= 
u_{-} n_{(-1)} m_{(-1)} u_{+(1)} \otimes_\ahha (m_{(0)} \otimes_\Aopp n_{(0)})  u_{+(2)},
\end{split}
\end{equation*}
where we used the table of identities given in \cite[Prop.~3.7]{Schau:DADOQGHA} or \cite[Eqs.~(2.4)--(2.12)]{KowKra:BVSOEAT} 
for the translation map \rmref{tuscania2}. 
\end{proof}

\subsubsection{The operad arising from a bialgebroid}

Let $(U,A)$ be a left bialgebroid and let $N$ be a braided commutative Yetter-Drinfel'd algebra with ring structure denoted by $(n, n') \mapsto n \cdot_\enne n'$. 
Set
$$
C^\bull(U,N) := \Hom_\Aopp\big(({\due U \blact \ract}\!)^{\otimes_\Aopp \bull}, N\big).
$$
Then it was shown in \cite[Thm.\ 3.1]{Kow:BVASOCAPB} that $C^\bull(U,N)$ defines an operad with multiplication with respect to the following structure maps (observe that we use a different ordering convention here, which, however, does not change the statement):
define the system of comp maps
$$
        \circ_i : C^p(U,N) \otimes C^q(U,N) \rightarrow 
        C^{p+q-1}(U,N),\quad
        i = 1, \ldots, p,
$$
by
\begin{footnotesize}
\begin{equation}
\label{maxdudler1}
\begin{split}
        & (\varphi \circ_i \psi)(u^1, \ldots, u^{p+q-1}) 
\\ &
\! := \varphi(u^1_{(1)}, \ldots, u^{p-i}_{(1)}, 
[\psi(u^{p-i+1}_{(1)}, \ldots, u^{p+q-i}_{(1)})]_{(-1)} u^{p-i+1}_{(2)} \cdots u^{p+q-i}_{(2)},        
u^{p+q-i+1}, \ldots, u^{p+q-1}) \\
&\hspace*{4cm} \cdot_\enne \big(u^1_{(2)} \cdots u^{p-i}_{(2)} 
[\psi(u^{p-i+1}_{(1)}, \ldots, u^{p+q-i}_{(1)})]_{(0)}
\big), 
\end{split}
\end{equation}
\end{footnotesize}
where we denoted elementary tensors in ${(\due U \blact \ract\!)}^{\otimes_\Aopp k}$ by $(u^1, \ldots, u^k)$.
For zero cochains $n \in N$, this has to be read as $n \circ_i \psi = 0$ for all $i$ and
all $\psi$, whereas 
\begin{footnotesize}
\begin{equation}
\label{maxdudler2}
(\gvf \circ_i n)(u^1, \ldots, u^{p-1}) :=  \varphi(u^1_{(1)}, \ldots, u^{p-i}_{(1)}, 
        n_{(-1)},u^{p-i+1}, \ldots, u^{p-1}) \cdot_\enne (u^1_{(2)} \cdots u^{p-i}_{(2)} n_{(0)})
\end{equation}
\end{footnotesize}
for $i = 1, \ldots, p$, producing an element in $C^{p-1}(U,N)$.
Together with
\begin{equation}
\label{maxdudler3}
        \mu := \varepsilon(m_\uhhu(\cdot,\cdot)) \lact 1_\enne, \quad
\mathbb{1} := \gve(\cdot) \lact 1_\enne, \quad \mbox{and} \quad  
e := 1_\enne,
\end{equation} 
where $m_\uhhu$ is the multiplication map of $U$, Eqs.~\rmref{maxdudler1}--\rmref{maxdudler3}, as mentioned above, 
define an operad with multiplication in the sense of Definition \ref{moleskine}.

\subsubsection{The operad module arising from a bialgebroid}

Now we assume the situation of Lemma \ref{altemps}: let $(U,A)$ be not only a left bialgebroid but rather a left Hopf algebroid and 
$M \in \ayd$. Together with the braided commutative Yetter-Drinfel'd algebra $N \in \yd$ from the preceding subsection, we can form the anti Yetter-Drinfel'd module $\due M \blact {} \otimes_\Aopp N_\ract$. Set
$$
C_\bull(U, M \otimes_\Aopp N) := \due {(\due M \blact {} \otimes_\Aopp N_\ract)} \blact {} \otimes_\Aopp {\due U \blact \ract}^{\otimes_\Aopp \bull},
$$
and if we denote elementary tensors in $C_k(U,M \otimes_\Aopp N)$, analogously to above, by $(m, n, u^1, \ldots, u^k)$, we can define
the comp module maps
\begin{footnotesize}
\begin{equation}
\begin{split}
\label{subiaco1}
\gvf & \bullet_i (m, n, u^1, \ldots, u^k) \\
& :=  \big(m, (u^{1}_{(2)} \cdots u^{k-p-i+1}_{(2)} [\gvf(u^{k-p-i+2}_{(1)}, \ldots, u^{k-i+1}_{(1)})]_{(0)})
 \cdot_\enne n, 
 u^1_{(1)}, \ldots, u^{k-p-i+1}_{(1)}, 
\\
&
\qqquad 
[\gvf(u^{k-p-i+2}_{(1)}, \ldots, u^{k-i+1}_{(1)})]_{(-1)} u^{k-p-i+2}_{(2)} \cdots u^{k-i+1}_{(2)}, u^{k-i+2}, \ldots, u^k \big),
\end{split}
\end{equation}
\end{footnotesize}
for $i=1, \ldots, k-p+1$, 
and any $\gvf \in C^p(U,N)$, where $p > 0$. For $p=0$, that is, elements in $N$, define
\begin{footnotesize}
\begin{equation}
\begin{split}
\label{subiaco2}
n' & \bullet_i (m, n, u^1, \ldots, u^k) 
 \\
 &
 \quad
:=  \big(m, (u^{1}_{(2)} \cdots u^{k-i+1}_{(2)} n'_{(0)})
 \cdot_\enne n, u^1_{(1)}, \ldots, u^{k-i+1}_{(1)}, 
n'_{(-1)}, u^{k-i+2}, \ldots, u^{k} \big),
\end{split}
\end{equation}
\end{footnotesize}
for $i=1, \ldots, k+1$ and $n' \in N$. That \rmref{subiaco1}--\rmref{subiaco2} and also \rmref{maxdudler1}--\rmref{maxdudler2} are well-defined depends on the fact that $N$ is in particular a monoid in $\umod$ (see, in particular, \cite[Eq.~(2.28)]{Kow:BVASOCAPB}), Eq.~\rmref{corelli2}, and the $\Ae$-linearity of the coproduct and the coaction, along with the fact that both coproduct and coaction corestrict, as mentioned before, to a certain 
{\em Sweedler-Takeuchi subspace}, see again \cite[\S2 \& \S3.1]{Kow:BVASOCAPB} for a more detailed explanation.

We are now in a position to state the main result of this subsection:

\begin{theorem}
\label{zanjrevolution}
Let $(U,A)$ be a left Hopf algebroid, $M$ an anti Yetter-Drinfel'd module and $N$ a braided commutative Yetter-Drinfel'd algebra. Then $C_\bull(U,M \otimes_\Aopp N)$ forms a para-cyclic module over the operad $C^\bull(U,N)$ with multiplication with respect to the operations \rmref{subiaco1}--\rmref{subiaco2} 
and the extra comp module map
\begin{footnotesize}
\begin{equation}
\begin{split}
\label{subiaco3}
\gvf & \bullet_0 (m, n, u^1, \ldots, u^k) \\
& :=  \big(m_{(0)}, \big(u^{1}_{+(2)} \cdots u^{k-p+1}_{+(2)} \gvf(u^{k-p+2}_+, \ldots, u^{k}_+, u^k_- \cdots u^1_- n_{(-1)} m_{(-1)})\big)
 \cdot_\enne n_{(0)}, 
\\
&
\qqquad
 u^1_{+(1)}, \ldots, u^{k-p+1}_{+(1)}\big), 
\end{split}
\end{equation}
\end{footnotesize}
where $\gvf \in C^p(U,N)$, along with the cyclic operator
\begin{footnotesize}
\begin{equation}
\label{subiaco4}
t(m,n,u^1, \ldots, u^k) := (m_{(0)} u^1_{++}, u^1_{+-}n_{(0)}, u^2_+, \ldots, u^k_+, u^k_- \cdots u^1_- n_{(-1)} m_{(-1)}).
\end{equation}
\end{footnotesize}
If 
the anti Yetter-Drinfel'd module $M \otimes_\Aopp N$ is stable, then $C_\bull(U,M \otimes_\Aopp N)$ is a cyclic unital module over the operad $C^\bull(U,N)$.
\end{theorem}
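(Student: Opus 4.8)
The content of the statement is precisely that the data \rmref{subiaco1}--\rmref{subiaco4} satisfy the axioms of a (para-)cyclic unital comp module over $C^\bull(U,N)$ in the sense of Definitions \ref{molck} and \ref{piantadiroma}, whence, by Proposition \ref{sabaudia}, one obtains the announced cyclic $k$-module. Since $C^\bull(U,N)$ is already known to be an operad with multiplication by \cite[Thm.~3.1]{Kow:BVASOCAPB}, the entire proof reduces to a (lengthy but structured) verification of identities, which I would organise along the axiom-families of Definition \ref{piantadiroma}: first the comp module relations for $\bullet_i$ with $i\geq 1$, then the extension to $\bullet_0$, then the intertwining \rmref{lagrandebellezza1} with $t$ and unitality, and finally the cyclicity relation \rmref{lagrandebellezza2}.

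First I would check that \rmref{subiaco1}--\rmref{subiaco2} satisfy the comp module relations \rmref{TchlesischeStr}--\rmref{TchlesischeStr0} for $i\geq 1$. Exactly as in Example \ref{schnee1}, the three cases $j<i$, $j-p<i\le j$, and $1\le i\le j-p$ record whether two insertions into the word $(u^1,\dots,u^k)$ are disjoint, overlapping, or nested; here the bookkeeping is heavier because each insertion drags along a Sweedler leg of the coproduct and a braiding contribution into $N$. The relevant inputs are coassociativity of $\gD$, the $\Ae$-linearity and corestriction of action and coaction to the Sweedler-Takeuchi subspace, and the fact that $N$ is a monoid in $\umod$ (compare \cite[Eq.~(2.28)]{Kow:BVASOCAPB}); unitality \rmref{SchlesischeStr-1} for $\mathbb{1}=\gve(\cdot)\lact 1_\enne$ follows directly from the counit axioms.

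Next I would treat the genuinely new pieces, namely the extra comp module map $\bullet_0$ in \rmref{subiaco3} and the cyclic operator $t$ in \rmref{subiaco4}. Both operations feed the last tensor legs back into the coefficient slot through the adjoint right $U$-action \rmref{corelli1}, so one must verify \rmref{SchlesischeStr}--\rmref{SchlesischeStr0} for $i=0$ together with the intertwining $t(\gvf\bullet_i x)=\gvf\bullet_{i+1}t(x)$. These are exactly the identities requiring the full translation-map calculus of \cite[Prop.~3.7]{Schau:DADOQGHA} (the same table already exploited in Lemma \ref{altemps}), the Yetter-Drinfel'd property \rmref{huhomezone1b} of $N$, and the anti Yetter-Drinfel'd property \rmref{huhomezone2} of $M$. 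The cleanest route, I expect, is to observe that $\widetilde M:=M\otimes_\Aopp N$ is itself anti Yetter-Drinfel'd by Lemma \ref{altemps}, so that \rmref{subiaco4} is nothing but the standard para-cyclic operator on the bar-type complex $C_\bull(U,\widetilde M)$, its $M$- and $N$-components arising merely from expanding \rmref{corelli1}; the para-cyclicity of this operator and its compatibility with the faces then reduce to computations already carried out on the (anti-)Yetter-Drinfel'd level.

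Finally, cyclicity \rmref{lagrandebellezza2} is obtained by iterating $t$ exactly $n+1$ times: each application produces one factor of the translation map $u_+\otimes_\Aopp u_-$, and after $n+1$ rotations these telescope, by the translation-map identities, back to the identity on the tensor legs, leaving precisely $(\widetilde m)_{(0)}(\widetilde m)_{(-1)}$ acting on the coefficient $\widetilde m=m\otimes_\Aopp n$. Hence $t^{n+1}=\id$ holds if and only if the stability condition $\widetilde m=(\widetilde m)_{(0)}(\widetilde m)_{(-1)}$ for $M\otimes_\Aopp N$ is satisfied, which is exactly the stated hypothesis. The hardest part will be this last telescoping, in tandem with the $i=0$ instance of \rmref{lagrandebellezza1}: closing the rotation loop means carrying an ever-growing string of Sweedler indices through the Hopf-algebroid identities without error, and it is here---rather than in the purely simplicial comp module relations---that the invertibility of the Galois map (the left Hopf structure) and the stability of $M\otimes_\Aopp N$ are indispensable.
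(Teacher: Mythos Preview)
Your proposal is correct and follows essentially the same route as the paper: a direct axiom-by-axiom verification of \rmref{SchlesischeStr}--\rmref{lagrandebellezza2} for the data \rmref{subiaco1}--\rmref{subiaco4}, with the operad side already handled by \cite[Thm.~3.1]{Kow:BVASOCAPB}. The paper, like you, singles out the $i=0$ case of \rmref{SchlesischeStr} and the $i=0$ case of \rmref{lagrandebellezza1} as the nontrivial computations and carries them out explicitly using the translation-map identities from \cite[Prop.~3.7]{Schau:DADOQGHA} together with the module/comodule-algebra and Yetter-Drinfel'd properties of $N$.

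Two small points of divergence are worth noting. First, you claim the anti Yetter-Drinfel'd property \rmref{huhomezone2} of $M$ enters already in the verification of \rmref{SchlesischeStr} and \rmref{lagrandebellezza1}; in fact it does not, as the paper remarks explicitly after the proof---only the right $U$-module and left $U$-comodule structures on $M$ (with compatible $\Ae$-structures) are used there, and the full anti Yetter-Drinfel'd condition is needed solely for cyclicity via Lemma \ref{altemps}. Second, for \rmref{lagrandebellezza2} the paper does not carry out the telescoping you sketch but instead invokes Lemma \ref{altemps} together with \cite[Thm.~4.1]{KowKra:CSIACT}, which already establishes $t^{n+1}=\id$ for chains with values in a stable anti Yetter-Drinfel'd module. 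Your direct telescoping argument would reproduce that result, so the difference is one of citation versus computation rather than of substance.
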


\begin{proof}
We need to verify that \rmref{SchlesischeStr}--\rmref{SchlesischeStr-1} are true for the operations \rmref{subiaco1}--\rmref{subiaco3}, along with \rmref{lagrandebellezza1}--\rmref{lagrandebellezza2} with respect to the cyclic operator \rmref{subiaco4}. 
As for \rmref{SchlesischeStr}, these identities have to be checked separately for $i = 0$ and $i > 0$ since the respective operations $\bullet_i$ are of different form. 
However, for obvious reasons of space we only show the (more intricate) case for $i=0$ and $0 \leq j < p$, as all other cases can be seen by similar (and similarly tedious, although sometimes easier) computations.
One has for $\gvf \in C^p(U,M)$ and $\psi \in C^q(U,M)$:
\begin{footnotesize}
\begin{equation*}
\begin{split}
& \gvf \bullet_0 (\psi \bullet_j (m,n,u^1, \ldots, u^k)) 
\\
&
= \gvf \bullet_0 \big(m, (u^{1}_{(2)} \cdots u^{k-q-j+1}_{(2)} [\psi(u^{k-q-j+2}_{(1)}, \ldots, u^{k-j+1}_{(1)})]_{(0)})
 \cdot_\enne n, 
 u^1_{(1)}, \ldots, u^{k-q-j+1}_{(1)}, 
\\
&
\qqquad \qqquad \qqquad
[\psi(u^{k-q-j+2}_{(1)}, \ldots, u^{k-j+1}_{(1)})]_{(-1)} u^{k-q-j+2}_{(2)} \cdots u^{k-j+1}_{(2)}, u^{k-j+2}, \ldots, u^k \big)
\\
&
= 
\big(m_{(0)}, \Big(u^{1}_{(1)+(2)} \cdots u^{k-q-p+2}_{(1)+(2)} \gvf\Big(u^{k-q-p+3}_{(1)+}, \ldots, u^{k-q-j+1}_{(1)+}, 
\\
&
\qquad
[\psi(u^{k-q-j+2}_{(1)}, \ldots, u^{k-j+1}_{(1)})]_{(-1)+} u^{k-q-j+2}_{(2)+} \cdots u^{k-j+1}_{(2)+}, u^{k-j+2}_+, \ldots, u^k_+,
\\
&
\qquad
u^k_- \cdots u^{k-j+2}_- u^{k-j+1}_{(2)-} \cdots u^{k-q-j+2}_{(2)-}[\psi(u^{k-q-j+2}_{(1)}, \ldots, u^{k-j+1}_{(1)})]_{(-1)-} u^{k-q-j+1}_{(1)-} \cdots u^1_{(1)-}
\\
&
\qquad
\big[(u^{1}_{(2)} \cdots u^{k-q-j+1}_{(2)} [\psi(u^{k-q-j+2}_{(1)}, \ldots, u^{k-j+1}_{(1)})]_{(0)})
 \cdot_\enne n\big]_{(-1)}
m_{(-1)}\Big)\Big)
\\
&
\qquad
\cdot_\enne \big[(u^{1}_{(2)} \cdots u^{k-q-j+1}_{(2)} [\psi(u^{k-q-j+2}_{(1)}, \ldots, u^{k-j+1}_{(1)})]_{(0)})
 \cdot_\enne n\big]_{(0)}, 
u^{1}_{(1)+(1)}, \ldots, u^{k-q-p+2}_{(1)+(1)}\big)
\\
&
=
\Big(m_{(0)}, \Big(u^{1}_{(1)+(2)} \cdots u^{k-q-p+2}_{(1)+(2)} \gvf\Big(u^{k-q-p+3}_{(1)+}, \ldots, u^{k-q-j+1}_{(1)+}, 
\\
&
\qquad
[\psi(u^{k-q-j+2}_{(1)}, \ldots, u^{k-j+1}_{(1)})]_{(-1)+} u^{k-q-j+2}_{(2)+} \cdots u^{k-j+1}_{(2)+}, u^{k-j+2}_+, \ldots, u^k_+,
\\
&
\qquad
u^k_- \cdots u^{k-j+2}_- u^{k-j+1}_{(2)-} \cdots u^{k-q-j+2}_{(2)-}[\psi(u^{k-q-j+2}_{(1)}, \ldots, u^{k-j+1}_{(1)})]_{(-1)-} u^{k-q-j+1}_{(1)-} \cdots u^1_{(1)-}
\\
&
\qquad
\big[u^{1}_{(2)} \cdots u^{k-q-j+1}_{(2)} [\psi(u^{k-q-j+2}_{(1)}, \ldots, u^{k-j+1}_{(1)})]_{(0)}\big]_{(-1)}
n_{(-1)}
m_{(-1)}\Big)\Big)
\\
&
\qquad
\cdot_\enne \big(\big[u^{1}_{(2)} \cdots u^{k-q-j+1}_{(2)} [\psi(u^{k-q-j+2}_{(1)}, \ldots, u^{k-j+1}_{(1)})]_{(0)}\big]_{(0)}
 \cdot_\enne n_{(0)}\big), 
\\
&
\qquad\qquad\qquad
u^{1}_{(1)+(1)}, \ldots, u^{k-q-p+2}_{(1)+(1)}\Big)
\\
&
=
\Big(m_{(0)}, \Big(u^{1}_{(1)+(2)} \cdots u^{k-q-p+2}_{(1)+(2)} \gvf\Big(u^{k-q-p+3}_{(1)+}, \ldots, u^{k-q-j+1}_{(1)+}, 
\\
&
\qquad
[\psi(u^{k-q-j+2}_{(1)}, \ldots, u^{k-j+1}_{(1)})]_{(-2)+} u^{k-q-j+2}_{(2)+} \cdots u^{k-j+1}_{(2)+}, u^{k-j+2}_+, \ldots, u^k_+,
\\
&
\qquad
u^k_- \cdots u^{k-j+2}_- u^{k-j+1}_{(2)-} \cdots u^{k-q-j+2}_{(2)-}[\psi(u^{k-q-j+2}_{(1)}, \ldots, u^{k-j+1}_{(1)})]_{(-2)-} u^{k-q-j+1}_{(1)-} \cdots u^1_{(1)-}
\\
&
\qquad
u^{1}_{(2)+(1)} \cdots u^{k-q-j+1}_{(2)+(1)} 
[\psi(u^{k-q-j+2}_{(1)}, \ldots, u^{k-j+1}_{(1)})]_{(-1)}
u^{k-q-j+1}_{(2)-} \cdots u^{1}_{(2)-} 
n_{(-1)}
m_{(-1)}\Big)\Big)
\\
&
\quad
\cdot_\enne \big(u^{1}_{(2)+(2)} \cdots u^{k-q-j+1}_{(2)+(2)} [\psi(u^{k-q-j+2}_{(1)}, \ldots, u^{k-j+1}_{(1)})]_{(0)}
 \cdot_\enne n_{(0)}\big),
u^{1}_{(1)+(1)}, \ldots, u^{k-q-p+2}_{(1)+(1)}\Big)
\\
&
=
\Big(m_{(0)}, \Big(u^{1}_{+(2)} \cdots u^{k-q-p+2}_{+(2)} \gvf\Big(u^{k-q-p+3}_{+(1)}, \ldots, u^{k-q-j+1}_{+(1)}, 
\\
&
\qquad
[\psi(u^{k-q-j+2}_{(1)}, \ldots, u^{k-j+1}_{(1)})]_{(-1)} u^{k-q-j+2}_{(2)+} \cdots u^{k-j+1}_{(2)+}, u^{k-j+2}_+, \ldots, u^k_+,
\\
&
\qquad
u^k_- \cdots u^{k-j+2}_- u^{k-j+1}_{(2)-} \cdots u^{k-q-j+2}_{(2)-} u^{k-q-j+1}_{-} \cdots u^{1}_{-} 
n_{(-1)}
m_{(-1)}\Big)\Big)
\\
&
\qquad
\cdot_\enne \big(u^1_{+(3)} \cdots 
u^{k-q-p+2}_{+(3)}  u^{k-q-p+3}_{+(2)}  \cdots
u^{k-q-j+1}_{+(2)} 
\\
&
\qquad \qquad
[\psi(u^{k-q-j+2}_{(1)}, \ldots, u^{k-j+1}_{(1)})]_{(0)}
 \cdot_\enne n_{(0)}\big),
u^{1}_{+(1)}, \ldots, u^{k-q-p+1}_{+(1)}\Big)
\\
&
=
\Big(m_{(0)}, \Big(u^{1}_{+(2)} \cdots u^{k-q-p+2}_{+(2)} \Big(\gvf\big(u^{k-q-p+3}_{+(1)}, \ldots, u^{k-q-j+1}_{+(1)}, 
\\
&
\qquad
[\psi(u^{k-q-j+2}_{+(1)}, \ldots, u^{k-j+1}_{+(1)})]_{(-1)} u^{k-q-j+2}_{+(2)} \cdots u^{k-j+1}_{+(2)}, u^{k-j+2}_+, \ldots, u^k_+,
u^k_- \cdots 
u^{1}_{-} 
n_{(-1)}
m_{(-1)}\big)
\\
&
\qquad
\cdot_\enne \big(u^{k-q-p+3}_{+(2)} \cdots u^{k-q-j+1}_{+(2)}  [\psi(u^{k-q-j+2}_{+(1)}, \ldots, u^{k-j+1}_{+(1)})]_{(0)} \big) \Big)\Big)
 \cdot_\enne n_{(0)},
\\
&
\qquad \qqquad
u^{1}_{+(1)}, \ldots, u^{k-q-p+2}_{+(1)}\Big)
\\
&
=
\Big(m_{(0)}, \Big(u^{1}_{+(2)} \cdots u^{k-q-p+2}_{+(2)} \big((\gvf \circ_{j+1} \psi)\big(u^{k-q-p+3}_{+}, 
\ldots, u^k_+,
u^k_- \cdots 
u^{1}_{-} 
n_{(-1)}
m_{(-1)}\big)\big)\Big)
\\
&
\qquad
\cdot_\enne n_{(0)}, u^{1}_{+(1)}, \ldots, u^{k-q-p+2}_{+(1)}\Big)
\\
&
= (\gvf \circ_{j+1} \psi) \bullet_0 (m, n, u^1, \ldots, u^k).
\end{split}
\end{equation*}
\end{footnotesize}
\nopagebreak
In the first identity we just wrote down the definitions, 
and in all following ones we used the table of properties that hold for the translation map \rmref{tuscania2} and its compatibility with 
the coproduct \rmref{tuscania1}, see \cite[Eqs.~(2.4)--(2.12)]{KowKra:BVSOEAT}, or \cite[Prop.~3.7]{Schau:DADOQGHA} for the original reference. 
In the third identity we moreover used the fact that the Yetter-Drinfel'd algebra $N$ is in particular a comodule algebra, in the fourth identity we used the Yetter-Drinfel'd condition \rmref{huhomezone1b} for $N$, and in the sixth that $N$ is by definition also a module algebra. The seventh identity just uses the definition of the comp maps in \rmref{maxdudler1}, and the last one the definition \rmref{subiaco1} again.

As for proving \rmref{lagrandebellezza1} with respect to the operators \rmref{subiaco1}--\rmref{subiaco4}, the same comments as above hold: 
we only prove the case $i=0$ and $p \leq 1$, the cases for $i \geq 1$ (or $p=0$) being similar but easier. One has:
\begin{footnotesize}
\begin{equation*}
\begin{split}
& t\big(\gvf  \bullet_0 (m, n, u^1, \ldots, u^k)) \\
& =  t\big(m_{(0)}, \big(u^{1}_{+(2)} \cdots u^{k-p+1}_{+(2)} \gvf(u^{k-p+2}_+, \ldots, u^{k}_+, u^k_- \cdots u^1_- n_{(-1)} m_{(-1)})\big)
 \cdot_\enne n_{(0)}, 
 u^1_{+(1)}, \ldots, u^{k-p+1}_{+(1)}\big)
\\
&
=
\big(m_{(0)} u^1_{+(1)++}, 
u^1_{+(1)+-}\big[\big(u^{1}_{+(2)} \cdots u^{k-p+1}_{+(2)} \gvf(u^{k-p+2}_+, \ldots, u^{k}_+, u^k_- \cdots u^1_- n_{(-1)} m_{(-1)})\big)
 \cdot_\enne n_{(0)} \big]_{(0)}, 
\\
&
\qquad
 u^2_{+(1)+}, \ldots, u^{k-p+1}_{+(1)+}, 
\\
&
u^{k-p+1}_{+(1)-} \cdots u^1_{+(1)-} 
\big[\big(u^{1}_{+(2)} \cdots u^{k-p+1}_{+(2)} \gvf(u^{k-p+2}_+, \ldots, u^{k}_+, u^k_- \cdots u^1_- n_{(-1)} m_{(-1)})\big)
 \cdot_\enne n_{(0)} \big]_{(-1)} 
m_{(-1)}
\big)
\\
&
=
\big(m_{(0)} u^1_{+(1)++}, 
u^1_{+(1)+-}\big([u^{1}_{+(2)} \cdots u^{k-p+1}_{+(2)} \gvf(u^{k-p+2}_+, \ldots, u^{k}_+, u^k_- \cdots u^1_- n_{(-2)} m_{(-2)})]_{(0)}
 \cdot_\enne n_{(0)} \big), 
\\
&
\qquad
 u^2_{+(1)+}, \ldots, u^{k-p+1}_{+(1)+}, 
 \\
 &
 \qquad
u^{k-p+1}_{+(1)-} \cdots u^1_{+(1)-} 
[u^{1}_{+(2)} \cdots u^{k-p+1}_{+(2)} \gvf(u^{k-p+2}_+, \ldots, u^{k}_+, u^k_- \cdots u^1_- n_{(-2)} m_{(-2)})]_{(-1)} 
n_{(-1)} m_{(-1)}
\big)
\\
&
=
\big(m_{(0)} u^1_{+(1)++}, 
u^1_{+(1)+-}
\\
&
\qquad
\big(u^{1}_{+(2)+(2)} \cdots u^{k-p+1}_{+(2)+(2)} [\gvf(u^{k-p+2}_+, \ldots, u^{k}_+, u^k_- \cdots u^1_- n_{(-2)} m_{(-2)})]_{(0)}
 \cdot_\enne n_{(0)} \big), 
\\
&
\qquad
 u^2_{+(1)+}, \ldots, u^{k-p+1}_{+(1)+}, u^{k-p+1}_{+(1)-} \cdots u^1_{+(1)-} 
u^{1}_{+(2)+(1)} \cdots u^{k-p+1}_{+(2)+(1)} 
\\
&
\qquad
[\gvf(u^{k-p+2}_+, \ldots, u^{k}_+, u^k_- \cdots u^1_- n_{(-2)} m_{(-2)})]_{(-1)} 
\\
&
\qquad
u^{k-p+1}_{+(2)-} \cdots u^{1}_{+(2)-}
n_{(-1)} m_{(-1)}
\big)
\\
&
=
\big(m_{(0)} u^1_{+++}, 
u^{2}_{++(2)} \cdots u^{k-p+1}_{++(2)} [\gvf(u^{k-p+2}_+, \ldots, u^{k}_+, u^k_- \cdots u^1_- n_{(-2)} m_{(-2)})]_{(0)}
 \cdot_\enne (u^1_{++-} n_{(0)}), 
\\
&
\qquad
 u^2_{++(1)}, \ldots, u^{k-p+1}_{++(1)},
[\gvf(u^{k-p+2}_+, \ldots, u^{k}_+, u^k_- \cdots u^1_- n_{(-2)} m_{(-2)})]_{(-1)} 
\\
&
\qquad
u^{k-p+1}_{+-} \cdots u^{1}_{+-} n_{(-1)} m_{(-1)} 
\big)
\\
&
=
\big(m_{(0)} u^1_{++},
u^{2}_{+(2)} \cdots u^{k-p+1}_{+(2)} [\gvf(u^{k-p+2}_+, \ldots, u^{k}_+, u^k_- \cdots u^{k-p+2}_- u^{k-p+1}_{-(1)} \cdots u^1_{-(1)} n_{(-2)} m_{(-2)})]_{(0)}
\\
& 
\qqquad
 \cdot_\enne (u^1_{+-} n_{(0)}), 
\\
&
\qquad
 u^2_{+(1)}, \ldots, u^{k-p+1}_{+(1)},
[\gvf(u^{k-p+2}_+, \ldots, u^{k}_+, u^k_- \cdots u^{k-p+2}_- u^{k-p+1}_{-(1)} \cdots u^1_{-(1)} n_{(-2)} m_{(-2)})]_{(-1)} 
\\
&
\qquad
u^{k-p+1}_{-(2)} \cdots u^{1}_{-(2)} n_{(-1)} m_{(-1)} 
\big)
\\
&
= \gvf \bullet_1 (m_{(0)} u^1_{++}, u^1_{+-}n_{(0)}, u^2_+, \ldots, u^k_+, u^k_- \cdots u^1_- n_{(-1)} m_{(-1)})
\\
&
= \gvf \bullet_1 t(m, n, u^1, \ldots, u^k).
\end{split}
\end{equation*}
\end{footnotesize}
As before, we used the properties of the translation map (\cite[Eqs.~(2.4)--(2.12)]{KowKra:BVSOEAT}, \cite[Prop.~3.7]{Schau:DADOQGHA}) throughout, the comodule algebra condition for $N$ in the third identity, the Yetter-Drinfel'd condition \rmref{huhomezone1b} in the fourth equality, and the module algebra condition for $N$ in the fifth equality.

Finally, observe that the property \rmref{lagrandebellezza2} for $t$ from \rmref{subiaco4} already follows from Lemma \ref{altemps} in combination with \cite[Thm.~4.1]{KowKra:CSIACT}.
\end{proof}

With the prescriptions from \rmref{subiaco1}--\rmref{subiaco4}, one could now write down the operators $\iota_\gvf$, $\lie_\gvf$, and $S_\gvf$, but except for the case $N:=A$, where these operators were written down in \cite{KowKra:BVSOEAT}, the resulting expressions become considerably complicated and not enormously instructive, in striking contrast to the compact general expressions \rmref{alles4}, \rmref{messagedenoelauxenfantsdefrance2}, and \rmref{capillareal1}. However, using Theorems \ref{feinefuellhaltertinte}, \ref{waterbasedvarnish} \& \ref{calleelvira}, one now has at once:

\begin{corollary}
\label{dieda}
Let $(U,A)$ be a left Hopf algebroid, $M$ an anti Yetter-Drinfel'd module and $N$ a braided commutative Yetter-Drinfel'd algebra, and also let $M \otimes_\Aopp N$ be stable as anti Yetter-Drinfel'd module. 
Then the pair $\big(H^\bull(U,N), H_\bull(U,M \otimes_\Aopp N)\big)$ carries a canonical structure of a noncommutative differential calculus. In particular, if $U_\ract \in \moda$ is projective, then 
$\big(\Ext^\bull_\uhhu(A,N), \Tor^\uhhu_\bull(M,N)\big)$ can be canonically given the structure of a noncommutative differential calculus.
\end{corollary}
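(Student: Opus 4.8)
The plan is to read off the statement as a direct assembly of the structural results established throughout the article, followed by a standard homological identification for the refinement, so that no new calculation in the spirit of the previous sections is required.

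First, under the stability hypothesis on $M \otimes_\Aopp N$, Theorem \ref{zanjrevolution} guarantees that $C_\bull(U, M \otimes_\Aopp N)$ is a \emph{cyclic} unital comp module over the operad with multiplication $C^\bull(U,N)$. From here I would simply invoke the general machinery built up in the earlier sections. Theorem \ref{customerscopy} endows $H^\bull(C^\bull(U,N)) = H^\bull(U,N)$ with the structure of a Gerstenhaber algebra; Proposition \ref{estintore} together with Theorems \ref{feinefuellhaltertinte} and \ref{waterbasedvarnish} shows that the induced cap product $\iota$ and Lie derivative $\lie$ turn $H_\bull(U, M \otimes_\Aopp N)$ into a Gerstenhaber module over it in the sense of Definition \ref{golfoaranci} (ii); and Theorem \ref{calleelvira}, in the form of the homotopy \rmref{firestarter}, promotes this to a Batalin-Vilkovisky module with differential $B$. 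By Definition \ref{golfoaranci} (iv) this is precisely a noncommutative differential calculus, which proves the first assertion.

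For the refinement it remains to identify the operadic (co)homology with the named derived functors. I would argue that the cosimplicial complex underlying $C^\bull(U,N) = \Hom_\Aopp(({\due U \blact \ract})^{\otimes_\Aopp \bull}, N)$ and the simplicial complex underlying $C_\bull(U, M \otimes_\Aopp N)$ both arise from the canonical bar-type resolution of $A$ as a $U$-module. Under the hypothesis that $U_\ract \in \moda$ is projective, this resolution is a projective resolution, so that one obtains isomorphisms $H^\bull(U,N) \cong \Ext^\bull_\uhhu(A,N)$ and $H_\bull(U, M \otimes_\Aopp N) \cong \Tor^\uhhu_\bull(M,N)$; these identifications are exactly those carried out in \cite{KowKra:BVSOEAT, Kow:BVASOCAPB, KowKra:CSIACT}. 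Transporting the calculus structure of the first part along these isomorphisms then yields the second assertion.

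The only genuinely non-formal point, and the sole place where the projectivity hypothesis enters, is this last identification with $\Ext$ and $\Tor$: one must verify that the explicit (co)chain complexes compute the intended derived functors, and in particular that on the chain side the combined coefficient $M \otimes_\Aopp N$ reduces correctly, so that the simplicial homology of $C_\bull(U, M \otimes_\Aopp N)$ agrees with $\Tor^\uhhu_\bull(M,N)$ rather than with a $\Tor$ of the amalgamated module. Everything preceding it is a formal consequence of Theorem \ref{zanjrevolution} together with the general theory of the earlier sections, so I expect the homological bookkeeping around this reduction to be the main (though already documented) obstacle.
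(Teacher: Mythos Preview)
Your proposal is correct and follows essentially the same route as the paper: the corollary is stated immediately after Theorem \ref{zanjrevolution} with the remark that it follows ``at once'' from Theorems \ref{feinefuellhaltertinte}, \ref{waterbasedvarnish} and \ref{calleelvira}, and you have simply spelled out this assembly together with the homological identification (via the cited references) needed for the $\Ext$/$\Tor$ refinement. Your closing caveat about verifying that $H_\bull(U, M \otimes_\Aopp N)$ really computes $\Tor^\uhhu_\bull(M,N)$ rather than a $\Tor$ of the amalgamated coefficient is a fair observation, but the paper treats this as absorbed into the references and does not address it separately.
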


\begin{rem}
One might be tempted to think that the anti Yetter-Drinfel'd module $M \otimes_\Aopp N$ is stable whenever $M$ is; this, however, can be easily proven wrong by considering the case where $U$ is a Hopf {\em algebra} with involutive antipode over a commutative ground ring $k$, where $k$ is considered to be a stable anti Yetter-Drinfel'd module with trivial left coaction and right action given by the counit of the Hopf algebra. On the other hand, observe that to prove the properties \rmref{SchlesischeStr}--\rmref{lagrandebellezza1} 
the fact that $M$ is anti Yetter-Drinfel'd was {\em not} needed.
\end{rem}

\begin{rem}
In case the braided commutative Yetter-Drinfel'd algebra $N$ is given by the base algebra $A$ of $U$ (which is possible for any bialgebroid), the statement of Theorem \ref{zanjrevolution}, that is, the fact that $C_\bull(U,M)$ 
defines a para-cyclic module over the operad $C^\bull(U,A)$ was somewhat scattered through \cite[\S4]{KowKra:BVSOEAT}, especially Lemma 4.7, Lemma 4.11, and Lemma 4.15. This can be seen when using the convention that the extra comp module map $\gvf \bullet_0 -$ in this article corresponds to $\iota_\gvf s_{-1}$ in {\em op.~cit.}~and the standard comp module maps $\bullet_i$ correspond to $\bullet_{n-p-i+2}$, for $i = 1, \ldots, n-p+1$.
\end{rem}

\begin{rem}
Furthermore, in \cite[Rem.~4.8]{KowKra:BVSOEAT} the question arose whether it is possible to express the cap product $\iota_\gvf$ in Lemma 4.4 of {\em op.~cit.}~for bialgebroids by means of the operad multiplication $\mu$ and the comp module maps $\bullet_i$. Equation \rmref{alles4} together with \rmref{maxdudler3}, \rmref{subiaco3}, and \rmref{subiaco4} answers to this problem.
\end{rem}

\begin{rem}
Also, in the spirit of \cite{KowKra:BVSOEAT}, one could renounce on the cyclicity condition \rmref{lagrandebellezza2} and consider, for example, the more general case where $M$ is only a right $U$-module and a left $U$-comodule with coinciding respective 
$\Ae$-module structure, but with no anti Yetter-Drinfel'd condition. In this case, one has to replace the comp module $C_\bull(U,M)$ by its universal cyclic quotient $C_\bull(U,M)/(1 - t^{\bull +1})$ and the operad $C^\bull(U,N)$ by a certain subspace in which $\gvf \bullet_i (\mathrm{im}(1-t^{\bull+1})) \subseteq \mathrm{\im}(\id - t^{\bull+1})$, for all $i=1, \ldots, n-p+1$, to obtain a Batalin-Vilkovisky structure; see again \cite{KowKra:BVSOEAT}, in particular \S2.4 \& Def.~4.12, where this situation is discussed in case $N:=A$. 
\end{rem}

\begin{rem}
\label{genausowillich}
In \cite[\S3 \& \S4]{Kow:BVASOCAPB} it was explained how this example comprises the preceding two of associative algebras and (noncommutative) Poisson algebras by considering the left Hopf algebroid $(\Ae, A)$; in \cite[\S6]{KowKra:BVSOEAT} it was discussed how the general example in this section, by considering the universal enveloping algebra of a Lie-Rinehart algebra and dually its jet space, leads to the classical Cartan calculus of differential forms and vector fields by means of the classical de Rham differential, contraction, and Lie derivative.
\end{rem}

\providecommand{\bysame}{\leavevmode\hbox to3em{\hrulefill}\thinspace}
\providecommand{\MR}{\relax\ifhmode\unskip\space\fi M`R }
\providecommand{\MRhref}[2]{%
  \href{http://www.ams.org/mathscinet-getitem?mr=#1}{#2}
}
\providecommand{\href}[2]{#2}

\end{document}